\documentclass[pdflatex,sn-mathphys-num]{sn-jnl}% Math and Physical Sciences Numbered Reference Style
\usepackage{graphicx}%
\usepackage{multirow}%
\usepackage{amsmath,amssymb,amsfonts}%
\usepackage{amsthm}%
\usepackage{mathrsfs}%
\usepackage[title]{appendix}%
\usepackage{xcolor}%
\usepackage{textcomp}%
\usepackage{manyfoot}%
\usepackage{booktabs}%
\usepackage{algorithm}%
\usepackage{algorithmicx}%
\usepackage{algpseudocode}%
\usepackage{listings}%
\usepackage{enumitem}
\theoremstyle{thmstyleone}%
\newtheorem{Theorem}{Theorem}[section]%  meant for continuous numbers
\newtheorem{Proposition}{Proposition}[section]%
\newtheorem{Corollary}{Corollary}[section]

\newtheorem{Lemma}{Lemma}[section]

\theoremstyle{thmstyletwo}%
\newtheorem{Remark}{\bfseries Remark}[section]

\theoremstyle{thmstylethree}%

\def\theequation{\thesection.\arabic{equation}}
\makeatletter
   \@addtoreset{equation}{section}
\makeatother
\usepackage{geometry}
\begin{document}

%% 开启行号功能
%\linenumbers

\title[]{Global spherically  symmetric solutions to the multidimensional isentropic compressible Navier--Stokes--Korteweg system with large initial data}

%%=============================================================%%
%% GivenName	-> \fnm{Joergen W.}
%% Particle	-> \spfx{van der} -> surname prefix
%% FamilyName	-> \sur{Ploeg}
%% Suffix	-> \sfx{IV}
%% \author*[1,2]{\fnm{Joergen W.} \spfx{van der} \sur{Ploeg}
%%  \sfx{IV}}\email{iauthor@gmail.com}
%%=============================================================%%

\author[1]{\fnm{Zhengzheng } \sur{Chen}}

\author[2]{\fnm{Fanfan} \sur{ Jiang}}

%\affil[1]{\orgdiv{School of Mathematical Sciences}, \orgname{Anhui University}, \orgaddress{\street{}\city{Hefei}, \postcode{230601}, \state{}\country{China}}}

\affil[1, 2]{\orgdiv{School of Mathematical Sciences}, \orgname{Anhui University}, \orgaddress{\street{}\city{Hefei}, \postcode{230601}, \state{}\country{China}}}

%\affil[2]{\orgdiv{Department}, \orgname{Organization}, \orgaddress{\street{Street}, \city{City}, \postcode{10587}, \state{State}, \country{Country}}}
%
%\affil[3]{\orgdiv{Department}, \orgname{Organization}, \orgaddress{\street{Street}, \city{City}, \postcode{610101}, \state{State}, \country{Country}}}

%%==================================%%
%% Sample for unstructured abstract %%
%%==================================%%

\abstract{\unboldmath In this paper, we investigate the global existence of spherically symmetric strong solutions with large initial data to an initial-boundary value problem of the multidimensional isentropic compressible Navier-Stokes-Korteweg system in an unbounded exterior domain. We consider the case when the pressure $p(\rho)=\rho^\gamma$, the viscosity coefficients $\mu(\rho)$  and $ \lambda(\rho)$ satisfy either $\mu(\rho)=\tilde{\mu}, \lambda(\rho)=\tilde{\lambda}\rho^\alpha$ or $\mu(\rho)=\tilde{\mu}\rho^\alpha, \lambda(\rho)=\tilde{\lambda}\rho^\alpha$,  and the capillarity coefficient $\kappa(\rho)=\tilde{\kappa}\rho^\beta$, where  $\alpha,\beta,\gamma \in \mathbb{R}$ are parameters, and $\tilde{\mu},\tilde{\lambda},\tilde{\kappa}$ are given real constants. Under suitable restrictions on the  parameters $\alpha,\beta$ and $\gamma$, we establish the global existence and uniqueness of spherically symmetric strong solutions. The proof relies on the radically weighted energy method combined with the technique developed by Y.~Kanel'\cite{28}.
}

\keywords{Compressible Navier-Stokes-Korteweg system; Spherically symmetric solutions; Global existence; Large initial  perturbation; Exterior domain.}
\pacs[2000 AMS Subject Classifications:]{35Q30, 76N10, 35B40.}
%%\pacs[JEL Classification]{D8, H51}

\maketitle

\section{Introduction }
In this article, we are concerned with the global existence of spherically symmetric solutions to the following initial-boundary value problem of the multidimensional isentropic compressible Navier-Stokes-Korteweg system in the unbounded exterior domain $\Omega:= \{\xi\in\mathbb{R}^d: |\xi| \ge a\}$, where $\xi$ is the spatial variable with space dimension $d\geq2$, and $a$ is given positive constant. The compressible Navier-Stokes-Korteweg system modeling the motions of the viscous  compressible fluids with internal capillarity, can be written in the Eulerian coordinates as
\begin{eqnarray}\label{1.1}
\left\{\begin{array}{ll}
          \partial_t \rho + \text{div}(\rho \mathbf{u}) = 0,\\[2mm]
          \partial_t (\rho \mathbf{u}) + \text{div}(\rho \mathbf{u} \otimes \mathbf{u}) + \nabla p(\rho) = \text{div}(\mathbb{T}+\mathbb{K}),
\end{array}\right.
\end{eqnarray}
where the stress tensor $\mathbb{T}$ and the capillary tensor $\mathbb{K}$ are given by
\begin{equation}\label{1.2}
\begin{split}
          \mathbb{T}&=\mu(\rho)\left(\nabla\mathbf{u}+(\nabla\mathbf{u})^T\right)+\lambda(\rho)\text{div}\mathbf{u}\cdot \mathbb{I},\\[1.5mm]
          \text{div}\mathbb{K}&=\nabla\left(\rho \kappa(\rho)\Delta\rho+\frac{\kappa(\rho)+\rho \kappa'(\rho)}{2}|\nabla\rho|^2\right)-\text{div}\left(\kappa(\rho)\nabla\rho\otimes\nabla\rho\right),
\end{split}
\end{equation}
with $(\nabla\mathbf{u})^T$ and $\mathbb{I}$ being  the transpose matrix of $\nabla\mathbf{u}$,  and  the $d\times d$ identity matrix, respectively. Here the primary dependent variables are the density $\rho(t,\xi)>0$, and the velocity field $\mathbf{u}(t,\xi)\in\mathbb{R}_d$.

We shall consider the initial boundary value problem of system (\ref{1.1}) in the region $(0,\infty)\times\Omega$ with the following initial and boundary conditions:
\begin{eqnarray}\label{1.3}
\left\{\begin{array}{ll}
          (\rho,\boldsymbol{u})\big|_{t=0}=(\rho_0,\boldsymbol{u}_0)(\xi), \,\,\,\,&\xi\in\overline{\Omega},\\[2mm]
          (\partial_{\boldsymbol{n}}\rho,\boldsymbol{u})\big|_{\partial\Omega}=0, \,\,\,\,&t\ge0,
\end{array}\right.
\end{eqnarray}
here $\boldsymbol{n}$ denotes the unit outer normal to $\partial\Omega$ , and we suppose that the boundary conditions are compatible with the initial data.

The viscosity coefficients $\mu(\rho), \lambda(\rho)$ and the thermal conductivity coefficient $\kappa(\rho)$ are prescribed through constitutive relations as functions of the density satisfying
\begin{eqnarray}\label{1.4}
\mu(\rho)>0,~~~~~~\kappa(\rho)>0,~~~~~~2\mu(\rho)+d\lambda(\rho)>0.
\end{eqnarray}
Furthermore, we consider in this paper the pressure function $p(\rho)$, the viscosity coefficients $\mu(\rho)$ and $\lambda(\rho)$, and the thermal conductivity coefficient $\kappa(\rho)$ satisfy one of the following two conditions:
\begin{eqnarray}\label{1.5}
&&p(\rho)=\rho^\gamma,\quad \mu(\rho)=\widetilde{\mu},\quad \lambda(\rho)=\widetilde{\lambda}\rho^\alpha,\quad \kappa(\rho)=\widetilde{\kappa}\rho^\beta,\\[2mm]
&&p(\rho)=\rho^\gamma, \quad \mu(\rho)=\widetilde{\mu}\rho^\alpha,\quad \lambda(\rho)=\widetilde{\lambda}\rho^\alpha,\quad \kappa(\rho)=\widetilde{\kappa}\rho^\beta,\label{1.6}
\end{eqnarray}
where and $\gamma\geq1, \alpha, \beta\in\mathbb{R}$ are parameters, and $\widetilde{\mu}, \widetilde{\lambda}, \widetilde{\kappa}$ are constants,
\begin{Remark}
In particular, if the viscosity coefficients $\mu(\rho)$ and $\lambda(\rho)$ are given by
$\mu(\rho)=\rho^\alpha$ and $\lambda(\rho)=2(\alpha-1)\rho^\alpha$
they satisfy the Bresch--Desjardins (BD) constraint, which further yields the associated BD entropy estimate. This additional entropy structure plays a crucial role in deriving uniform a priori estimates, especially in the presence of large initial data and possible vacuum states.
\end{Remark}
The compressible Navier--Stokes--Korteweg system describes the motion of viscous compressible fluids with internal capillarity. It originates from the pioneering works of Van der Waals and Korteweg \cite{1,2}, and was rigorously derived by Dunn and Serrin within the framework of second-gradient theory \cite{3}. Compared with the classical compressible Navier--Stokes equations, this system involves additional analytical difficulties due to the highly nonlinear terms induced by the Korteweg stress tensor. In particular, the momentum equation contains third-order derivatives of the density, and, in the non-isentropic case, also second-order derivatives of the temperature. Meanwhile, the system has important applications in phase transitions, contact angle problems, and quantum fluid models \cite{4,5}.

In recent years, significant progress has been made in the mathematical analysis of the compressible Navier--Stokes--Korteweg system. For small initial data, Danchin and Desjardins \cite{6} established the global existence and uniqueness of strong solutions in critical spaces for the $n(\geq 2)$-dimensional isentropic model, and obtained global weak solutions in two dimensions. Haspot \cite{7,8} extended these results to the non-isentropic case, while Hattori and Li \cite{9,10} proved the global existence of smooth solutions in Sobolev spaces. The time-decay behavior was further studied in \cite{12,13}, and the case with external forces was considered in \cite{14,15,16}. The nonlinear stability of basic wave patterns can be found in \cite{17,18,19}. For large initial data, Bresch, Desjardins, and Lin \cite{5} obtained global weak solutions in the three-dimensional periodic case. Tsyganov \cite{20}, Charve and Haspot \cite{21}, and Germain and LeFloch \cite{22} investigated one-dimensional models under various assumptions on the coefficients. More recently, Chen, Zhao, and collaborators \cite{23,24,25,26} studied the global existence and large-time behavior of strong solutions in one dimension with density- or temperature-dependent coefficients.

To the best of our knowledge, the study of spherically symmetric solutions to the compressible Navier--Stokes--Korteweg system remains open. The main purpose of this paper is to address this problem by establishing the global existence of such solutions with large initial data in multidimensional exterior domains.

However, for the classical compressible Navier--Stokes equations, spherically or cylindrically symmetric solutions have been extensively studied. Under spherical symmetry, Nikolaev \cite{29} established the global-in-time existence of (generalized) solutions in a bounded annular domain, while Chen and Kratka \cite{30} investigated the flow between a stationary solid core and a free boundary connecting to the surrounding vacuum. In the cylindrically symmetric case, Frid and Shelukhin \cite{31} obtained global solvability for large initial data in a bounded annular domain. Subsequently, Hoff and Jenssen \cite{32} proved the global existence of weak solutions with large discontinuous initial data in both spherically and cylindrically symmetric settings. Moreover, the arguments in \cite{41,31} can be extended to the case of constant viscosity and temperature-dependent heat conductivity, see \cite{33,34,35,36}. For one-dimensional flows (or equivalently spherically or cylindrically symmetric flows) in bounded domains, it has been shown that global solutions converge exponentially to a constant state as time tends to infinity; see \cite{37,38,39,40}. It is worth emphasizing that the boundedness of the domain plays a crucial role in these results. The system satisfying \eqref{1.18} was first proposed by Vaigant and Kazhikhov \cite{41}, who established the global well-posedness of classical solutions to the two-dimensional periodic problem with non-vacuum and arbitrarily large initial data. In the multidimensional case, a remarkable framework was developed through a series of works by Bresch and Desjardins \cite{42,43}, initiated with Lin \cite{5} in the context of the Navier--Stokes--Korteweg system with linear shear viscosity. This framework provides additional information on the gradient of a function of $\rho$ when the viscosity coefficients satisfy the so-called Bresch--Desjardins entropy structure. In the exterior domain of a ball, Cao, Li, and Zhu \cite{46} proved the global existence of three-dimensional spherically symmetric regular solutions with large initial data and far-field vacuum by exploiting the BD entropy estimate.

Motivated by the above considerations, in this paper we study the multidimensional isentropic compressible Navier--Stokes--Korteweg system with large initial data of spherical symmetry . The main difficulties of this work is the derivation of uniform upper and lower bounds of the density, and the treatment of nonlinear high-order terms. To overcome these issues, we develop a series of a priori estimates induced by symmetry.

We are concerned with the global existence of spherically symmetric large solutions to the problem \eqref{1.1}--\eqref{1.3} in the unbounded exterior domain. Supplemented with spherically symmetric initial data, the solution $(\rho, \mathbf{u})$ to problem (\ref{1.1})--(\ref{1.3}) is also spherically symmetric, i.e.
\begin{eqnarray}\label{1.7}
(\rho(t, \xi), \mathbf{u}(t, \xi)) = \left( \hat{\rho}(t, r), \frac{\xi}{r} \hat{u}(t, r)\right), \quad r := |\xi| \ge a.
\end{eqnarray}
Accordingly, the system for $(\hat{\rho},\hat{u})$ can be reformulated as
\begin{equation}\label{1.8}
\left\{
\begin{aligned}
    &\hat{\rho}_t + \frac{1}{r^m}(r^m\hat{\rho} \hat u)_r = 0, \\
    &(\hat{\rho} \hat u)_t + \frac{1}{r^m}(r^m\hat{\rho} \hat u^2)_r + p(\hat{\rho})_r = \left[ (2\mu(\hat{\rho})+\lambda(\hat{\rho}))\frac{1}{r^m}(r^m \hat u)_r \right]_r - 2\mu(\hat{\rho})_r \frac{m}{r}\hat u \\
    &\hspace{13em}~~ + \left[ \hat{\rho}\kappa(\hat\rho)\frac{1}{r^m}(r^m\hat{\rho}_r)_r + \frac{\hat\rho\kappa'(\hat\rho)+\kappa(\hat\rho)}{2}\hat{\rho}_r^2 \right]_r - \frac{1}{r^m}(r^m\kappa(\hat\rho)\hat{\rho}_r^2)_r,
\end{aligned}
\right.
\end{equation}
and the initial-boundary condition \eqref{1.3} is reduced to
\begin{eqnarray}\label{1.9}
\left\{\begin{array}{ll}
          (\hat{\rho},\hat u)\big|_{t=0}=(\hat{\rho}_0,\hat u_0)(r), \,\,\,\,& r\geq a,\\[2mm]
          (\partial_r\hat{\rho},\hat u)\big|_{r=a}=0, \,\,\,\,& t\ge 0,
\end{array}\right.
\end{eqnarray}
where $m=d-1$ denotes the geometric factor related to the spatial dimension $d$. The initial and boundary data are assumed to be compatible.

To establish the global existence, it is convenient to reformulate the initial--boundary value problem \eqref{1.8}--\eqref{1.9} in Lagrangian coordinates. To this end, we introduce the Lagrangian variables $(t,x)$ and denote
\begin{equation}\label{1.10}
(\widetilde{\rho},\widetilde{u})(t,x)=(\hat{\rho},\hat u)(t,r),
\end{equation}
where
\begin{equation}\label{1.11}
    r = r(t, x) = r_{0}(x) + \int_{0}^{t} \hat u(s, r(s, x)) \mathrm{d}s,
\end{equation}
and
\begin{equation}\label{1.12}
    r_{0}(x) := h^{-1}(x), \quad h(r) := \int_{a}^{r} z^m \hat\rho_{0}(z) dz.
\end{equation}
Note that the function $h(r)$ is invertible on $[a,\infty)$ provided that $\hat{\rho}_0(z)>0$ for all $z\in[a,\infty)$. In view of \eqref{1.8}$_1$, \eqref{1.9}$_2$, and \eqref{1.11}, we obtain
\begin{equation*}
    \frac{\partial}{\partial t} \int_{a}^{r(t,x)} z^m \hat\rho(t, z) dz = 0.
\end{equation*}
Then it is easy to check
\begin{equation}\label{1.13}
    \int_{a}^{r(t,x)} z^m \hat\rho(t, z) dz = h(r_0(x)) = x,  \,\,\quad r(t,0) = a.
\end{equation}

As a consequence, the region $\{(t,r):\, t \geq 0,\ a \leq r < \infty\}$ is transformed into $\{(t,x):\, t \geq 0,\ 0 \leq x < \infty\}$. For simplicity, we henceforth denote $(\widetilde{\rho},\widetilde{u})$ by $(\rho,u)$. The identities (\ref{1.11}) and (\ref{1.13}) imply
\begin{equation}\label{1.14}
   r_t(t, x) = u(t, x), \quad r_x(t, x) = r^{-m}v(t, x),
\end{equation}
where $v=\dfrac{1}{\rho}$ is the specific volume. In view of (\ref{1.14}), system (\ref{1.8}) can be reformulated as
\begin{equation}\label{1.15}
\left\{
\begin{aligned}
    &v_t  = \left(r^mu\right)_x, \\
    &u_t + r^m\left[p(v)\right]_x = r^m\left[ \left(2\mu(v)+\lambda(v)\right)\frac{\left(r^m u\right)_x}{v}-\left(\frac{r^{2m}\kappa(v)}{v^{5}}v_x\right)_x+\frac{v\kappa'(v)-5\kappa(v)}{2v^{6}}r^{2m}v_x^2 \right]_x  \\
    &\hspace{7em}~~ -2mr^{m-1}\left(\mu(v)\right)_xu-m\frac{\kappa(v)}{v^{5}}r^{2m-1}v_x^2,
\end{aligned}
\right.
\end{equation}
where $t > 0$ and $x \in \mathbb{R}_+ := (0, \infty)$. The corresponding initial and boundary conditions are
\begin{eqnarray}\label{1.16}
\left\{\begin{array}{ll}
          (v, u)\big|_{t=0}=(v_0, u_0)(x), \,\,\,\,& x\geq 0,\\[2mm]
          (v_x, u)\big|_{x=0}=0, \,\,\,\,& t\ge 0.
\end{array}\right.
\end{eqnarray}
The initial data are supposed to be compatible with boundary conditions \eqref{1.16}$_2$ and satisfy the following far-field condition:
\begin{equation}\label{1.17}
\lim\limits_{x\to+\infty}(v_0,u_0)=(1.0),
\end{equation}
Accordingly, the conditions \eqref{1.5} and \eqref{1.6} are transformed into
\begin{eqnarray}
&&p(v)=v^{-\gamma},\quad \mu(v)=\widetilde{\mu},\quad \lambda(v)=\widetilde{\lambda}v^{-\alpha},\quad \kappa(v)=\widetilde{\kappa}v^{-\beta},\label{1.18}\\[2mm]
&&p(v)=v^{-\gamma}, \quad \mu(v)=\widetilde{\mu}v^{-\alpha},\quad \lambda(v)=\widetilde{\lambda}v^{-\alpha},\quad \kappa(v)=\widetilde{\kappa}v^{-\beta}.\label{1.19}
\end{eqnarray}
For simplicity and without loss of generality, we assume $\tilde{\kappa}=1$ in what follows.

We are now in a position to state the main result of this paper, namely, the global existence result for problem \eqref{1.15}--\eqref{1.17}. Clearly, this result is equivalent to a corresponding statement for problem \eqref{1.8}--\eqref{1.9} in the spherically symmetric Eulerian coordinates. Depending on the different values of the viscosity coefficients $\mu(\rho)$ and $\lambda(\rho)$ given in \eqref{1.18} and \eqref{1.19}, we obtain two main results in this paper. Our first result is concerned with the Kazhikhov model, which can be stated as follows.
\begin{Theorem}\label{T1.1}
For the Kazhikhov model, the viscosity coefficients $\mu(v)$, $\lambda(v)$, the thermal conductivity coefficient $\kappa(v)$, and the pressure function $p(v)$ satisfy (\ref{1.18}). We assume that the parameters $\alpha$, $\beta$ and $\gamma$ satisfy one of the following conditions:
\begin{itemize}
    \item[(i)] $\alpha\in\mathbb{R},\,\, -3\leq\beta\leq-2,\,\,\gamma\geq1;$\\
    \item[(ii)] $\dfrac{\beta+2-\sqrt{-2(\beta+2)(\beta+5)}}{2}<\alpha\leq\beta+3,\,\,\dfrac{-7-\sqrt{3}}{2}\leq\beta<-3,\,\, \gamma>-\beta-2$;\\
    \item[(iii)] $\dfrac{-3-\sqrt{-2\beta^2-22\beta-59}}{2}<\alpha\leq\beta+3,\,\,-5\leq\beta\leq-\dfrac{14}{3},\,\,\gamma>-\beta-2.$
\end{itemize}
Further, suppose that the initial data $(v_0, u_0)(x)$ satisfy
\begin{equation}\label{1.20}
\begin{cases}
    v_0(x)-1\in H^1(\mathbb{R}^+),\,u_0(x)\in H^1(\mathbb{R}^+),\,v_{0x}(x)\in H_r^1({\mathbb{R}^+}),\\[4pt]
    \underline{V}\leq v_0(x)\leq\overline{V},~~~~~~~~~~\forall x\ge 0,
   \end{cases}
\end{equation}
where $\underline{V}$ and $\overline{V}$ are arbitrary positive constants. The initial-boundary value problem  \eqref{1.15}--\eqref{1.17} has a unique global strong solution $(v, u)(t, x)$, which satisfies for any given constant $T > 0$ that
\begin{align}
&~~~~~~~~~~C_1^{-1}\le v(t,x) \le C_1,  \quad \forall (t,x) \in [0,T] \times \mathbb{R}^+,\label{1.21}\\
    &\|v(t)-1\|_1^2 +\|u(t)\|_1^2+ \|v_x(t)\|_{1,r}^2+\int_0^t \left( \|v_x(\tau)\|^2 + \|v_{xx}(\tau)\|_{1,r}^2+\|u_x(\tau)\|_{1,r}^2\right) d\tau \label{1.22}\\
    \leq& C_{2}(T)\left(\|v_0-1\|_1^2 +\|u_{0}\|_1^2+ \|v_{0x}\|_{1,r}^2+1\right). \notag
\end{align}
Here, $C_1$ is a positive constant depending only on $\alpha, \beta, \gamma, \underline{V}, \overline{V}, \|v_0-1\|, \|u_{0}\|$ and $\|v_{0x}\|_{0,r}$, and $C_2(T)$ is a positive constant depending only on $T, \alpha, \beta, \gamma, \underline{V}, \overline{V}, \|v_0-1\|_1, \|u_{0}\|_1$ and $\|v_{0x}\|_{1,r}$.
\end{Theorem}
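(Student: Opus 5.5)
The proof follows the standard continuation argument. Local existence and uniqueness of strong solutions to \eqref{1.15}--\eqref{1.17} in the function space of \eqref{1.22}, with $v$ bounded above and below, follow from the contraction-mapping method applied to the linearized parabolic--dispersive system. It then suffices to derive a priori estimates on $[0,T]$ whose constants depend only on $T$ and the initial data, never on the lower and upper bounds of $v$ assumed a priori. The core of the proof is therefore the uniform pointwise bound \eqref{1.21}. Once that is known, the higher-order estimates in \eqref{1.22} follow by routine weighted energy estimates.

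\textbf{Step 1 (basic energy).} Multiply \eqref{1.15}$_2$ by $u$ and \eqref{1.15}$_1$ by $p(1)-p(v)$, and use the Korteweg structure: the capillary terms combine into $\frac{d}{dt}\int \frac{\kappa(v)r^{2m}v_x^2}{2v^5}$ after integration by parts, using $r_t=u$ and $(v_x,u)|_{x=0}=0$. This gives
\[
\int\Big(\tfrac{u^2}{2}+\Phi(v)+\tfrac{r^{2m}v_x^2}{2v^{5+\beta}}\Big)dx+\int_0^t\!\!\int\Big(\tfrac{(2\mu+\lambda)(r^mu)_x^2}{v}+\dots\Big)\le C,
\]
where $\Phi(v)$ is the usual convex relative pressure potential. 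The $\mu$-cross term $-2m r^{m-1}\mu_x u$ vanishes in case \eqref{1.18} because $\mu$ is constant. For the viscous term one needs $\frac{2\mu+\lambda}{v}(r^mu)_x^2$ minus the geometric part to remain nonnegative, which follows from $2\mu+d\lambda>0$ in the usual way.

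\textbf{Step 2 (Kanel'-type pointwise bounds).} The quantity $\int r^{2m}v_x^2 v^{-5-\beta}dx$ is controlled, and $r\ge a$, so for $\psi(v)=\int_1^v s^{-(5+\beta)/2}ds$ we have
\[
|\psi(v(t,x))|\le \int |\psi'(v)v_x|\,dx\le C\Big(\int \Phi(v)\Big)^{1/2}\Big(\int \tfrac{v_x^2}{v^{5+\beta}}\Big)^{1/2}
\]
on the set where $v$ is away from $1$. When $-3\le\beta\le-2$, $\psi$ diverges at both $0$ and $\infty$ in a way dominated by $\Phi$ (here $\gamma\ge1$ suffices), and this yields $C_1^{-1}\le v\le C_1$ directly, for every $\alpha$; this is case (i).

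For $\beta<-3$ the basic energy only gives one side, say the upper bound. The lower bound then needs an extra estimate: a BD-type entropy obtained by testing the momentum equation with $u+r^m(\varphi(v))_x$, where $\varphi'(v)=(2\mu+\lambda)/v$. In the Kazhikhov case this produces $\int r^{2m}\varphi(v)_x^2$ plus the dissipation $\int_0^t\!\int r^{2m}v^{-\gamma-1}v_x^2$ and a capillary dissipation of order $v_{xx}^2$. The cross terms produced by $\kappa$ and by $\lambda=\widetilde\lambda v^{-\alpha}$ have to be absorbed. Requiring the resulting quadratic form in $(v_x^2, v_{xx}v_x,\dots)$ with coefficients powers of $v$ to be positive definite gives the discriminant conditions on $\alpha$ in (ii) and (iii), the range $\alpha\le\beta+3$, and $\gamma>-\beta-2$, which makes the pressure term dominate. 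With this entropy bound, Kanel's argument applied to $\varphi$ gives the missing bound.

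\textbf{Step 3 (higher order).} With $v$ bounded, estimate $v_x$ in $H^1_r$ and $u$ in $H^1$ by multiplying \eqref{1.15}$_2$ by $-u_{xx}$ and by differentiating the capillary terms. Gronwall's inequality on $[0,T]$ then gives \eqref{1.22}, with $C_2(T)$ depending on $T$. The term $-m\kappa r^{2m-1}v_x^2/v^5$ and the lower-order geometric terms are controlled because $r\ge a$ and $r^{-1}$ is bounded. Uniqueness comes from an energy estimate for the difference of two solutions. Continuation then completes the proof.

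\textbf{Main obstacle.} The main obstacle is Step 2 in cases (ii) and (iii). One must design the correct effective velocity and check that the weighted quadratic form is coercive uniformly in $v$. This algebra is where the explicit square-root thresholds come from, and I would first verify it by computing the exact coefficients of $v^{k}v_xv_{xx}$ and $v^{k}v_x^4$ after integration by parts.
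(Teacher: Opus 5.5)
Your Step 1 and case (i) of Step 2 follow the paper. One small fix there: the Kanel function must carry the energy weight, $\overline{\Upsilon}(v)=\int_1^v\sqrt{\Psi(s)}\,s^{-(\beta+5)/2}\,ds$ as in Lemma \ref{L2.3}; with your $\psi$ the product bound on the half-line does not hold.

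The genuine gap is the lower bound for $\beta<-3$. You obtain it from a BD-type entropy, but in this geometry that estimate only closes through Gronwall. Terms such as $\int_0^t\!\int u^2$, $\int_0^t\!\int v_x^2/v^{2\alpha+2}$, and the geometric terms $m(m-1)v_x^2/(r^2v^{\alpha+\beta+4})$ and $mu^2v_x/(r^{m+1}v^{\alpha+1})$ have no time-uniform control. That is why Lemma \ref{L2.4} only gives the constants $C_6(T)$ and $C_7(T)$. Kanel's argument applied to $\varphi$ then yields only $v\ge C(T)^{-1}$, and the paper explicitly remarks that this route gives a $T$-dependent lower bound. Theorem \ref{T1.1} asserts $C_1^{-1}\le v\le C_1$ with $C_1$ independent of $T$. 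So your argument suffices for continuation and global existence, but it does not prove \eqref{1.21} as stated.

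The missing idea, Lemma \ref{L2.5}, is that the basic energy alone gives the lower bound when $\beta<-3$. This is exactly where $\gamma>-\beta-2$ is used. It is not a coercivity condition in the BD form: the pressure there contributes the good term $\gamma v_x^2/v^{\alpha+\gamma+2}$ for any $\gamma>0$, so nothing needs to dominate.

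Work on unit intervals $[i,i+1]$. Convexity of $\Phi$ and Jensen place $\int_i^{i+1}v\,dx$ between the two roots of $\Phi=\epsilon_0$. This gives a point $a_i(t)\in[i,i+1]$ where $v(t,a_i(t))$ is bounded above and below, and also $\int_i^{i+1}v^{1-\gamma}\,dx\le C$, since $\gamma>1$. Then, using $r\ge a$ to control the last factor by the energy,
\[
\frac1v(t,x)-\frac1v(t,a_i(t))\le\Big(\int_i^{i+1}v^{1-\gamma}\,v^{\beta+\gamma}\,dx\Big)^{1/2}\Big(\int_{\mathbb{R}^+}\frac{v_x^2}{v^{\beta+5}}\,dx\Big)^{1/2}.
\]
If $\beta+\gamma\ge0$, the upper bound on $v$ closes this. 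If $-2<\beta+\gamma<0$, it gives $\|v^{-1}\|_{L^\infty}\le C+C\|v^{-1}\|_{L^\infty}^{-(\beta+\gamma)/2}$ with exponent less than $1$, which Young closes. All constants are independent of $T$.

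In the paper, the BD-type estimate serves only to produce $\int_0^t\|v_{xx}\|_{0,r}^2\,d\tau$ for the higher-order estimates, where $T$-dependence is harmless. The same holds for the restrictions on $\alpha$: positivity of $f_1$ or $f_1'$, and $\alpha\le\beta+3$ to absorb $[(r^mu)_x]^2/v^{\alpha-\beta-2}$ into the basic dissipation using only $v\le C$. The paper runs that estimate in case (i) as well, which your Step 3 leaves implicit.
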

Our second theorem concerns the density-dependent viscosity model. In this case, we establish the global existence of spherically symmetric solutions to the initial--boundary value problem \eqref{1.15}--\eqref{1.17} around the constant state $(1,0)$.
\begin{Theorem}\label{T1.2}
For the density-dependent viscosity model, the viscosity coefficients $\mu(v)$, $\lambda(v)$, the thermal conductivity coefficient $\kappa(v)$, and the pressure function $p(v)$ satisfy (\ref{1.19}). We assume that the parameters $\alpha$, $\beta$ and $\gamma$ satisfy one of the following conditions:
\begin{itemize}
    \item[(i)] $\alpha\in\mathbb{R},\,\,-3\leq\beta\leq-2,\,\, \gamma\geq1;$\\
    \item[(ii)] $\alpha=\frac{\beta+3}{2},\,\,\beta<-3,\,\,\gamma>-\beta-2;$\\
    \item[(iii)] $\dfrac{\beta+2-\sqrt{-2(\beta+2)(\beta+5)}}{2}<\alpha\leq\dfrac{\beta+4}{3}\,\,-4\leq\beta<-3,\,\, \gamma>-\beta-2;$\\
    \item[(iv)] $\dfrac{\beta+2-\sqrt{-2(\beta+2)(\beta+5)}}{2}<\alpha<\dfrac{\beta+2+\sqrt{-2(\beta+2)(\beta+5)}}{2},\,\,-5\leq\beta<-4,\,\, \gamma>-\beta-2;$
    \item[(v)] $\dfrac{-3-\sqrt{-2\beta^2-22\beta-59}}{2}<\alpha<\dfrac{-3+\sqrt{-2\beta^2-22\beta-59}}{2},\,\,\dfrac{-11-\sqrt{3}}{2}\leq\beta\leq\dfrac{-11+\sqrt{3}}{2},\,\,\gamma>-\beta-2.$\\
\end{itemize}
Further, suppose that the initial data $(v_0, u_0)(x)$ satisfy \eqref{1.20}. Then the initial-boundary value problem  \eqref{1.15}--\eqref{1.17} has a unique global strong solution $(v, u)(t, x)$, which satisfies for any given constant $T > 0$ that
\begin{align}
&~~~~~~~~~~C_3^{-1}\le v(t,x) \le C_3,  \quad \forall (t,x) \in [0,T] \times \mathbb{R}^+,\label{1.23}\\
    &\|v(t)-1\|_1^2 +\|u(t)\|_1^2+ \|v_x(t)\|_{1,r}^2+\int_0^t \left( \|v_x(\tau)\|^2 + \|v_{xx}(\tau)\|_{1,r}^2+\|u_x(\tau)\|_{1,r}^2\right) d\tau \label{1.22}\\
    \leq& C_{4}(T)\left(\|v_0-1\|_1^2 +\|u_{0}\|_1^2+ \|v_{0x}\|_{1,r}^2+1\right). \notag
\end{align}
Here, $C_3$ is a positive constant depending only on $\alpha, \beta, \gamma, \underline{V}, \overline{V}, \|v_0-1\|, \|u_{0}\|$ and $\|v_{0x}\|_{0,r}$, and $C_4(T)$ is a positive constant depending only on $T, \alpha, \beta, \gamma, \underline{V}, \overline{V}, \|v_0-1\|_1, \|u_{0}\|_1$ and $\|v_{0x}\|_{1,r}$.
\end{Theorem}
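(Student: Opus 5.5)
We prove Theorem \ref{T1.2} by the standard continuation argument: local existence plus uniform-in-time a priori estimates on $[0,T]$. Local existence and uniqueness of a strong solution for \eqref{1.15}--\eqref{1.17} with data as in \eqref{1.20} follow from the usual linearization and contraction-mapping argument for the parabolic system in Lagrangian coordinates. For this we view $v$ as satisfying a fourth-order parabolic equation after substituting the Korteweg term.

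It therefore suffices to prove the following: if $(v,u)$ is a solution on $[0,T]$ with $v$ bounded above and below by positive constants, then \eqref{1.23} and \eqref{1.22} hold with constants independent of those a priori bounds. The heart of the matter is the uniform pointwise bound \eqref{1.23}; the higher-order estimate \eqref{1.22} is then routine.

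\textbf{Step 1: basic energy identity.} We multiply \eqref{1.15}$_2$ by $u$ and \eqref{1.15}$_1$ by $p(1)-p(v)$, integrate over $\mathbb{R}^+$, and use the boundary conditions \eqref{1.16}$_2$. For the density-dependent model \eqref{1.19} we use the identity
\[
(r^mu)_x^2-2mr^{m-1}vu\,(r^mu)_x\ \text{ combined with }\ \mu(v)\ \text{terms},
\]
which rewrites the viscous dissipation as
\[
\int \frac{2\mu+\lambda}{v}(r^mu)_x^2-2m\bigl(\mu r^{m-1}u^2\bigr)_x\,dx .
\]
Condition \eqref{1.4} makes this nonnegative. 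The Korteweg contributions collapse into $\frac{d}{dt}\int \frac{\kappa(v)}{2v^5}r^{2m}v_x^2\,dx$. This gives a bound on
\[
\int\Bigl(\tfrac{u^2}{2}+\Phi(v)+\tfrac{r^{2m}v_x^2}{v^{5+\beta}}\Bigr)dx
\]
together with the dissipation, where $\Phi(v)$ is the convex potential of the pressure. The weight $r^{2m}$ is exactly what the norm $\|\cdot\|_{0,r}$ measures.

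\textbf{Step 2: pointwise bounds via Kanel'.} Set $\Psi(v)=\int_1^v \sqrt{\Phi(s)}\,s^{-(5+\beta)/2}\,ds$. Since $r\ge a$, we have
\[
|\Psi(v(t,x))|\le \int_x^\infty \sqrt{\Phi}\,v^{-(5+\beta)/2}|v_x|\,dy\le C\,\|\sqrt{\Phi}\|\,\bigl\|r^m v^{-(5+\beta)/2}v_x\bigr\| \le C .
\]
The bound is uniform provided $\Psi(v)\to\pm\infty$ as $v\to0^+$ and as $v\to\infty$. Using $\Phi(v)\sim v^{1-\gamma}$ as $v\to0$ and $\Phi(v)\sim v$ as $v\to\infty$, these requirements are:
\begin{itemize}
\item as $v\to0^+$: $(1-\gamma)/2-(5+\beta)/2\le-1$, i.e. $\gamma\ge -\beta-2$;
\item as $v\to\infty$: $1/2-(5+\beta)/2\ge-1$, i.e. $\beta\le-2$.
\end{itemize}
This is the origin of the constraint on $\gamma$ and of the upper limit $\beta\le -2$.

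When $\beta\ge -3$, the first requirement already holds for every $\gamma\ge1$, giving case (i). When $\beta<-3$, the Kanel' function alone does not suffice, and we supplement it with a BD-type entropy estimate. Specifically, we multiply the momentum equation by the effective velocity $u+r^m(\varphi(v))_x$ with $\varphi'(v)=(2\mu+\lambda)/v$. This produces an estimate of $\int r^{2m}v^{-2\alpha-2}v_x^2$ and dissipation involving $r^{2m}v_{xx}^2$. Along the way, the Korteweg cross terms, of the form $\int v^{\,\cdot}\,r^{2m}v_{xx}(\ldots)$ and $\int v^{\,\cdot}\,r^{2m}v_x^4$, must be shown to be nonnegative, or absorbed, as a quadratic form in $(v_{xx},v_x^2/v)$. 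Positivity of the discriminant of that form is a quadratic inequality in $\alpha$ with coefficients depending on $\beta$; solving it yields the ranges in (ii)--(v), for example $\alpha$ between the roots $(\beta+2\pm\sqrt{-2(\beta+2)(\beta+5)})/2$. The geometric terms with $m r^{2m-1}$ are controlled because $r\ge a$ and $r_x=r^{-m}v$. This positivity verification, including the exact algebra that matches each parameter region, is the main obstacle. To handle it, we will first compute the full coefficient matrix for general $(\alpha,\beta)$ and only then specialize to each case. The extra gradient bound then gives a second Kanel'-type function $\int v^{-\alpha-1}\sqrt{\Phi}$, which closes the argument for $\gamma>-\beta-2$.

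\textbf{Step 3: higher-order estimates.} With $C_3^{-1}\le v\le C_3$ in hand, we obtain $\|u_x\|$, $\|r^m v_{xx}\|$ and $\|r^m v_{xxx}\|$ together with their time integrals. We do this by multiplying \eqref{1.15}$_2$ by $-u_{xx}$ and differentiating the $v$-equation, using Gagliardo--Nirenberg inequalities and Gronwall on $[0,T]$. This yields \eqref{1.22}. Uniqueness follows from an $L^2$ energy estimate for the difference of two solutions.
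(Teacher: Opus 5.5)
There is a genuine gap in Step 2, and it is self-inflicted. Your first Kanel' function already gives the pointwise bound in every case. In (ii)--(v) one has $\beta<-3\le-2$ and $\gamma>-\beta-2>1$, so both of your divergence conditions hold. Then $|\Psi(v(t,x))|\le C$, with $C$ depending only on the initial energy, yields $C_3^{-1}\le v\le C_3$ with $C_3$ independent of $T$ and of $\alpha$.

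This is a one-step substitute for the paper's Lemmas \ref{L3.3} and \ref{L3.5}. The paper runs Kanel' with the weaker bound $\Phi(v)\ge c_0(1-v)^2/(1+v)$. That discards the $v^{1-\gamma}$ singularity and gives the lower bound only for $\beta\ge-3$. The paper then recovers it for $\beta<-3$, $\gamma>-\beta-2$ by Jensen on unit intervals plus H\"older on $\int|v_x|v^{-2}\,dx$, using the same two energy quantities.

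Your claim that Kanel' ``does not suffice'' for $\beta<-3$ is therefore wrong, and the replacement you propose would not prove the theorem. The bound on $\int v_x^2v^{-2\alpha-2}\,dx$ from the BD-type estimate is only $C(T)$, because it is closed by Gronwall on $[0,T]$. A second Kanel' function built from it therefore gives a $T$-dependent lower bound. Moreover, $\int_0^1 s^{-\alpha-1}\sqrt{\Phi(s)}\,ds=\infty$ requires $\gamma\ge1-2\alpha$, which agrees with $\gamma\ge-\beta-2$ only when $\alpha=\frac{\beta+3}{2}$.

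The second gap is the BD-type estimate. You leave it as the main obstacle and expect a discriminant condition to reproduce (ii)--(v), but it does not. With $k=\alpha+\beta+6$, the Korteweg part of the dissipation is $\frac{r^{2m}}{v^{k}}\bigl(v_{xx}-\frac{\beta+5}{2}\frac{v_x^2}{v}\bigr)\bigl(v_{xx}-(\alpha+1)\frac{v_x^2}{v}\bigr)$ up to geometric terms. Even using $\int r^{2m}v^{-k-1}v_{xx}v_x^2\,dx=\frac{k+1}{3}\int r^{2m}v^{-k-2}v_x^4\,dx+\cdots$ optimally, the best positivity condition is $\frac{\beta+1}{2}<\alpha<\frac{\beta+4}{2}$. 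That misses, for example, $\beta=-4$ with $\alpha\in(-2,-\frac32]$, which (iii) allows.

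In fact no sign condition is needed once $v$ is bounded. One has $\int r^{2m}v_x^4\,dx\le 2a^{-2m}\|r^mv_x\|^3\|r^mv_{xx}\|\le\varepsilon\|r^mv_{xx}\|^2+C$, because $\|r^mv_x\|$ is uniformly bounded by the basic energy. The cubic geometric terms are absorbed the same way. This is exactly the paper's Case 1 argument in Lemma \ref{L3.4}; it uses only two-sided bounds on $v$ and so works for every $\alpha$.

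The actual role of the BD-type estimate is to supply $\int_0^t\|v_{xx}\|_{0,r}^2\,d\tau\le C(T)$. That bound is what lets Gronwall close in Step 3 against terms such as $\|v_{xx}\|_{0,r}^4$; the estimate plays no role in the pointwise bound. Note also that the paper multiplies by $r^{-m}v_x/v^{\alpha+1}$, which keeps all weights at $r^{2m}$. Your multiplier $r^m(\varphi(v))_x$ would produce $r^{4m}$-weighted dissipation, not the $r^{2m}v_{xx}^2$ you state.
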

Some remarks on Theorems \ref{T1.1}-\ref{T1.2} are given in the following.
\begin{Remark}
To the best of our knowledge, the present work appears to be the first to address spherically symmetric solutions to the Navier--Stokes--Korteweg system.
\end{Remark}
\begin{Remark}
Theorem \ref{T1.2} includes the BD entropy case. In particular, taking $\tilde{\mu}=1$ and $\tilde{\lambda}=2(\alpha-1)$, and recalling the constitutive relations \eqref{1.4}, we require $\alpha>0$. Under these choices, the BD entropy model is recovered, and the corresponding assumptions on the parameters $\alpha, \beta$ and $\gamma$ given by
\begin{itemize}
    \item[(i)] $\alpha>0,\,\,-3\leq\beta\leq-2,\,\, \gamma\geq1;$
    \item[(ii)] $0<\alpha<\dfrac{\beta+4}{3},\,\,-4<\beta<-3,\,\,\gamma>-\beta-2;$
\end{itemize}
\end{Remark}
\begin{Remark}
Comparing the parameter ranges in Theorems \ref{T1.1} and \ref{T1.2}, the main difference arises from the different terms appearing in the estimation process of $\left\|\dfrac{v_x}{v^{\alpha+1}}\right\|$. In the case of Theorem \ref{T1.1}, due to the structural differences of the model, in the estimation process we need to deal with $\int_0^t\int_{\mathbb{R}^+}\left(\tilde{\mu}\frac{(r^m u)_x v_x^2}{v^{\alpha+3}} - \tilde{\mu}\frac{(r^m u)_x v_{xx}}{v^{\alpha+2}}\right)dxd\tau$ and the estimate of this term requires the additional condition $\alpha \leq \beta + 3$. Consequently, under this restriction, when $\alpha=\frac{\beta+3}{2}$, the Kazhikhov model admits no admissible parameter range. In contrast, for the density-dependent viscosity model, the corresponding terms $\int_0^t\int_{\mathbb{R}^+}\left(m\tilde{\mu}\frac{u}{v^\alpha r}\left(\frac{v_x}{v^{\alpha+1}}\right)_x - m\tilde{\mu}\frac{(r^m u)_x v_x}{r^{m+1}v^{2\alpha+1}}\right)dxd\tau$ lead to the condition $\alpha \leq \frac{\beta+4}{3}$, under which admissible parameter ranges can still be obtained. Therefore, for the density-dependent viscosity model in Theorem \ref{T1.2}, the admissible parameter range additionally includes the case $\alpha=\frac{\beta+3}{2}$.
\end{Remark}
\begin{Remark}
In \cite{23}, Chen \textit{et al.} established the global existence of classical solutions to the problem around the constant state $(\overline{v},\overline{u})$ under arbitrarily large initial perturbations. In that work, only time-dependent upper and lower bounds for the specific volume $v(t,x)$ were obtained, namely, $C(T)^{-1}\leq v(t,x)\leq C(T)$. In contrast, in the present paper we also establish the global existence of spherically symmetric solutions to the initial--boundary value problem \eqref{1.15}--\eqref{1.17} around the constant state $(1,0)$. Moreover, we derive uniform (in time) bounds for the specific volume of the form $C_0^{-1}\leq v(t,x)\leq C_0$, while incurring only a mild restriction on the admissible parameter range.
\end{Remark}
\begin{Remark}
In Theorem \ref{T1.1}-\ref{T1.2}, we assume that the norms $\left(\|v_0-1\|_1 +\|u_{0}\|_1+ \|r^mv_{0x}\|+ \|r^mv_{0xx}\|\right)$ can be bounded by an arbitrarily large.
\end{Remark}
\begin{Remark}
The inability to establish the large-time behavior in the multidimensional isentropic case mainly arises from the presence of the high-dimensional terms $\int_0^t \int_{\mathbb{R}^+} m(m-1)\frac{v_x^2}{v^{\alpha+\beta+4}r^2}dx d\tau$, $\int_0^t \int_{\mathbb{R}^+} 2m\frac{u(r^m u)_x}{r^{2m+1} v^\alpha}dx d\tau$ and $\int_0^t \int_{\mathbb{R}^+} m\frac{u^2 v_x}{r^{m+1} v^{\alpha+1}}dx d\tau$. As a consequence, the analysis in the present paper is restricted to the global existence of solutions. Nevertheless, since a complete basic energy estimate can still be established, the specific volume $v(t,x)$ admits uniform (in time) upper and lower bounds by positive constants independent of $T$. It is worth noting that, in the one-dimensional case, i.e., when $d=1$ (equivalently $m=0$), the large-time asymptotic behavior of solutions can indeed be obtained.
\end{Remark}
\begin{Remark}
If the parameters $\alpha$, $\beta$, and $\gamma$ satisfy the corresponding assumptions, then, by an argument similar to that used in the proof of Lemma \ref{L2.5}, when $\beta<-3$ and $\gamma>1-2\alpha$, the specific volume $v(t,x)$ still admits both upper and lower bounds. However, the lower bound depends on time $T$, that is,
\[
C(T)\leq v(t,x)\leq C_0.
\]
Next, we briefly present the key steps of the proof.
    \begin{equation*}
\begin{aligned}
\frac{1}{v}(t,x)-\frac{1}{v}(t,a_i(t))
&= \int_{a_i(t)}^x \left(\frac{1}{v}\right)_y dy
\leq \int_i^{i+1} \left|\frac{v_x}{v^2}\right| dx \\
&\leq \int_i^{i+1} v^{\alpha-1} \left|\frac{v_x}{v^{\alpha+1}}\right| dx \\
&\leq C \left(\int_i^{i+1} v^{2\alpha-2} dx\right)^{\frac{1}{2}}
\left(\int_i^{i+1} \frac{v_x^2}{v^{2\alpha+2}} dx\right)^{\frac{1}{2}} \\
&\leq C(T) \left(\int_i^{i+1} v^{-\gamma+1} \cdot v^{2\alpha+\gamma-3} dx\right)^{\frac{1}{2}} \\
&\leq C(T) \|v\|_{L^\infty}^{\frac{2\alpha+\gamma-3}{2}} \\
&\leq
\begin{cases}
C(T), & \text{if } 2\alpha+\gamma-3 \geq 0,\\[4pt]
C(T) \left\|\frac{1}{v}\right\|_{L^\infty}^{-\frac{2\alpha+\gamma-3}{2}}, & \text{if } 2\alpha+\gamma-3 \leq 0.
\end{cases}
\end{aligned}
\end{equation*}
Moreover, by applying Lemma \ref{L2.3}, we deduce that when $\gamma>1-2\alpha$ and $\beta<-3$, the specific volume satisfies
\[
C(T)\leq v(t,x)\leq C_0,
\]
where the lower bound $C(T)$ depends on $T, \alpha, \beta, \gamma, \underline{V}, \overline{V}, \|v_0-1\|_1, \|u_{0}\|$ and $\|v_{0x}\|_{0,r}$, while $C_0$ is a positive constant independent of time and depends only on $\alpha, \beta, \gamma, \underline{V}, \overline{V}, \|v_0-1\|, \|u_{0}\|$ and $\|v_{0x}\|_{0,r}$.
\end{Remark}
\begin{Remark}
In this paper, we focus on the problem of spherically symmetric strong solutions in the exterior domain. The corresponding problem in domains containing the origin remains open and will be addressed in future work.
\end{Remark}
We now outline the main ideas in the proof of our results. The principal difficulty in proving Theorem \ref{T1.1} is to establish uniform-in-time positive lower and upper bounds for the specific volume $v(t,x)$. To this end, we employ the method developed by Y.~Kanel' \cite{28}. First, due to the presence of the Korteweg tensor, the basic energy estimate (see Lemma \ref{L2.1}) yields a term of the form $\int_{\mathbb{R}^+}\dfrac{r^{2m}}{2v^{\beta+5}}v_x^2dx$. Since the problem is posed in an exterior domain with $r>a$, this implies the estimate $\int_{\mathbb{R}^+}\dfrac{v_x^2}{v^{\beta+5}}dx$. Combining this with the estimate of $\Phi(v)$ in \eqref{2.5}, one can derive the  uniform-in-time lower and upper bounds of $v(t,x)$ in case (i) of Theorem \ref{T1.1} via Kanel's method (see Lemma \ref{L2.3}). Next, under suitable assumptions on the parameters $\alpha$ and $\beta$, we obtain the estimate $\int_{\mathbb{R}^+}\dfrac{v_x^2}{v^{2\alpha+2}}dx$ from equation \eqref{2.1}$_2$ (see Lemma \ref{L2.4}). In the course of this estimate, a delicate term $\int_0^t\int_{\mathbb{R}^+}\dfrac{r^{2m}}{v^{\alpha+\beta+8}}v_x^4dxd\tau$ arises. This term is controlled by combining the Sobolev inequality and Gronwall's inequality with the estimates of $\int_{\mathbb{R}^+}\dfrac{v_x^2}{v^{\beta+5}}dx$ and $\int_0^t\int_{\mathbb{R}^+}\dfrac{[(r^m u)_x]^2}{v}dxd\tau$ (see \eqref{2.4}).  Combining the parameter restrictions obtained in Lemma \ref{L2.1} with the basic energy inequality and Kanel's method yields the uniform bounds for $v(t,x)$ in cases (ii) and (iii) of Theorem \ref{T1.1}. Once these uniform-in-time lower and upper bounds are established, higher-order energy estimates follow from the lower-order ones by means of Gronwall's inequality. Finally, Theorem \ref{T1.1} is obtained via a standard continuation argument based on the local existence result and the a priori estimates. The proof of Theorem \ref{T1.2} follows along similar lines.

The remainder of this paper is organized as follows. In Section 2, we establish a series of a priori estimates and derive uniform-in-time upper and lower bounds for the specific volume $v(t,x)$, which enable us to complete the proof of Theorem \ref{T1.1}. Section 3 is devoted to the proof of Theorem \ref{T1.2}, which follows by arguments analogous to those in Section 2.

{\bf Notations.} In this paper, $C$ stands for a  positive constant which may  change from line to line and is independent of the time $t$, and  $C(\cdot,\cdots,\cdot)$ or  $C_i(\cdot,\cdots,\cdot)(i\in {\mathbb{N}})$ denotes the constant which depends explicitly on the  qualities listed in brackets.  $L^p(\mathbb{R}^+)(1\leq p<\infty)$ is the space of measurable functions whose $p$-powers are integrable on $\mathbb{R}^+$ with its norm $\|\cdot\|_{L^p}=({\int_{{\mathbb{R}^+}}|\cdot|^pdx})^{1/p}$. For notational simplicity, we denote $\|\cdot\|_{L^2}$ by $\|\cdot\|$ when $p=2$. $L^\infty (\mathbb{R}^+)$ is the space of bounded measurable functions on $\mathbb{R}^+$ with the norm  $\|\cdot\|_{L^{\infty}}=ess\sup_{x\in\mathbb{R}^+}|\cdot|$. For an integer $k\geq0$, $H^k=H^k(\mathbb{R}^+)$ represents the standard $k$-th order  Sobolev space, with its norm  denoted by  $\|f\|_{\kappa}=\left(\sum_{i=0}^{\kappa}\left\|\partial_x^i f\right \|^2\right)^{\frac{1}{2}}$. For a given constant $m=n-1\ge0$, $L_r^p(\Omega)$ $(1\leq p\leq +\infty)$
stands for the algebraically weighted $L^p$ space defined by $L_r^p(\mathbb{R}^+):=
\left\{
f\in L_{r}^p(\mathbb{R}^+)\,\bigg|\,
\|f\|_{L_r^p}^p<\infty
\right\}$, equipped with the norm $
\|f\|_{L_r^p}
=
\left(
\int_{\mathbb{R}^+} r^{2m} |f(x)|^p\,dx
\right)^{\frac{1}{p}}$. And, $H_r^k$ $(k\geq 0)$ denotes the algebraically weighted
$H^k$ space defined by $H_r^k({\mathbb{R}^+}):=
\left\{
f\,\big|\,
\partial_x^j f\in L_r^2({\mathbb{R}^+}),\,
j=0,1,\cdots,k
\right\}$, with its norm $\|f\|_{k,r}
=
\left(
\sum_{i=0}^{k}
\|\partial_x^i f\|_{L_r^2}^2
\right)^{\frac{1}{2}}.$ We also denote $\|\cdot\|_{0,r}:=\|\cdot\|_{L_r^2}$ for notational simplicity.

\section{Proof of Theorem \ref{T1.1}.}
This section is devoted to proving Theorem \ref{T1.1}. First, when the viscosity coefficients $\mu(v)$, $\lambda(v)$, the thermal conductivity coefficient $\kappa(v)$, and the pressure function $p(v)$ satisfy \eqref{1.18}, system \eqref{1.15} becomes
\begin{equation}\label{2.1}
\left\{
\begin{aligned}
    &v_t  = \left(r^mu\right)_x, \\
    &u_t + r^m\left[p(v)\right]_x = r^m\left[ \left(2\tilde{\mu}+\frac{\tilde{\lambda}}{v^\alpha}\right)\frac{\left(r^m u\right)_x}{v}-\left(\frac{r^{2m}}{v^{\beta+5}}v_x\right)_x-\frac{\beta+5}{2}\frac{r^{2m}}{v^{\beta+6}}v_x^2 \right]_x-m\frac{r^{2m-1}}{v^{\beta+5}}v_x^2.
\end{aligned}
\right.
\end{equation}
Then, we introduce the function space $X(0,T;m,M)$ in which the solutions to the initial--boundary value problem \eqref{2.1}, \eqref{1.16}--\eqref{1.17} are sought, defined as follows:
\begin{eqnarray*}
X(0,T;m,M)=\left\{(v,u)(t,x)\left|
\begin{array}{c}
(v(t,x)-1)\in C(0, T; H^{1}(\mathbb{R}^+)\cap C^1(0, T; L^{2}(\mathbb{R}^+)),\\[2mm]
u(t,x)\in C(0, T; H^{1}(\mathbb{R}^+)\cap C^1(0, T; H^{1}(\mathbb{R}^+)),\\[2mm]
(v_x(t,x))\in C(0, T; H^{1}_r(\mathbb{R}^+),\\[2mm]
(u_x(t,x),v_{xx}(t,x))\in L^2(0, T; H^1_r(\mathbb{R}^+),\\[2mm]
\displaystyle m\leq v(t,x)\leq M, \,\,(t,x)\in [0,T]\times\mathbb{R}^+,
\end{array}
\right.\right\}
\end{eqnarray*}
with $M\geq m>0$ and $T>0$ are some positive constants.

Based on the assumptions of Theorem \ref{T1.1}, we aim to establish the following a priori estimates for the initial--boundary value problem \eqref{2.1}, \eqref{1.16}--\eqref{1.17}, as stated in the following proposition.

\begin{Proposition}[A priori estimates]\label{P2.1}
Assume that the conditions of Theorem \ref{T1.1} hold, and let $(v,u)(t,x)\in X(0,T;m,M)$ for some positive constants $m$, $M$ and $T$ be a solution to the initial-boundary value problem \eqref{2.1}, \eqref{1.16}--\eqref{1.17}. Then the following a priori estimates hold:
\begin{align}
&~~~~~~~~~~C_1^{-1}\le v(t,x) \le C_1,  \quad \forall (t,x) \in [0,T] \times \mathbb{R}^+,\label{2.2}\\
    &\|v(t)-1\|_1^2 +\|u(t)\|_1^2+ \|v_x(t)\|_{1,r}^2+\int_0^t \left( \|v_x(\tau)\|^2 + \|v_{xx}(\tau)\|_{1,r}^2+\|u_x(\tau)\|_{1,r}^2\right) d\tau \label{2.3}\\
    \leq& C_{2}(T)\left(\|v_0-1\|_1^2 +\|u_{0}\|_1^2+ \|v_{0x}\|_{1,r}^2+1\right). \notag
\end{align}
Here, $C_1$ is a positive constant depending only on $\alpha, \beta, \gamma, \underline{V}, \overline{V}, \|v_0-1\|, \|u_{0}\|$ and $\|v_{0x}\|_{0,r}$, and $C_2(T)$ is a positive constant depending only on $T, \alpha, \beta, \gamma, \underline{V}, \overline{V}, \|v_0-1\|_1, \|u_{0}\|_1,$ and $\|v_{0x}\|_{1,r}$.
\end{Proposition}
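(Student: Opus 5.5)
The plan is a chain of weighted energy estimates in Lagrangian coordinates, with Kanel's method supplying pointwise bounds on $v$.

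\textbf{Step 1: basic energy estimate.} Let $\Phi(v)=\int_1^v(s^{-\gamma}-1)\,ds\ge0$. Multiply \eqref{2.1}$_2$ by $u$, use \eqref{2.1}$_1$ for the pressure and Korteweg terms, and integrate over $[0,t]\times\mathbb{R}^+$. The boundary conditions \eqref{1.16}$_2$ kill the boundary terms. The result should be
\[
\int_{\mathbb{R}^+}\Big(\frac{u^2}{2}+\Phi(v)+\frac{r^{2m}v_x^2}{2v^{\beta+5}}\Big)dx+\int_0^t\!\!\int_{\mathbb{R}^+}\Big(2\tilde\mu+\frac{\tilde\lambda}{v^\alpha}\Big)\frac{[(r^mu)_x]^2}{v}\,dx\,d\tau+\cdots\le C_0 .
\]
The cross term $-2m\tilde\mu\,(r^{m-1}u^2)_x$ arising from $\mu$ constant must be handled by integration by parts, via the identity $(r^mu)_x^2-2m(r^{m-1}u^2)_x=r^{2m}u_x^2+m(\ldots)u^2/r^2$ type rewriting. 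Since $r\ge a$, this gives a time-independent bound on $\int v_x^2 v^{-\beta-5}dx$.

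\textbf{Step 2: uniform bounds on $v$ in case (i).} Set $\Psi(v)=\int_1^v\sqrt{\Phi(s)}\,s^{-(\beta+5)/2}\,ds$. Then $|\Psi(v(t,x))|\le\int\Phi\,dx+\int v_x^2v^{-\beta-5}dx\le C$. For $-3\le\beta\le-2$, the function $\Psi(v)$ diverges as $v\to0^+$ (using $\Phi\sim v^{1-\gamma}$ or $-\log v$ near $0$) and also as $v\to\infty$ (using $\Phi\sim v$, so $\sqrt{\Phi}\,v^{-(\beta+5)/2}\sim v^{-(\beta+4)/2}$ with exponent $\ge-1$). This yields $C_1^{-1}\le v\le C_1$, as in Lemma \ref{L2.3}.

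\textbf{Step 3: cases (ii) and (iii).} Here $\beta<-3$ and the upper bound fails via $\Psi$ alone, so I would prove a BD-type estimate for $\|v_x/v^{\alpha+1}\|$. Note that $\big(\tfrac{\tilde\lambda}{v^\alpha}\tfrac{(r^mu)_x}{v}\big)=-\tfrac{\tilde\lambda}{\alpha}(v^{-\alpha})_t$, so the effective velocity is $u-r^m(\ldots)_x$. Multiply \eqref{2.1}$_2$ by $r^{-m}\cdot v_x/v^{\alpha+1}$, or equivalently test the equation for the effective velocity, and integrate. This produces the dissipation terms
\[
\int_0^t\!\!\int r^{2m}v_{xx}^2/v^{\alpha+\beta+6},\qquad \int_0^t\!\!\int v_x^2/v^{\alpha+\gamma+2},
\]
together with error terms. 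The $\tilde\mu$ terms in the remark require $\alpha\le\beta+3$. The quartic term $\int\!\!\int r^{2m}v_x^4/v^{\alpha+\beta+8}$ is handled by completing a square with $v_{xx}v_x^2$; the discriminant condition gives exactly the quadratic constraints on $\alpha$ in (ii) and (iii). The remaining part is controlled by Sobolev, $\|v_x/v^{k}\|_{L^\infty}^2\le C\|\cdot\|\|(\cdot)_x\|$, plus Gronwall using Step 1's dissipation.

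With $\|v_x/v^{\alpha+1}\|\le C$ uniformly (Gronwall constants time-independent because the integrated factors come from Step 1), Kanel's argument on unit intervals as in the remark, combined with $\gamma>-\beta-2$, gives both bounds. The key is a second potential built from $\int\Phi$ and $v_x^2/v^{2\alpha+2}$ that diverges at $\infty$ and $0$. This step, and especially the quartic term with sharp parameter ranges, is the main obstacle.

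\textbf{Step 4: higher-order estimates.} With $v$ bounded, multiply \eqref{2.1}$_2$ by $-u_{xx}$, and differentiate for $r^mv_{xx}$ via the Korteweg energy (testing with $(r^{2m}v_{xx})_x$-type multipliers). Use $a\le r\le C(1+x)$ and Gronwall to obtain \eqref{2.3} with $C_2(T)$.
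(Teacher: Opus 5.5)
\textbf{Gap in Step 3: the pointwise bounds for $\beta<-3$.} Steps 1, 2 and 4 essentially follow the paper (Lemmas 2.1, 2.3, 2.6--2.7). Step 3 does not work as written.

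First, you have the failure the wrong way round. Your own Step 2 computation shows that $\int^{\infty}\sqrt{\Phi(s)}\,s^{-(\beta+5)/2}\,ds$ diverges whenever $\beta\le-2$. So the \emph{upper} bound survives for $\beta<-3$; this is the paper's Remark 2.1. What needs extra work is the \emph{lower} bound.

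Second, the route you propose cannot give the proposition as stated, for two reasons.
\begin{itemize}
\item The BD-type estimate for $\|v_x/v^{\alpha+1}\|$ is closed by Gronwall. Its error terms, e.g.\ $\int_0^t\|u\|^2d\tau$ and $\int_0^t\|v_x/v^{\alpha+1}\|^2d\tau$, are not controlled uniformly in $t$ by the Step 1 dissipation, which involves only $(r^mu)_x$. You give no reason why they would be. The paper's version (Lemma 2.4) has constants $C_6(T)$, $C_7(T)$, and the remark after Theorem 1.2 states that this route yields only $C(T)\le v$. But $C_1$ must be independent of $T$.
\item Even granting a uniform bound, the potential $\int\sqrt{\Phi(s)}\,s^{-\alpha-1}ds$ diverges at $0$ only if $\gamma\ge1-2\alpha$. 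In cases (ii)--(iii) the constraint $\alpha\le\beta+3$ forces $1-2\alpha\ge-2\beta-5>-\beta-2$. So $\gamma>-\beta-2$ is not enough. For example, $\beta=-7/2$, $\alpha=-1$, $\gamma=2$ is admissible in (ii), yet the integrand behaves like $s^{-1/2}$ near $0$ and no lower bound follows.
\end{itemize}

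\textbf{The fix: the lower bound comes from Step 1 alone.} It comes through the singularity of $\Phi$ at $0$. Since $\gamma>-\beta-2>1$, we have $\Phi(s)\sim s^{1-\gamma}/(\gamma-1)$ as $s\to0^+$. The integrand of your Step 2 potential therefore behaves like $s^{-(\gamma+\beta+4)/2}$, and $\int_0^1\sqrt{\Phi(s)}\,s^{-(\beta+5)/2}ds=\infty$ exactly when $\gamma+\beta+2\ge0$. Your Step 2 argument, which uses $\sqrt{\Phi}$ rather than the paper's bounded $\sqrt{\Psi}$, therefore already gives time-independent two-sided bounds in all three cases.

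The paper reaches the same conclusion in Lemma 2.5 by H\"older's inequality on unit intervals:
$\int_i^{i+1}|v_x|v^{-2}dx\le\big(\int_i^{i+1}v^{1-\gamma}v^{\beta+\gamma}dx\big)^{1/2}\big(\int v_x^2v^{-\beta-5}dx\big)^{1/2}$.
It then uses the upper bound and, when $\beta+\gamma<0$, Young's inequality; that is exactly where $\gamma>-\beta-2$ enters.

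The BD-type estimate is then needed, in all three cases including (i), only for the time-integrated dissipation $\int_0^t(\|v_x\|^2+\|v_{xx}\|_{0,r}^2)\,d\tau$ in the energy inequality. There, $T$-dependence is harmless.

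\textbf{Minor points.} Your $\Phi=\int_1^v(s^{-\gamma}-1)\,ds$ is nonpositive; the integrand should be $1-s^{-\gamma}$. Also, since $\mu$ is constant, the term $-2mr^{m-1}\mu(v)_xu$ vanishes, so no cross term arises in Step 1.
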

To simplify the presentation, we work under the assumptions of Proposition \ref{P2.1} throughout the remainder of this section. First,  we provide the following basic  energy estimate.

\begin{Lemma}\label{L2.1}Assume that the conditions of Proposition \ref{P2.1} hold. Then, for $t\in[0,T]$, we have
\begin{equation}\label{2.4}
\begin{aligned}
&\int_{\mathbb{R}^+}\left(\Phi(v)+\dfrac{u^2}{2}
+\dfrac{r^{2m}}{2v^{\beta+5}}v_x^2\right)\,dx  + \int_0^t \int_{\mathbb{R}^+}
\left(2\tilde{\mu}+\dfrac{\tilde{\lambda}}{v^\alpha}\right)
\dfrac{\left[(r^m u)_x\right]^2}{v}
\,dx\,d\tau
=\epsilon_0,
\end{aligned}
\end{equation}
where the function $\Phi(v)$ is defined by
\begin{eqnarray}\label{2.5}
\Phi(v)=\int_{1}^v\left(p(1)-p(s)\right)ds,
\end{eqnarray}
with $\epsilon_0=\int_{\mathbb{R}^+}\left(\Phi(v_0)+\dfrac{u_0^2}{2}
+\dfrac{r^{2m}}{2v_0^{\beta+5}}v_{0x}^2\right)\,dx$.
\end{Lemma}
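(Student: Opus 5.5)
We take the $L^2$ inner product of the momentum equation \eqref{2.1}$_2$ with $u$ and of the mass equation \eqref{2.1}$_1$ with $p(1)-p(v)$, then integrate over $[0,t]\times\mathbb{R}^+$. Throughout we use $r_t=u$ and $r_x=r^{-m}v$ from \eqref{1.14}, the boundary conditions $u(t,0)=v_x(t,0)=0$ from \eqref{1.16}, and the decay at infinity that holds for solutions in $X(0,T;m,M)$.

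\textbf{Pressure and viscous terms.} From $v_t=(r^mu)_x$ we get $\frac{d}{dt}\int\Phi(v)\,dx=\int(p(1)-p(v))(r^mu)_x\,dx$. Integrating by parts, $\int r^m p(v)_x u\,dx=-\int p(v)(r^mu)_x\,dx$, while $\int p(1)(r^mu)_x\,dx=0$ by the boundary and far-field conditions. Hence the pressure terms combine into $\frac{d}{dt}\int\Phi(v)\,dx$. For the viscous term, one integration by parts gives
\[
\int u\,r^m\Big[\Big(2\tilde{\mu}+\frac{\tilde{\lambda}}{v^\alpha}\Big)\frac{(r^mu)_x}{v}\Big]_x dx=-\int\Big(2\tilde{\mu}+\frac{\tilde{\lambda}}{v^\alpha}\Big)\frac{[(r^mu)_x]^2}{v}\,dx .
\]
This is the dissipation in \eqref{2.4}. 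Since $\mu$ is constant, no term $-2m\,r^{m-1}\mu_x u$ appears.

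\textbf{Capillarity terms (the main step).} Set
\[
K=-\Big(\frac{r^{2m}}{v^{\beta+5}}v_x\Big)_x-\frac{\beta+5}{2}\frac{r^{2m}}{v^{\beta+6}}v_x^2 .
\]
Testing $r^mK_x$ against $u$ and integrating by parts gives $-\int K(r^mu)_x\,dx=-\int K v_t\,dx$. Expanding $K$:
\begin{itemize}
\item The first part, after one more integration by parts (with boundary term $v_xv_t\,r^{2m}v^{-\beta-5}$ vanishing at $x=0$ since $v_x=0$ there), becomes $-\int\frac{r^{2m}}{v^{\beta+5}}v_xv_{xt}\,dx$.
\item The second part contributes $+\frac{\beta+5}{2}\int\frac{r^{2m}}{v^{\beta+6}}v_x^2v_t\,dx$.
\end{itemize}
Now compare with the time derivative of the capillary energy:
\[
\frac{d}{dt}\int\frac{r^{2m}v_x^2}{2v^{\beta+5}}\,dx=\int\frac{r^{2m}v_xv_{xt}}{v^{\beta+5}}\,dx-\frac{\beta+5}{2}\int\frac{r^{2m}v_x^2v_t}{v^{\beta+6}}\,dx+\int\frac{m\,r^{2m-1}u\,v_x^2}{v^{\beta+5}}\,dx .
\]
The last term comes from $(r^{2m})_t=2m\,r^{2m-1}u$. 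The first two terms are exactly the negatives of the two contributions above. The remaining term cancels against the lower-order term $-m\,\frac{r^{2m-1}}{v^{\beta+5}}v_x^2$ in \eqref{2.1}$_2$ once that term is multiplied by $u$. So all capillarity contributions assemble into $-\frac{d}{dt}\int\frac{r^{2m}v_x^2}{2v^{\beta+5}}\,dx$.

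\textbf{Conclusion and justification.} Adding everything yields the energy identity
\[
\frac{d}{dt}\int\Big(\Phi+\frac{u^2}{2}+\frac{r^{2m}v_x^2}{2v^{\beta+5}}\Big)dx+\int\Big(2\tilde{\mu}+\frac{\tilde{\lambda}}{v^\alpha}\Big)\frac{[(r^mu)_x]^2}{v}\,dx=0 .
\]
Integrating in time gives \eqref{2.4}, with $\epsilon_0$ finite because $v_0$ is bounded above and below and $v_{0x}\in L^2_r$.

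The main obstacle is the bookkeeping in the capillarity step: the exact cancellation of the geometric $r^{2m-1}$ term and the correct boundary terms at $x=0$ and at infinity. The boundary contributions at infinity vanish because $u, v_x\in H^1$ and $r^mv_x\in H^1$ decay. If $r^m$ growth makes this delicate, we justify it by cutting off at $x=N$ and letting $N\to\infty$.
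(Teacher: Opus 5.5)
Your proposal is correct and is essentially the paper's proof. You multiply the mass equation by $p(1)-p(v)$ and the momentum equation by $u$, add, and integrate over $[0,t]\times\mathbb{R}^+$; the boundary terms you generate are exactly the components of the paper's flux $R_1$, and you make explicit what the paper leaves implicit, namely that the term $m r^{2m-1}u v_x^2/v^{\beta+5}$ from $(r^{2m})_t=2mr^{2m-1}u$ cancels the geometric term $-m r^{2m-1}v_x^2/v^{\beta+5}$ in the momentum equation.
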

\noindent{\bf Proof.}~~Multiplying $(\ref{2.1})_1$ by $(p(1)-p(v))$,  and  $(\ref{2.1})_2$ by $u$, and then combining the resultant equations, we have
\begin{equation}\label{2.6}
\begin{aligned}
&\left(\Phi(v)+\dfrac{u^2}{2}
+\dfrac{r^{2m}}{2v^{\beta+5}}v_x^2\right)_t+
\left(2\tilde{\mu}+\dfrac{\tilde{\lambda}}{v^\alpha}\right)
\dfrac{\left[(r^m u)_x\right]^2}{v}
= R_{1x},
\end{aligned}
\end{equation}
where
\begin{equation}\label{2.7}
\begin{aligned}
R_1
= {} & - r^m u\, p(v) + r^m u + \left(2\tilde{\mu} + \frac{\tilde{\lambda}}{v^\alpha}\right)
\frac{r^m u\, (r^m u)_x}{v} + (r^mu)_x \frac{r^{2m}}{v^{\beta+5}} v_x \\
& - r^m u \left[
\left(\frac{r^{2m}}{v^{\beta+5}} v_x\right)_x
+ \frac{\beta+5}{2}\frac{r^{2m}}{v^{\beta+6}} v_x^2
\right].
\end{aligned}
\end{equation}
Integrating $(\ref{2.6})$ w.r.t. $t$ and $x$  over $[0,T]\times\mathbb{R}^+$,  and using the boundary condition $(\ref{1.16})$, we get \eqref{2.4} immediately. This completes the proof of Lemma \ref{L2.1}.

Due to \cite{47}, we have a lower bound estimate on  the energy functional  $\Phi(v)$.
\begin{Lemma}\label{L2.2}There exists a uniform positive constant $c_0$, such that for all $v>0$,  it  holds
\begin{eqnarray}\label{2.8}
\rho\Phi(v)\geq c_0\Psi(v):=c_0\frac{(1-v)^2}{1+v}.
\end{eqnarray}
\end{Lemma}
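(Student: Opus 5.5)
The plan is to write everything explicitly in terms of $v$, using $\rho=1/v$, $p(v)=v^{-\gamma}$ and $p(1)=1$. By \eqref{2.5},
\[
\Phi(v)=v-1-\int_1^v s^{-\gamma}\,ds=
\begin{cases}
v-1-\ln v, & \gamma=1,\\[2pt]
v-1-\dfrac{v^{1-\gamma}-1}{1-\gamma}, & \gamma>1,
\end{cases}
\]
so \eqref{2.8} becomes the scalar inequality
\[
G(v):=\frac{\Phi(v)}{v}\,\frac{1+v}{(1-v)^2}\ \ge\ c_0 \qquad (v\neq 1).
\]
Note that $\Phi\ge 0$ with equality only at $v=1$, since $\Phi'(v)=1-v^{-\gamma}$ vanishes only there and $\Phi''(v)=\gamma v^{-\gamma-1}>0$. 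The strategy is to check that $G$ is continuous and positive on $(0,\infty)\setminus\{1\}$, extends continuously to $v=1$, and stays away from $0$ at the endpoints $v\to0^+$ and $v\to\infty$. The constant $c_0$ is then the infimum of $G$.

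\emph{Near $v=1$.} A Taylor expansion gives $\Phi(v)=\tfrac{\gamma}{2}(v-1)^2+O(|v-1|^3)$. Hence $G(v)\to \tfrac{\gamma}{2}\cdot 2=\gamma>0$, and $G$ is bounded below by $\gamma/2$ on a neighborhood $|v-1|\le\delta$.

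\emph{Regime $v\to0^+$.} Here $(1+v)/(1-v)^2\to1$. For $\gamma>1$ we have $\Phi(v)\sim v^{1-\gamma}/(\gamma-1)$, so $\Phi/v\sim v^{-\gamma}/(\gamma-1)\to\infty$. For $\gamma=1$ we have $\Phi/v\sim -\ln v/v\to\infty$. Thus $G\to\infty$, and $G\ge1$ on $(0,\delta']$. On any compact set $[\delta',1-\delta]\cup[1+\delta,V]$, positivity and continuity of $G$ give a positive minimum.

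\emph{Regime $v\to\infty$: the main obstacle.} Here $\Phi(v)=v+O(\ln v+1)$, so $\Phi/v\to1$, while $(1+v)/(1-v)^2\sim 1/v$. Consequently $G(v)\sim 1/v\to0$. This is exactly where the argument cannot close: as worded, with $\rho=1/v$, the weight $\rho$ costs a factor $v$ at large $v$. The correct behaviour is $\Phi\sim v\sim\Psi$, not $\rho\Phi\sim\Psi$.

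What I would actually carry out, therefore, is the following. I would prove \eqref{2.8} with $c_0=c_0(\gamma,V)$ on every range $0<v\le V$ by the compactness argument above. Separately, I would prove the unweighted bound $\Phi(v)\ge c\,\Psi(v)$ on $(0,\infty)$. For that bound, $\Phi/\Psi\to\gamma$ at $v=1$, tends to $\infty$ as $v\to0^+$, and tends to $1$ as $v\to\infty$, so the same three-regime argument applies.

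I would then flag in the text that the literal "for all $v>0$" form of \eqref{2.8} fails as $v\to\infty$, because $G(v)=O(1/v)$ there. The version with a $V$-dependent constant suffices wherever $v\le C_1$ is already known. For the Kanel'-type argument of Lemma \ref{L2.3}, where no upper bound is yet available, I would invoke the unweighted inequality $\Phi\ge c\Psi$ instead.
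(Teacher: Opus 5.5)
Your diagnosis is correct. With $\rho=1/v$ one has $\rho\Phi(v)\to1$ while $\Psi(v)\sim v$ as $v\to\infty$. Hence $\rho\Phi(v)/\Psi(v)=O(1/v)$, no uniform $c_0$ exists, and the inequality as printed is false.

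The paper gives no argument of its own; it only cites \cite{47}. The one place the lemma is used is the step $\|\sqrt{\Psi(v)}\|\le C\|\sqrt{\Phi(v)}\|$ in \eqref{2.14}, and that step requires exactly the unweighted bound $\Phi(v)\ge c_0\Psi(v)$ that your three-regime compactness argument proves. In density variables this bound reads $\rho\Phi(v)\ge c_0\frac{(\rho-1)^2}{\rho+1}=c_0\rho\Psi(v)$, which is presumably what was mistranscribed. The unweighted form is also the natural one, since \eqref{2.4} controls $\int\Phi(v)\,dx$ in Lagrangian coordinates. Your bounded-range weighted variant is harmless but not needed. You are also right that it could not serve in Lemma \ref{L2.3}, because that lemma is where the upper bound on $v$ is first obtained.

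If you prefer an explicit constant to compactness, it is easy to get one. For $\gamma\ge1$,
\[
\Phi(v)-(v-1-\ln v)=\int_1^v(s^{-1}-s^{-\gamma})\,ds\ge0 \quad\text{for all } v>0.
\]
For $v\ge1$, the function $g(v)=\frac v2-\frac1{2v}-\ln v$ satisfies $g(1)=0$ and $g'(v)=\frac{(v-1)^2}{2v^2}\ge0$, so $v-1-\ln v\ge\frac{(v-1)^2}{2v}$. For $0<v\le1$, the series for $-\ln v$ gives $v-1-\ln v\ge\frac{(1-v)^2}{2}$. Together these give $\Phi(v)\ge\frac12\Psi(v)$, so $c_0=\frac12$ works for every $\gamma\ge1$.
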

Based on   Lemma \ref{L2.2}, we now demonstrate the lower and upper bounds of the density  $v(t, x)$ by  using Kanel's approach \cite{28}.
\begin{Lemma}\label{L2.3}  If the parameters $\alpha$, $\beta$ and $\gamma$ satisfy $\alpha\in\mathbb{R},\,-3\leq\beta\leq-2,\,\gamma\ge 1$, then there exists a positive constant
$C_{5}$ depending only on $\alpha, \beta, \gamma, \underline{V},\overline{V}, \|v_0-1\|, \|u_0\|$ and $\|v_{0x}\|_{0,r}$, such that
\begin{eqnarray}\label{2.12}
C_{5}^{-1}\leq v(t,x)\leq C_{5}, \quad \forall \,(t,x) \in [0, T] \times \mathbb{R}^+.
\end{eqnarray}
\end{Lemma}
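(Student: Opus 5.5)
The plan is Kanel's argument, built on the basic energy identity \eqref{2.4}. Since $r\ge a>0$ along particle paths, Lemma \ref{L2.1} gives, uniformly in $t\in[0,T]$,
\begin{equation*}
\int_{\mathbb{R}^+}\Phi(v)\,dx+\frac{a^{2m}}{2}\int_{\mathbb{R}^+}\frac{v_x^2}{v^{\beta+5}}\,dx\le \epsilon_0 .
\end{equation*}
Here $\epsilon_0$ depends only on $\underline{V},\overline{V},\|v_0-1\|,\|u_0\|,\|v_{0x}\|_{0,r}$, because $\Phi(v_0)\le C(\underline V,\overline V)(v_0-1)^2$ and $v_0^{-\beta-5}$ is bounded. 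Lemma \ref{L2.2} then gives $\int \frac{(1-v)^2}{v(1+v)}dx\le C\epsilon_0$, with $\rho=1/v$.

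\textbf{Kanel function.} Set
\begin{equation*}
G(v):=\int_1^v \sqrt{\frac{\Phi(s)}{s^{-\beta-5}}}\,ds=\int_1^v s^{\frac{\beta+5}{2}}\sqrt{\Phi(s)}\,ds .
\end{equation*}
By Cauchy--Schwarz and the decay $v\to1$ as $x\to\infty$ (since $v-1\in H^1$),
\begin{equation*}
|G(v(t,x))|\le\int_x^\infty \sqrt{\Phi(v)}\,\frac{|v_y|}{v^{\frac{\beta+5}{2}}}\,dy
\le \Big(\int\Phi\Big)^{1/2}\Big(\int \frac{v_x^2}{v^{\beta+5}}\Big)^{1/2}\le C\epsilon_0 .
\end{equation*}
This holds for every $(t,x)$, so it suffices to show $G(v)\to+\infty$ both as $v\to0^+$ and as $v\to\infty$. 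The uniform bound on $|G(v)|$ then traps $v$ in a compact subinterval of $(0,\infty)$ whose endpoints depend only on the listed quantities.

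\textbf{Asymptotics.} Since $\gamma\ge1$, we have $\Phi(s)\sim s$ as $s\to\infty$. Hence the integrand of $G$ behaves like $s^{\frac{\beta+6}{2}}$, and $G\to\infty$ provided $\frac{\beta+6}{2}\ge-1$, i.e.\ $\beta\ge-8$. This holds because $\beta\ge-3$. As $s\to0^+$:
\begin{itemize}
\item if $\gamma>1$, then $\Phi(s)\sim \frac{s^{1-\gamma}}{\gamma-1}$, so the integrand behaves like $s^{\frac{\beta+6-\gamma}{2}}$, and divergence needs $\frac{\beta+6-\gamma}{2}\le -1$, i.e.\ $\gamma\ge \beta+8$;
\item if $\gamma=1$, then $\Phi(s)\sim-\ln s$, and the integrand behaves like $s^{\frac{\beta+5}{2}}\sqrt{-\ln s}$.
\end{itemize}
So the pure Korteweg Kanel function diverges at $0$ only if $\beta\le-7$ (when $\gamma=1$) or $\gamma\ge\beta+8$. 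That is not the range $-3\le\beta\le-2$, so the above choice of weight is wrong for the lower bound and must be changed.

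\textbf{Better weight.} The energy actually controls $\int \frac{v_x^2}{v^{\beta+5}}$ together with $\int\Psi(v)$, where $\Psi(v)=\frac{(1-v)^2}{1+v}$ in the $v$-variable, $\Psi\sim 1/v$ near $0$ via Lemma \ref{L2.2}. Instead of $\Phi$, I would pair $v_x$ with a weight tailored to each end.

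For the \emph{upper bound}, work with $\int v^{-\frac{\beta+5}{2}}|v_x||v-1|\,dx$. This bounds $\int_1^{v}s^{-\frac{\beta+5}{2}}|s-1|\,ds\sim v^{\frac{1-\beta}{2}}$, which diverges as $v\to\infty$ because $\beta\le-2<1$. Since $\int (v-1)^2$ is not directly bounded, split into $\{v\le2\}$ and $\{v>2\}$, using $\Phi(v)\ge cv$ on the latter.

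For the \emph{lower bound}, use $\int v^{-\frac{\beta+5}{2}}|v_x|\sqrt{\Phi(v)}$ with $\Phi(v)\gtrsim v^{1-\gamma}$ or $\Phi(v)\gtrsim|\ln v|$. I expect this to be the main obstacle: the integrand near $0$ is $s^{-\frac{\beta+5}{2}}s^{\frac{1-\gamma}{2}}$ (recomputing the exponent sign, since $\kappa(v)=v^{-\beta}$ gives weight $v^{-\beta-5}$ in the denominator). The step to check is that $\beta\ge-3$ makes $-\frac{\beta+5}{2}\le -1$. That yields divergence at $0$ even before the $\Phi$ factor, since $\sqrt{\Phi}\ge c>0$ near $0$ and $\int_0 s^{-1}\,ds=\infty$. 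The restriction $\beta\le-2$ should likewise be what keeps the integrand growing enough at infinity for the upper bound. After verifying these exponent conditions carefully, the two-sided bound \eqref{2.12} follows with $C_5$ depending only on $\epsilon_0$, $\alpha$, $\beta$, $\gamma$.
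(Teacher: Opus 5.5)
Your proposal has a genuine gap in the upper bound, and it comes from a sign slip at the start. The energy controls $\int v_x^2/v^{\beta+5}\,dx$, so the Kanel function must satisfy $G'(s)=\sqrt{\Phi(s)}\,s^{-\frac{\beta+5}{2}}$. You defined instead $G'(s)=\sqrt{\Phi(s)/s^{-\beta-5}}=s^{\frac{\beta+5}{2}}\sqrt{\Phi(s)}$. That definition is inconsistent with the Cauchy--Schwarz display you wrote next, and your ``Asymptotics'' paragraph (thresholds $\beta\ge-8$, $\gamma\ge\beta+8$) analyzes this wrong function. That is why you wrongly discarded the single Kanel function.

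With the correct sign, $G(v)=\int_1^v s^{-\frac{\beta+5}{2}}\sqrt{\Phi(s)}\,ds$ works at both ends:
\begin{itemize}
\item Since $\Phi(s)\sim s$ as $s\to\infty$ (for $\gamma\ge1$), the integrand behaves like $s^{-\frac{\beta+4}{2}}$. This is non-integrable at infinity exactly when $\beta\le-2$.
\item Since $\Phi(s)\to+\infty$ as $s\to0^+$, the integrand is at least $c\,s^{-\frac{\beta+5}{2}}$. This is non-integrable at $0$ exactly when $\beta\ge-3$.
\end{itemize}
This is the paper's proof. Its $\overline{\Upsilon}$ uses $\sqrt{\Psi}$ in place of $\sqrt{\Phi}$, which is equivalent because $\Psi\le C\Phi$. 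It also explains the range $-3\le\beta\le-2$: your Kanel estimate bounds $|G(v(t,x))|$ uniformly, and $G$ blows up at both $0$ and $\infty$.

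Your lower-bound paragraph recovers this correctly. Your upper-bound replacement does not work. Pairing $v^{-\frac{\beta+5}{2}}|v_x|$ with $|v-1|$ needs a bound on $\int (v-1)^2\,dx$. On $\{v>2\}$ the energy only gives $\int_{\{v>2\}}v\,dx\le C\epsilon_0$, because $\Phi(v)\sim v$ there. It does not control $\int_{\{v>2\}}v^2\,dx$, and splitting the domain does not supply this missing $L^2$ control.

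The exponent is also off: $\int_1^v s^{-\frac{\beta+5}{2}}(s-1)\,ds\sim v^{-\frac{\beta+1}{2}}$, not $v^{\frac{1-\beta}{2}}$. The resulting divergence threshold would be $\beta<-1$. That this does not match the hypothesis $\beta\le-2$ signals that $|v-1|$ is not the weight the energy dictates. The fix is to use $\sqrt{\Phi(v)}\sim\sqrt{v}$ at infinity as well, i.e.\ the single $G$ above.

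Two side claims are also wrong, though neither is needed once you work with $\sqrt{\Phi}$ directly:
\begin{itemize}
\item $\Psi(v)=\frac{(1-v)^2}{1+v}\to1$ as $v\to0$; it is not $\sim 1/v$.
\item The bound $\int\frac{(1-v)^2}{v(1+v)}\,dx\le C\epsilon_0$ you attribute to Lemma~\ref{L2.2} does not follow from the energy when $1\le\gamma<2$, since near $v=0$ the function $\Phi(v)$ grows more slowly than $1/v$.
\end{itemize}
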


\noindent{\bf Proof:}~~Firstly, from Lemma \ref{L2.1}, we obtain the basic energy estimate \eqref{2.4}. Then we define
\begin{eqnarray*}
\overline{\Upsilon}(v)=\int_1^{v}\frac{\sqrt{\Psi(s)}s}{\frac{\beta+5}{2}}ds,
\end{eqnarray*}
so it is easy  to verify that there exist two positive constants $A_1, A_2 > 0$ such that if $-3\leq\beta\leq-2$,
\begin{eqnarray}\label{2.13}
|\overline{\Upsilon}(v)|\geq\left\{\begin{array}{ll}
          A_1\left(v^{-\frac{\beta+3}{2}}+\ln{\frac{1}{v}}\right)-A_2,~~~v\rightarrow 0,\\[2mm]
          A_1\left(v^{-\frac{\beta+2}{2}}+\ln{v}\right)-A_2,~~~v\rightarrow +\infty.
\end{array}\right.
\end{eqnarray}
On the other hand, we deduce  from $(\ref{2.4})$ and $(\ref{2.8})$ that
\begin{eqnarray}\label{2.14}
\left|\overline{\Upsilon}(v)(t,x))\right|=&&\left|\int_x^{+\infty}\frac{\partial}{\partial x} \overline{\Upsilon}(v)(t,x)dy\right|=\left|\int_x^{+\infty}\frac{\sqrt{\Psi(v)}}{v^{\frac{\beta+5}{2}}}v_ydy\right|\nonumber\\
\leq&&\int_0^{+\infty}\sqrt{\Psi(v)}\left|\frac{v_x}{v^{\frac{\beta+5}{2}}}\right|dy\\
\leq&&\left\|\sqrt{\Psi(v)}(t)\right\|\cdot\left\|\frac{v_x}{v^{\frac{\beta+5}{2}}}(t)\right\|\nonumber\\
\leq&&C\left\|\sqrt{\Phi(v)}(t)\right\|\cdot\left\|\frac{v_x}{v^{\frac{\beta+5}{2}}}(t)\right\|\nonumber\\
\leq&&C.\notag
\end{eqnarray}
From (\ref{2.13})-(\ref{2.14}), we see that  $(\ref{2.12})$ holds  for $-3\leq\beta\leq-2$. The proof of Lemma \ref{L2.3} is thus finished.
\begin{Remark}
From the proof of Lemma \ref{L2.3}, we see that if $\beta\leq-2$,  then $v(t,x)\leq C_{5}^{-1}$ for all $(t,x)\in[0,T]\times\mathbb{R}^+$, and if $\beta\geq-3$,  then $v(t,x)\geq C_{5}$ for all $(t,x)\in[0,T]\times\mathbb{R}^+$, where $C_{5}$ is a positive constant given in Lemma \ref{L2.3}.
\end{Remark}

Next, we will prove a lemma concerning the estimate of  $\int_{\mathbb{R}^+}\dfrac{v_x^2}{v^{2\alpha+2}}dx$.
\begin{Lemma}\label{L2.4} (1) If the parameters $\alpha$ and $\beta$ satisfy $-3\leq\beta\leq-2,\,\,\alpha\in\mathbb{R},$ or $\dfrac{-3-\sqrt{-2\beta^2-22\beta-59}}{2}\leq\alpha\leq\beta+3,\,\,-5\leq\beta\leq-\dfrac{14}{3}$, then it hold for $t\in[0,T]$ that
\begin{eqnarray}\label{2.15}
\int_{\mathbb{R}^+}\left(\dfrac{v_x^2}{v^{2\alpha+2}}+\frac{v_x^2}{v^{\alpha+2}}\right)dx+\int_0^t\int_{\mathbb{R}^+}\left(\frac{ v_x^2}{v^{\alpha+\gamma+2}}+\frac{r^{2m}}{v^{\alpha+\beta+6}}v_{xx}^2\right)dxd\tau\leq C_6(T),
\end{eqnarray}
(2) If the parameters $\alpha$ and $\beta$ satisfy $\dfrac{\beta+2-\sqrt{-2(\beta+2)(\beta+5)}}{2}\leq\alpha\leq\beta+3,\,\,\dfrac{-7-\sqrt{3}}{2}\leq\beta<-3$, then for all $t\in[0,T]$, we have
\begin{eqnarray}\label{2.16}
\int_{\mathbb{R}^+}\left(\dfrac{v_x^2}{v^{2\alpha+2}}+\frac{v_x^2}{v^{\alpha+2}}\right)dx+\int_0^t\int_{\mathbb{R}^+}\left(\frac{ v_x^2}{v^{\alpha+\gamma+2}}+\left[\left(\frac{r^m}{v^{\frac{\alpha+\beta+6}{2}}}v_x\right)_x\right]^2\right)dxd\tau\leq C_7(T).
\end{eqnarray}

Here $C_6(T), C_7(T)$ are two positive constants depending only on $T, \alpha, \beta, \gamma, \underline{V}, \overline{V}, \|v_0-1\|_1, \|u_0\|$ and $\|v_{0x}\|_{0,r}$.
\end{Lemma}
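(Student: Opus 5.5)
The plan is the standard ``effective velocity'' (Kazhikhov/BD-type) argument. The term $\tilde{\lambda}\,v^{-\alpha-1}(r^mu)_x = \tilde{\lambda}\,v^{-\alpha-1}v_t$ is an exact time derivative of a function of $v$. For $\alpha\neq 0$ we have
\[
\frac{v_t}{v^{\alpha+1}}=-\frac{1}{\alpha}\bigl(v^{-\alpha}\bigr)_t ,
\]
and for $\alpha=0$ it equals $(\ln v)_t$. Likewise $2\tilde{\mu}\,(r^mu)_x/v=2\tilde{\mu}(\ln v)_t$. Hence I introduce
\[
w:=\frac{u}{r^m}-\Bigl(2\tilde{\mu}\ln v-\frac{\tilde{\lambda}}{\alpha}v^{-\alpha}\Bigr)_x
\qquad(\text{with } \ln v \text{ in place of } v^{-\alpha}/\alpha \text{ if } \alpha=0).
\]
Dividing \eqref{2.1}$_2$ by $r^m$ and using $(r^{-m})_t=-mur^{-m-1}$, one gets an equation of the form
\[
w_t+p(v)_x+\Bigl(\frac{r^{2m}}{v^{\beta+5}}v_x\Bigr)_{xx}+\frac{\beta+5}{2}\Bigl(\frac{r^{2m}}{v^{\beta+6}}v_x^2\Bigr)_x=-\frac{mu^2}{r^{m+1}}-m\frac{r^{m-1}}{v^{\beta+5}}v_x^2 .
\]
I will multiply this by $w$ and integrate over $[0,t]\times\mathbb{R}^+$. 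The boundary terms vanish because $u=v_x=0$ at $x=0$; the $v_{xxx}$-type boundary term is handled through the equation together with the compatibility conditions. Since $\|u/r^m\|\le C$ by \eqref{2.4} and $r\ge a$, bounding $\|w(t)\|$ gives $\|(\tilde\lambda v^{-\alpha}+2\tilde\mu)v_x/v\|$, hence $\int v_x^2v^{-2\alpha-2}dx$ and $\int v_x^2v^{-\alpha-2}dx$ (the latter by Cauchy--Schwarz between the $\tilde\lambda$ and $\tilde\mu$ parts, with a sign discussion on $\tilde\lambda$).

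The pressure term produces the dissipation $\int_0^t\!\int \gamma\tilde{\lambda}v_x^2 v^{-\alpha-\gamma-2}$, since $p'(v)=-\gamma v^{-\gamma-1}$; its cross term with $u/r^m$ is absorbed by Young's inequality and \eqref{2.4}. The key good term comes from the Korteweg part. After integrating by parts, $\int (r^{2m}v^{-\beta-5}v_x)_{xx}\,\tilde\lambda v^{-\alpha-1}v_x$ gives, up to lower order,
\[
\int \frac{r^{2m}}{v^{\alpha+\beta+6}}v_{xx}^2 ,
\]
together with the quartic term $\int r^{2m}v_x^4 v^{-\alpha-\beta-8}$ with coefficients that are polynomials in $\alpha,\beta$. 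Terms with $(r^{2m})_x=2mr^{m-1}v$ carry $r^{-1}\le a^{-1}$ and are lower order.

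I will treat the two cases as follows.
\begin{itemize}
\item[(1)] For $-3\le\beta\le-2$, Lemma \ref{L2.3} already gives $C_5^{-1}\le v\le C_5$. All powers of $v$ are then harmless, and the quartic term is controlled by Gagliardo--Nirenberg,
\[
\int r^{2m}v_x^4\le C\|v_x\|_{L^\infty}^2\|r^mv_x\|^2\le C\|v_x\|\,\|v_x\|_1\,\epsilon_0 ,
\]
then absorbed into the $v_{xx}^2$ dissipation, and Gronwall closes the bound with a $T$-dependent constant.
\item[(2)] For $\beta<-3$ no $L^\infty$ bound on $v$ is yet available, so the quadratic form in $(v_{xx},\,v_x^2/v)$ must be examined. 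Completing the square as $\bigl[(r^mv^{-(\alpha+\beta+6)/2}v_x)_x\bigr]^2$ leaves a residual quartic coefficient. Its nonnegativity (or the discriminant condition) is exactly $2\alpha^2-2(\beta+2)\alpha+\dots$, i.e. $\alpha>\frac{\beta+2-\sqrt{-2(\beta+2)(\beta+5)}}{2}$ in case (2), and the analogous condition $\alpha>\frac{-3-\sqrt{-2\beta^2-22\beta-59}}{2}$ in the second subcase of part (1), where one keeps $v_{xx}^2$ instead. This is where the $\beta$-ranges come from: they are needed for the square roots to be real and for the intervals to be nonempty.
\end{itemize}
The $\tilde{\mu}$-part of $w$ produces $\int_0^t\!\int\bigl(\tilde\mu(r^mu)_xv_x^2v^{-\alpha-3}-\tilde\mu(r^mu)_xv_{xx}v^{-\alpha-2}\bigr)$. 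These terms are bounded by Young's inequality, using the viscous dissipation $\int_0^t\!\int[(r^mu)_x]^2/v$ from \eqref{2.4} against $v_{xx}^2r^{2m}v^{-\alpha-\beta-6}$ and $v_x^4$ terms. Matching the powers of $v$ there requires $v^{-2\alpha-4+\alpha+\beta+6+1}$ to be bounded; since $v$ is bounded above (Remark after Lemma \ref{L2.3}, valid for $\beta\le-2$), this yields the restriction $\alpha\le\beta+3$.

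The main obstacle is controlling the remaining quartic term $\int_0^t\!\int r^{2m}v_x^4v^{-\alpha-\beta-8}$ when only the upper bound of $v$ is known. My plan is to write
\[
\int r^{2m}\frac{v_x^4}{v^{\alpha+\beta+8}}\le \Bigl\|\frac{r^m v_x}{v^{(\alpha+\beta+6)/2}}\Bigr\|_{L^\infty}^2\int\frac{v_x^2}{v^{\beta+5}}\cdot v^{\beta+5-\alpha-\beta-8+\alpha+\beta+6}.
\]
The energy bound on $\int v_x^2v^{-\beta-5}$ from \eqref{2.4} handles the integral factor. For the $L^\infty$ factor I use the 1D Sobolev inequality
\[
\|f\|_{L^\infty}^2\le 2\|f\|\,\|f_x\|, \qquad f=r^mv^{-(\alpha+\beta+6)/2}v_x ,
\]
so that one factor is absorbed into the $(f_x)^2$ dissipation by Young's inequality and the other is bounded by $\int v_x^2v^{-2\alpha-2}$ times powers of $v$. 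These powers are bounded using the upper bound of $v$ together with $\alpha\le\beta+3$. Gronwall's inequality then yields \eqref{2.15}/\eqref{2.16}. I expect the exponent bookkeeping here, in particular verifying that every leftover power of $v$ is nonnegative, to be the delicate step.
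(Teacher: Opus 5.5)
Your bullet (1) agrees with the paper's Case 1. The step you single out as the main obstacle, however, fails, and it is exactly where the cases with $\beta<-3$ are decided.

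\textbf{The quartic-term estimate does not close.} For $\beta<-3$ only $v\le C_5$ is known. Your splitting leaves the factor $\int v_x^2v^{-2}\,dx=\int v_x^2v^{-\beta-5}\,v^{\beta+3}\,dx$ with $\beta+3<0$. Because $\alpha\le\beta+3<0$, every quantity you control near $v=0$ (namely $v_x^2v^{-\beta-5}$, $v_x^2v^{-2\alpha-2}$, $v_x^2v^{-\alpha-2}$) carries a weight $v^{-s}$ with $s<2$. So the leftover power of $v$ blows up as $v\to0$, and the upper bound on $v$ does not help. In addition, $\|r^mv_xv^{-(\alpha+\beta+6)/2}\|^2$ carries the weight $r^{2m}$, which is unbounded on the exterior domain, so it cannot be controlled by the unweighted Gronwall quantity $\int v_x^2v^{-2\alpha-2}\,dx$.

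The paper never estimates this quartic term when $\beta<-3$. The parameter restrictions are precisely:
\begin{itemize}
\item[(a)] $f_1(\alpha,\beta)>0$ in the second subcase of (1), where the whole $v_{xx}v_x^2$ cross term is integrated by parts onto $v_x^4$;
\item[(b)] $f_1'(\alpha,\beta)>0$ in part (2), where the square $\bigl[(r^mv^{-(\alpha+\beta+6)/2}v_x)_x\bigr]^2$ is completed.
\end{itemize}
Under these conditions $f_1\,r^{2m}v_x^4v^{-\alpha-\beta-8}$ (resp.\ $f_1'\,r^{2m}v_x^4v^{-\alpha-\beta-8}$) is a dissipative term. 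Half of it absorbs, via Young's inequality, the viscous error $\tilde\mu(r^mu)_xv_x^2v^{-\alpha-3}$. This leaves $[(r^mu)_x]^2r^{-2m}v^{\beta+2-\alpha}\le C[(r^mu)_x]^2/v$, and that last inequality is exactly where $v\le C_5$ and $\alpha\le\beta+3$ are used. You state the sign condition in your bullet (2), but it must be used to make the quartic term a good term, not followed by an attempt to bound that term as an error.

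\textbf{Your multiplier is inconsistent with the bookkeeping you use.} Because $w$ contains $2\tilde\mu\ln v$, the entire viscous term is absorbed into $\frac12(w^2)_t$. So the terms $\tilde\mu(r^mu)_xv_x^2v^{-\alpha-3}$ and $\tilde\mu(r^mu)_xv_{xx}v^{-\alpha-2}$, from which you derive $\alpha\le\beta+3$, never arise. Your derivation is correct for the paper's scheme, but those terms appear only when the multiplier is $-r^{-m}v_x/v^{\alpha+1}$ alone. That is what the paper uses; the $u_t$ term then produces $\bigl(uv_x/(r^mv^{\alpha+1})\bigr)_t$, which is handled by the Cauchy inequality and \eqref{2.4}.

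In your scheme, the component $2\tilde\mu v_x/v$ of $w$ instead meets the Korteweg term. It produces the same quadratic form in $(v_{xx},v_x^2/v)$ with $\alpha$ replaced by $0$ and weight $r^{2m}v^{-\beta-6}$. For that form, complete a square $(v_{xx}-q\,v_x^2/v)^2$, integrate the remaining cross term by parts, and optimize over $q$. The best quartic coefficient left is $-(\beta+1)(\beta+4)/18$, which is negative for $\beta<-4$, i.e.\ on the whole second subcase of (1) and on part of (2). Since $\alpha<0$ there, this $\tilde\mu$-contribution dominates the $\tilde\lambda$-contribution (weight $v^{-\alpha-\beta-6}$) as $v\to0$. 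Your scheme therefore cannot close without a lower bound on $v$. The multiplier must be the $\tilde\lambda$-part of the effective velocity only, with the $\tilde\mu$ viscous term treated as a perturbation, as in the paper.
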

\noindent{\bf Proof:}~~Multiplying $(\ref{2.1})_1$ by $r^{-m}\dfrac{v_x}{v^{\alpha+1}}$, on the one hand, we obtain
\begin{eqnarray}\label{2.17}
\left(\dfrac{\tilde{\lambda}v_x^2}{2v^{2\alpha+2}}+2\tilde{\mu}\frac{v_x^2}{v^{\alpha+2}}-\frac{uv_x}{r^mv^{\alpha+1}}\right)_t+\gamma\dfrac{v_x^2}{v^{\alpha+\gamma+2}}+\frac{r^{2m}}{v^{\alpha+\beta+6}}v_{xx}^2=R_{2x}+H_1,
\end{eqnarray}
where
\begin{eqnarray*}
\begin{split}
R_2=&-\frac{uv_t}{r^mv^{\alpha+1}}+2\tilde{\mu}\frac{(r^mu)_xv_x}{v^{\alpha+2}}+\left(\frac{r^{2m}v_x}{v^{\beta+5}}\right)_x\frac{v_x}{v^{\alpha+1}}+\frac{\alpha+2\beta+11}{3}\frac{r^{2m}}{v^{\alpha+\beta+7}}v_x^3-m\frac{r^{m-1}}{v^{\alpha+\beta+5}}v_x^2,\\
H_1=&m(m-1)\frac{v_x^2}{v^{\alpha+\beta+4}r^2}+\frac{\left[(r^mu)_x\right]^2}{r^{2m}v^{\alpha+1}}-2m\frac{u(r^mu)_x}{r^{2m+1}v^\alpha}+m\frac{u^2v_x}{r^{m+1}v^{\alpha+1}}+2\tilde{\mu}\frac{(r^mu)_xv_x^2}{v^{\alpha+3}}-2\tilde{\mu}\frac{(r^mu)_xv_{xx}}{v^{\alpha+2}}\\
&-f_1(\alpha,\beta)\frac{r^{2m}}{v^{\alpha+\beta+8}}v_x^4-f_2(m,\alpha,\beta)\frac{r^{m-1}}{v^{\alpha+\beta+6}}v_x^3,
\end{split}
\end{eqnarray*}
with
\begin{eqnarray*}
\begin{split}
f_1(\alpha,\beta)=&\frac{(\beta+5)(\alpha+\beta+7)}{6}+\frac{(\alpha+1)(\beta+5)}{2}-\frac{(\alpha+\beta+6)(\alpha+\beta+7)}{3},\\
f_2(m,\alpha,\beta)=&-m(2\alpha+3)+\frac{2m(\alpha+\beta+6)}{3}+m(\alpha+\beta+5)-\frac{m(\beta+5)}{3}.
\end{split}
\end{eqnarray*}
Integrating $(\ref{2.17})$ w.r.t. $t$ and $x$  over $[0,T]\times\mathbb{R}^+$,  and using the boundary condition $(\ref{1.16})$, we get
\begin{align}\label{2.18}
&\int_{\mathbb{R}^+}\left(\dfrac{\tilde{\lambda}v_x^2}{4v^{2\alpha+2}}+2\tilde{\mu}\frac{v_x^2}{v^{\alpha+2}}\right)dx+\int_0^t\int_{\mathbb{R}^+}\left(\frac{\gamma v_x^2}{v^{\alpha+\gamma+2}}+\frac{r^{2m}}{v^{\alpha+\beta+6}}v_{xx}^2\right)dxd\tau\notag\\
\leq&\int_{\mathbb{R}^+}\left(\dfrac{3\tilde{\lambda}v_{0x}^2}{2v_0^{2\alpha+2}}+2\tilde{\mu}\frac{v_{0x}^2}{v_0^{\alpha+2}}\right)dx+C\int_{\mathbb{R}^+}\left(u_0^2+u^2\right)dx+\int_0^t\int_{\mathbb{R}^+}H_1dxd\tau,
\end{align}
where we have used the following Cauchy inequality:
\begin{eqnarray}\label{2.19}
\int_0^t\int_{\mathbb{R}^+}\left(\frac{uv_x}{r^mv^{\alpha+1}}\right)_tdxd\tau\leq\int_{\mathbb{R}^+}\left(\dfrac{\tilde{\lambda}v_x^2}{4v^{2\alpha+2}}+\dfrac{\tilde{\lambda}v_{0x}^2}{v_0^{2\alpha+2}}\right)dx+C\int_{\mathbb{R}^+}\left(u^2+u_0^2\right)dx.
\end{eqnarray}
Next, we estimate the nonlinear terms $\int_0^t\int_{\mathbb{R}^+}H_1dxd\tau$.

Case 1: $-3\leq\beta\leq-2,\,\,\alpha\in\mathbb{R}$, from Lemma \ref{L2.3}, we have $C_{5}^{-1}\leq v(t,x)\leq C_{5}$.
\begin{align}\label{2.20}
&\int_0^t\int_{\mathbb{R}^+}H_1dxd\tau\notag\\
\leq&\frac{1}{8}\int_0^t\int_{\mathbb{R}^+}\left(\gamma\dfrac{ v_x^2}{v^{\alpha+\gamma+2}}+\frac{r^{2m}}{v^{\alpha+\beta+6}}v_{xx}^2\right)dxd\tau+C\int_0^t\int_{\mathbb{R}^+}\left(\frac{\left[(r^mu)_x\right]^2}{v}+\frac{r^{2m}v_x^2}{v^{\beta+5}}+u^2\right)dxd\tau\notag\\
&+C\int_0^t\int_{\mathbb{R}^+}\left(u^4+r^{2m}v_x^4\right)dxd\tau\\
\leq&\frac{1}{8}\int_0^t\int_{\mathbb{R}^+}\left(\gamma\dfrac{ v_x^2}{v^{\alpha+\gamma+2}}+\frac{r^{2m}}{v^{\alpha+\beta+6}}v_{xx}^2\right)dxd\tau+C\int_0^t\int_{\mathbb{R}^+}\left(\frac{\left[(r^mu)_x\right]^2}{v}+\frac{r^{2m}v_x^2}{v^{\beta+5}}+u^2\right)dxd\tau\notag\\
&+C\int_0^t\|\frac{r^mu}{r^m}(\tau)\|_{L^\infty}^2\int_{\mathbb{R}^+}u^2dxd\tau+C\int_0^t\|v_x(\tau)\|_{L^\infty}^2\int_{\mathbb{R}^+}r^{2m}v_x^2dxd\tau\notag\\
\leq&\frac{1}{4}\int_0^t\int_{\mathbb{R}^+}\left(\gamma\dfrac{ v_x^2}{v^{\alpha+\gamma+2}}+\frac{r^{2m}}{v^{\alpha+\beta+6}}v_{xx}^2\right)dxd\tau+C\int_0^t\int_{\mathbb{R}^+}\left(\frac{\left[(r^mu)_x\right]^2}{v}+\frac{r^{2m}v_x^2}{v^{\beta+5}}+u^2\right)dxd\tau\notag.
\end{align}
Hence, we have \eqref{2.15} from Lemma $\ref{L2.1}$, combining \eqref{2.18}-\eqref{2.20}, and applying Gronwall's inequality.
\\[2mm]

Case 2: we let $f_1(\alpha,\beta)>0$, then we can get $\dfrac{-3-\sqrt{-2\beta^2-22\beta-59}}{2}<\alpha<\dfrac{-3+\sqrt{-2\beta^2-22\beta-59}}{2}$, where we need $\dfrac{-11-\sqrt{3}}{2}\leq\beta\leq\dfrac{-11+\sqrt{3}}{2}$.  We know $\beta<-3$, so from Remark 2.1, we have $v(t,x)\leq C_{5}$.
\begin{align}\label{2.21}
\int_0^t\int_{\mathbb{R}^+}H_1dxd\tau\leq&\frac{1}{8}\int_0^t\int_{\mathbb{R}^+}\left(\gamma\dfrac{ v_x^2}{v^{\alpha+\gamma+2}}+\frac{r^{2m}}{v^{\alpha+\beta+6}}v_{xx}^2\right)dxd\tau-\frac{f_1(\alpha,\beta)}{2}\int_0^t\int_{\mathbb{R}^+}\frac{r^{2m}}{v^{\alpha+\beta+8}}v_x^4dxd\tau\notag\\
&+C\int_0^t\int_{\mathbb{R}^+}\left(\frac{\left[(r^mu)_x\right]^2}{v^{\alpha-\beta-2}}+\frac{\left[(r^mu)_x\right]^2}{v^{\alpha+1}}+\frac{r^{2m}v_x^2}{v^{\beta+5}}+\frac{v_x^2}{v^{2\alpha+2}}+u^2+u^4\right)dxd\tau\notag\\
\leq&\frac{1}{8}\int_0^t\int_{\mathbb{R}^+}\left(\gamma\dfrac{ v_x^2}{v^{\alpha+\gamma+2}}+\frac{r^{2m}}{v^{\alpha+\beta+6}}v_{xx}^2\right)dxd\tau-\frac{f_1(\alpha,\beta)}{2}\int_0^t\int_{\mathbb{R}^+}\frac{r^{2m}}{v^{\alpha+\beta+8}}v_x^4dxd\tau\\
&+C\int_0^t\int_{\mathbb{R}^+}\left(\frac{\left[(r^mu)_x\right]^2}{v^{\alpha+1}}+\frac{r^{2m}v_x^2}{v^{\beta+5}}+\frac{v_x^2}{v^{2\alpha+2}}+u^2\right)dxd\tau\notag\\
&+C\int_0^t\|\frac{r^mu}{r^m}(\tau)\|_{L^\infty}^2\int_{\mathbb{R}^+}u^2dxd\tau\notag\\
\leq&\frac{1}{8}\int_0^t\int_{\mathbb{R}^+}\left(\gamma\dfrac{ v_x^2}{v^{\alpha+\gamma+2}}+\frac{r^{2m}}{v^{\alpha+\beta+6}}v_{xx}^2\right)dxd\tau-\frac{f_1(\alpha,\beta)}{2}\int_0^t\int_{\mathbb{R}^+}\frac{r^{2m}}{v^{\alpha+\beta+8}}v_x^4dxd\tau\notag\\
&+C\int_0^t\int_{\mathbb{R}^+}\left(\frac{\left[(r^mu)_x\right]^2}{v^{\alpha+1}}+\frac{r^{2m}v_x^2}{v^{\beta+5}}+\frac{v_x^2}{v^{2\alpha+2}}+u^2\right)dxd\tau,\notag
\end{align}
where we need $\alpha\leq\beta+3$, then same as case 1 combining \eqref{2.18}-\eqref{2.21}, and applying Gronwall's inequality, when $\dfrac{-3-\sqrt{-2\beta^2-22\beta-59}}{2}\leq\alpha\leq\beta+3,\,\,-5\leq\beta\leq\-\dfrac{14}{3}$, we can obtain \eqref{2.15}.

On the other hand, Multiplying $(\ref{2.1})_1$ by $r^{-m}\frac{v_x}{v^{\alpha+1}}$, we obtain
\begin{eqnarray}\label{2.22}
\left(\dfrac{\tilde{\lambda}v_x^2}{2v^{2\alpha+2}}+2\tilde{\mu}\frac{v_x^2}{v^{\alpha+2}}-\frac{uv_x}{r^mv^{\alpha+1}}\right)_t+\gamma\dfrac{v_x^2}{v^{\alpha+\gamma+2}}+\left[\left(\frac{r^m}{v^{\frac{\alpha+\beta+6}{2}}}v_x\right)_x\right]^2=R_{3x}+H_2,
\end{eqnarray}
where
\begin{align*}
R_3=&-\frac{uv_t}{r^mv^{\alpha+1}}+2\tilde{\mu}\frac{(r^mu)_yv_x}{v^{\alpha+2}}+\left(\frac{r^{2m}v_x}{v^{\beta+5}}\right)_x\frac{v_x}{v^{\alpha+1}}+\frac{\beta+5}{3}\frac{r^{2m}}{v^{\alpha+\beta+7}}v_x^3,\\
H_2=&m^2\frac{v_x^2}{v^{\alpha+\beta+4}r^2}+\frac{\left[(r^mu)_x\right]^2}{r^{2m}v^{\alpha+1}}-2m\frac{u(r^mu)_x}{r^{2m+1}v^\alpha}+m\frac{u^2v_x}{r^{m+1}v^{\alpha+1}}-2\tilde{\mu}\frac{(r^mu)_x}{v\frac{\alpha-\beta-2}{2}r^m}\left(\frac{r^m}{v^{\frac{\alpha+\beta+6}{2}}}v_x\right)_x\\
&+2m\tilde{\mu}\frac{(r^mu)_xv_x}{r^{m+1}v^{\alpha+1}}-\tilde{\mu}\left(\alpha+\beta+4\right)\frac{(r^mu)_xv_x^2}{v^{\alpha+3}}-f'_1(\alpha,\beta)\frac{r^{2m}}{v^{\alpha+\beta+8}}v_x^4-f'_2(m,\alpha,\beta)\frac{r^{m-1}}{v^{\alpha+\beta+6}}v_x^3,
\end{align*}
with
\begin{eqnarray*}
\begin{split}
f'_1(\alpha,\beta)=&\frac{(\beta+5)(\alpha+\beta+7)}{6}+\frac{(\alpha+1)(\beta+5)}{2}-\left(\frac{\alpha+\beta+6}{2}\right)^2,\\
f'_2(m,\alpha,\beta)=&m(\alpha+\beta+5)-2m(\alpha+1)-\frac{m(\beta+5)}{3}.
\end{split}
\end{eqnarray*}
Integrating $(\ref{2.22})$ w.r.t. $t$ and $x$  over $[0,T]\times\mathbb{R}^+$,  and using the boundary condition $(\ref{2.16})$, we get the same as \eqref{2.18} using Cauhchy inequality
\begin{eqnarray}\label{2.23}
&&\int_{\mathbb{R}^+}\left(\dfrac{\tilde{\lambda}v_x^2}{4v^{\alpha+2}}+2\tilde{\mu}\frac{v_x^2}{v^{\alpha+2}}\right)dx+\int_0^t\int_{\mathbb{R}^+}\left(\gamma\dfrac{v_x^2}{v^{\alpha+\gamma+2}}+\left[\left(\frac{r^m}{v^{\frac{\alpha+\beta+6}{2}}}v_x\right)_x\right]^2\right)dxd\tau\notag\\
\leq&&\int_{\mathbb{R}^+}\left(\dfrac{3\tilde{\lambda}v_{0x}^2}{2v_0^{2\alpha+2}}+2\tilde{\mu}\frac{v_{0x}^2}{v_0^{\alpha+2}}\right)dx+C\int_{\mathbb{R}^+}\left(u_0^2+u^2\right)dx+\int_0^t\int_{\mathbb{R}^+}H_2dxd\tau,
\end{eqnarray}
to deal with the $\int_0^t\int_{\mathbb{R}^+}H_2dxd\tau$, we have follows.

Case 3: we let
\begin{equation*}
f'_1(\alpha,\beta)=\frac{(\beta+5)(\alpha+\beta+7)}{6}+\frac{(\alpha+1)(\beta+5)}{2}-\left(\frac{\alpha+\beta+6}{2}\right)^2>0,
\end{equation*}
that is,
\begin{eqnarray*}
\dfrac{\beta+2-\sqrt{-2(\beta+2)(\beta+5)}}{2}<\alpha<\dfrac{\beta+2+\sqrt{-2(\beta+2)(\beta+5)}}{2},\,\,-5\leq\beta<-3,
\end{eqnarray*}
then it follows  that
we know $\beta<-3$, so from Remark 2.1, we have $v(t,x)\leq C_{5}$.
\begin{align}\label{2.24}
&\int_0^t\int_{\mathbb{R}^+}H_2dxd\tau\notag\\
\leq&\frac{1}{8}\int_0^t\int_{\mathbb{R}^+}\left(\gamma\dfrac{ v_x^2}{v^{\alpha+\gamma+2}}+\left[\left(\frac{r^m}{v^{\frac{\alpha+\beta+6}{2}}}v_x\right)_x\right]^2\right)dxd\tau-\frac{f'_1(\alpha,\beta)}{2}\int_0^t\int_{\mathbb{R}^+}\frac{r^{2m}}{v^{\alpha+\beta+8}}v_x^4dxd\tau\notag\\
&+C\int_0^t\int_{\mathbb{R}^+}\left(\frac{\left[(r^mu)_x\right]^2}{v^{\alpha-\beta-2}}+\frac{\left[(r^mu)_x\right]^2}{v^{\alpha+1}}+\frac{r^{2m}v_x^2}{v^{\beta+5}}+\frac{v_x^2}{v^{2\alpha+2}}+u^2+u^4\right)dxd\tau\notag\\
\leq&\frac{1}{8}\int_0^t\int_{\mathbb{R}^+}\left(\gamma\dfrac{ v_x^2}{v^{\alpha+\gamma+2}}+\left[\left(\frac{r^m}{v^{\frac{\alpha+\beta+6}{2}}}v_x\right)_x\right]^2\right)dxd\tau-\frac{f'_1(\alpha,\beta)}{2}\int_0^t\int_{\mathbb{R}^+}\frac{r^{2m}}{v^{\alpha+\beta+8}}v_x^4dxd\tau\\
&+C\int_0^t\int_{\mathbb{R}^+}\left(\frac{\left[(r^mu)_x\right]^2}{v^{\alpha+1}}+\frac{r^{2m}v_x^2}{v^{\beta+5}}+\frac{v_x^2}{v^{2\alpha+2}}+u^2\right)dxd\tau+C\int_0^t\|\frac{r^mu}{r^m}(\tau)\|_{L^\infty}^2\int_{\mathbb{R}^+}u^2dxd\tau\notag\\
\leq&\frac{1}{8}\int_0^t\int_{\mathbb{R}^+}\left(\gamma\dfrac{ v_x^2}{v^{\alpha+\gamma+2}}+\left[\left(\frac{r^m}{v^{\frac{\alpha+\beta+6}{2}}}v_x\right)_x\right]^2\right)dxd\tau-\frac{f'_1(\alpha,\beta)}{2}\int_0^t\int_{\mathbb{R}^+}\frac{r^{2m}}{v^{\alpha+\beta+8}}v_x^4dxd\tau\notag\\
&+C\int_0^t\int_{\mathbb{R}^+}\left(\frac{\left[(r^mu)_x\right]^2}{v^{\alpha+1}}+\frac{r^{2m}v_x^2}{v^{\beta+5}}+\frac{v_x^2}{v^{2\alpha+2}}+u^2\right)dxd\tau,\notag
\end{align}
where we need $\alpha\leq\beta+3$. Hence, we have \eqref{2.16} from Lemma $\ref{L2.1}$, combining \eqref{2.23}-\eqref{2.24}, and applying Gronwall's inequality.

The proof of Lemma \ref{L2.4} is thus finished.

\begin{Lemma}\label{L2.5}If the parameters $\alpha$, $\beta$ and $\gamma$ satisfy $\alpha\in\mathbb{R},\,-\gamma-2<\beta<-3,\,\gamma>1$, then there exists a positive constant
$C_{8}$ depending only on $\alpha, \beta, \gamma, \underline{V}, \overline{V}, \|v_0-1\|, \|u_0\|$ and $\|v_{0x}\|_{0,r}$ such that

\begin{eqnarray}\label{2.25}
C_{8}^{-1}\leq\rho(t,x)\leq C_{8}, \quad \forall\, (t, x) \in[0, T]\times\mathbb{R}^+.
\end{eqnarray}
\end{Lemma}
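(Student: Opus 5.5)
We plan to run Kanel's argument from Lemma~\ref{L2.3} again, adjusting the weight in the auxiliary function to the range $-\gamma-2<\beta<-3$. The statement is written with $\rho$, but since $\rho=1/v$, two-sided bounds for $\rho$ and for $v$ are equivalent, so we bound $v$ from above and below. The upper bound is already available: by the Remark following Lemma~\ref{L2.3}, $\beta\le-2$ gives $v\le C$, with $C$ depending only on the initial quantities in $\epsilon_0$. It therefore remains to prove a positive lower bound for $v$, and for this we may assume $v\le C$ throughout.

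We will use the energy identity \eqref{2.4} together with $r\ge a$. This gives
\[
\sup_t\int_{\mathbb{R}^+}\Big(\Phi(v)+\frac{v_x^2}{v^{\beta+5}}\Big)dx\le C .
\]
Unlike Lemma~\ref{L2.3}, we will not use the weak lower bound $\Psi(v)$ of Lemma~\ref{L2.2}, which decays like $v^{-1}$ as $v\to0$. Instead we use the behaviour of $\Phi$ itself near $v=0$. For $\gamma>1$,
\[
\Phi(v)=\int_1^v(1-s^{-\gamma})ds\ \ge\ c\,v^{1-\gamma}-C \qquad (v\to0),
\]
and $\Phi(v)\ge c(v-1)^2$ on $[\tfrac12,C]$. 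So we have the pointwise bound $\Phi(v)\ge c\,\Psi_1(v)$ on $(0,C]$, where $\Psi_1(v)=\min\{(v-1)^2,\,v^{1-\gamma}\}$, suitably smoothed.

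Next we define
\[
\Upsilon(v)=\int_1^v \sqrt{\Psi_1(s)}\,s^{-\frac{\beta+5}{2}}\,ds .
\]
Exactly as in \eqref{2.14}, using $v\to1$ as $x\to\infty$ (from $v-1\in H^1$) and Cauchy--Schwarz, we get
\[
|\Upsilon(v(t,x))|\le \Big\|\sqrt{\Phi(v)}\Big\|\,\Big\|\frac{v_x}{v^{(\beta+5)/2}}\Big\|\le C .
\]
Near $v=0$ the integrand behaves like $s^{\frac{1-\gamma}{2}-\frac{\beta+5}{2}}=s^{-\frac{\gamma+\beta+4}{2}}$. Hence
\[
|\Upsilon(v)|\ \ge\ c\,v^{-\frac{\gamma+\beta+2}{2}}-C \quad\text{as } v\to0 ,
\]
and this blows up precisely when $\gamma+\beta+2>0$, that is $\beta>-\gamma-2$. (In the borderline case the lower bound would be logarithmic, and that case is excluded by the strict inequality.) The uniform bound on $|\Upsilon|$ then forces $v\ge C^{-1}$.

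The main step to verify is the lower estimate $\Phi(v)\gtrsim v^{1-\gamma}$ near $0$ when $\gamma>1$, since $\gamma=1$ would only give a logarithm. We also need to check that all the constants depend only on $\epsilon_0$, so on $\underline V,\overline V,\|v_0-1\|,\|u_0\|,\|v_{0x}\|_{0,r}$, and not on $T$. Because we use only \eqref{2.4}, the bounds are uniform in time. The hypothesis $\beta<-3$ is used only through the Remark's upper bound, and $\alpha$ plays no role.
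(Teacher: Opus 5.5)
Your argument is correct up to one slip (described below), but it takes a different route from the paper. The paper does not build a new Kanel function for this lemma; it localizes instead. On each $[i,i+1]$ it proceeds in three steps:
\begin{enumerate}
\item It uses $\int_i^{i+1}\Phi(v)\,dx\le\epsilon_0$ and Jensen's inequality to trap $\int_i^{i+1}v\,dx$ between the two roots of $\Phi=\epsilon_0$.
\item It picks a point $a_i(t)$ at which $v(t,a_i(t))$ is of order one.
\item By H\"older it bounds $\frac1v(t,x)-\frac1v(t,a_i(t))$ by $\big(\int_i^{i+1}v^{1-\gamma}v^{\beta+\gamma}dx\big)^{1/2}\big(\int_i^{i+1}v_x^2v^{-\beta-5}dx\big)^{1/2}$.
\end{enumerate}
The factor $v^{\beta+\gamma}$ is controlled by the upper bound when $\beta+\gamma>0$. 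When $\beta+\gamma<0$ it is bounded by $\|1/v\|_{L^\infty}^{-(\beta+\gamma)/2}$. Since this exponent is $<1$, Young's inequality absorbs it, using that $\|1/v\|_{L^\infty}$ is finite for solutions in $X(0,T;m,M)$.

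Your proof uses exactly the same two inputs: $\Phi(v)\gtrsim v^{1-\gamma}$ near $0$, and the Korteweg bound on $\|v_xv^{-(\beta+5)/2}\|$. You package them into one global auxiliary function. As a result you need no Jensen step, no mean-value points, no case split on the sign of $\beta+\gamma$ and no absorption, and you get the explicit bound $v^{-(\gamma+\beta+2)/2}\le C$.

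Your argument is also slightly stronger. The logarithmic divergence you mention at $\beta=-\gamma-2$ still forces a lower bound on $v$, so your argument covers that endpoint too. The paper's absorption fails there, because the exponent becomes exactly $1$.

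The paper's localized version, in turn, only needs uniformly-local bounds on $\Phi(v)$ and $v_x^2v^{-\beta-5}$ over unit intervals. It is also the template the paper reuses in the remark after Theorem~\ref{T1.2}.

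The slip: $\min\{(v-1)^2,v^{1-\gamma}\}$ equals $(v-1)^2$ on $(0,1)$, since there $v^{1-\gamma}>1>(v-1)^2$. With that weight $\Upsilon$ stays bounded as $v\to0$, and the argument yields nothing. You want a weight comparable to $v^{1-\gamma}$ near $0$ and to $(v-1)^2$ near $1$, for example $\Psi_1(v)=(v-1)^2v^{1-\gamma}$. Then $\Phi\ge c\,\Psi_1$ on $(0,C]$, because $\Phi/\Psi_1$ is continuous and positive there, with limits $\gamma/2$ at $v=1$ and $1/(\gamma-1)$ at $v=0$. The rest of your computation is unchanged.
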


\noindent{\bf Proof:}~~First, the condition $-\gamma-2<\beta<-3$ implies that $\gamma>1$. Then from the basic energy estimate  $(\ref{2.4})$, we have for each interval $[i,i+1](i\in\mathbb{N})$,
\begin{equation}\label{2.26}
\begin{split}
&\int_i^{i+1}\left(\frac{1}{\gamma-1}v^{-\gamma+1}+v-\frac{\gamma}{\gamma-1}\right)dx=\int_i^{i+1}\Phi(v)dx\leq\epsilon_0.
\end{split}
\end{equation}
Since $\Phi(v)$ is a convex function with respect to $v$, we can obtain from (\ref{2.26}) and Jensen's inequality:
 \[\varphi\left(\frac{\int_a^bf(x)dx}{b-a}\right)\leq\frac{\int_a^b\varphi(f(x))dx}{b-a}, \quad \mbox{for $f(x)\in L^1[a,b]$},\]
that
\begin{equation}\label{2.27}
\Phi\left(\int_i^{i+1}v dx\right)\leq \epsilon_0,
\end{equation}
thus, it holds that
\begin{equation}\label{2.28}
\alpha_1\leq\int_i^{i+1}v(t,x)dx\leq\alpha_2,
\end{equation}
where $\alpha_1,\alpha_2 > 0$ are two roots of the equation $\Phi(x) =\epsilon_0$. By the mean value theorem, there exists $a_i(t) \in  [i, i + 1]$ such that  holds
\begin{equation}\label{2.29}
\alpha_1\leq v(t,a_i(t))\leq\alpha_2.
\end{equation}
Then from \eqref{2.26}, we have
\begin{equation}\label{2.30}
\int^{1+1}_iv^{1-\gamma}dx\leq C.
\end{equation}
For the case  $-\gamma-2<\beta<-3$, we obtain from Remark $2.1$ that $v(t,x)\leq C_{5}$ for all $(t,x)\in[0,T]\times\mathbb{R}^+$. Therefore, it remains to show the lower bound of $v(t,x)$ for this case, which is divided  into the following two subcases:

{\it Case I:} $\gamma+\beta>0$.  For this case, we have from $(\ref{2.29})$, $(\ref{2.30})$, Lemma \ref{L2.1} and the H\"{o}lder inequality  that
\begin{align}\label{2.31}
\frac{1}{v}(t,x)-\frac{1}{v}(t,a_i(t))=&\int_{a_i(t)}^x\left(\frac{1}{v}\right)_ydy\leq\int_i^{i+1}\left|\frac{v_x}{v^2}\right|dx\notag\\
\leq&\int_i^{i+1}|v|^\frac{\beta+1}{2}\left|\frac{v_x}{v^\frac{\beta+5}{2}}\right|dx\notag\\
\leq&C\left(\int_i^{i+1}v^{\beta+1}dx\right)^\frac{1}{2}\cdot \left(\int_i^{i+1}\frac{v_x^2}{v^{\beta+5}}dx\right)^\frac{1}{2}\\
\leq&C\left(\int_i^{i+1}v^{-\gamma+1}\cdot v^{\beta+\gamma}dx\right)^\frac{1}{2}\notag\\
\leq&C\|v^\frac{\beta+\gamma}{2}\|_{L^\infty}\leq C,\notag
\end{align}
where we have used the fact that $v(t, x) \leq C_{5}$ for all $(t, x) \in [0, T] \times \mathbb{R}^+$ in the final step of $(\ref{2.31})$. Therefore, it follows from $(\ref{2.29})$ and $(\ref{2.31})$ that $v(t, x) \geq C_{9}^{-1}$ for some positive constant $C_{9}$ depending only on $\alpha, \beta, \gamma, \underline{V}, \overline{V}, \|v_0-1\|, \|u_0\|$ and $\|v_{0x}\|_{0,r}$.

{\it Case II:} $\gamma+\beta<0$. For this case, similar to (\ref{2.31}), we have
\begin{align}\label{2.32}
\frac{1}{v}(t,x)-\frac{1}{v}(t,a_i(t))=&\int_{a_i(t)}^x\left(\frac{1}{v}\right)_ydy\leq\int_i^{i+1}\left|\frac{v_x}{v^2}\right|dx\notag\\
\leq&\int_i^{i+1}|v|^\frac{\beta+1}{2}\left|\frac{v_x}{v^\frac{\beta+5}{2}}\right|dx\notag\\
\leq&C\left(\int_i^{i+1}v^{\beta+1}dx\right)^\frac{1}{2}\cdot \left(\int_i^{i+1}\frac{v_x^2}{v^{\beta+5}}dx\right)^\frac{1}{2}\\
\leq&C\left(\int_i^{i+1}v^{-\gamma+1}\cdot v^{\beta+\gamma}dx\right)^\frac{1}{2}\notag\\
\leq&C\|\frac{1}{v}\|_{L^\infty}^{-\frac{\beta+\gamma}{2}},\notag
\end{align}
which, combined with the Young inequality and $(\ref{2.30})$, indicates that  $v\geq C_{10}^{-1}$ for some positive constant $C_{10}$ depending only on $\alpha, \beta, \gamma, \underline{V}, \overline{V}, \|v_0-1\|, \|u_0\|$ and $\|v_{0x}\|_{0,r}$.

Then (\ref{2.25}) follows by letting  $C_{8}=\max\{C_{5}, C_{9}, C_{10}\}$. This ends the  proof of Lemma $\ref{L2.5}$.

As a consequence of  Lemmas \ref{L2.1}, \ref{L2.3}-\ref{L2.5}, we can get the following corollary.

\begin{Corollary}\label{C2.1}
 It holds  that
 \begin{equation}\label{2.33}
C_1^{-1}\leq \rho(t,x)\leq C_1,~~\forall(t,x)\in[0,T]\times \mathbb{R}^+,
\end{equation}
\\[-5mm]
\begin{equation}\label{2.34}
\begin{split}
&\|(v-1,u,v_x)(t)\|^2+\|v_x(t)\|^2_{0,r}+\int_0^t\|\left((r^mu)_x,v_x\right)(\tau)\|^2d\tau+\int_0^t\|v_{xx}(\tau)\|_{0,r}^2d\tau\\
\leq& C_{11}(T)\left(\|v_0-1\|^2_1+\|u_0\|^2+\|v_{0x}\|_{0,r}^2\right),\quad \forall\,t\in [0,T],
\end{split}
\end{equation}
where the constant $C_{1}$ depending only on $T, \alpha, \beta, \gamma, \underline{V}, \overline{V}, \|v_0-1\|, \|u_0\|$ and $\|v_{0x}\|_{0,r}$; $C_{11}(T)$ depending only on $T, \alpha, \beta, \gamma, \underline{V}, \overline{V}, \|v_0-1\|_1, \|u_0\|$ and $\|v_{0x}\|_{0,r}$.
\end{Corollary}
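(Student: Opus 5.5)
The plan is to assemble Corollary \ref{C2.1} from the lemmas already proved, case by case according to the parameter ranges of Theorem \ref{T1.1}.

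\emph{Pointwise bounds \eqref{2.33}.} In case (i), $-3\le\beta\le-2$ and $\gamma\ge1$, so Lemma \ref{L2.3} applies directly. In cases (ii) and (iii), $\beta<-3$ and $\gamma>-\beta-2$, that is $-\gamma-2<\beta<-3$. Remark 2.1 (from $\beta\le-2$) gives the upper bound $v\le C_5$, and the lower bound comes from Lemma \ref{L2.5}. Both lemmas use only the basic energy identity \eqref{2.4}, so $C_1$ can be taken as the maximum of $C_5$ and $C_8$. It then depends only on $\alpha,\beta,\gamma,\underline V,\overline V,\|v_0-1\|,\|u_0\|,\|v_{0x}\|_{0,r}$, and in particular not on $T$.

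\emph{Energy bound \eqref{2.34}.} Once $C_1^{-1}\le v\le C_1$ holds, every power of $v$ in \eqref{2.4}, \eqref{2.15} and \eqref{2.16} is comparable to a constant.
\begin{itemize}
\item Lemma \ref{L2.2} gives $\Phi(v)\ge c\,(v-1)^2$, so $\|v-1\|^2+\|u\|^2\le C\epsilon_0$.
\item Since $r\ge a$, the term $r^{2m}v_x^2/v^{\beta+5}$ in \eqref{2.4} controls both $\|v_x\|^2_{0,r}$ and $\|v_x\|^2$.
\item The dissipation in \eqref{2.4} controls $\int_0^t\|(r^mu)_x\|^2d\tau$. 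The condition $2\tilde\mu+\tilde\lambda v^{-\alpha}\ge c>0$ needed here follows from \eqref{1.4} together with the bounds on $v$.
\item Lemma \ref{L2.4} gives $\int_0^t\|v_x\|^2d\tau$ through the term $v_x^2/v^{\alpha+\gamma+2}$.
\end{itemize}
Note that the lower bound in Lemma \ref{L2.4}(1) allows equality in $\alpha$, so case (iii) of Theorem \ref{T1.1} is covered by part (1), and case (ii) by part (2). Finally, $\epsilon_0\le C(\|v_0-1\|_1^2+\|u_0\|^2+\|v_{0x}\|_{0,r}^2)$, using $\Phi(v_0)\le C(v_0-1)^2$ on $[\underline V,\overline V]$; the right-hand sides of Lemma \ref{L2.4} admit the same bound. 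This yields the factor $C_{11}(T)$.

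\emph{Main obstacle: the term $\int_0^t\|v_{xx}\|^2_{0,r}d\tau$.}
\begin{itemize}
\item In part (1) of Lemma \ref{L2.4} this term appears directly, up to the bounded weight $v^{-(\alpha+\beta+6)}$.
\item In part (2) one only controls $\int_0^t\|(r^m v^{-(\alpha+\beta+6)/2}v_x)_x\|^2d\tau$. Expanding the derivative gives
$$r^m v^{-k}v_{xx}=\bigl(r^m v^{-k}v_x\bigr)_x+k\,r^m v^{-k-1}v_x^2-m\,r^{-1}v^{1-k}v_x,\qquad k=\tfrac{\alpha+\beta+6}{2},$$
where I used $r_x=r^{-m}v$. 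The last term is bounded by $C\int_0^t\|v_x\|^2d\tau$. The quartic term $\int_0^t\int r^{2m}v_x^4\,dxd\tau$ is exactly what the favorable sign $f_1'>0$ absorbed in \eqref{2.24}. To make this explicit, I would keep $\frac{f_1'}{2}\int_0^t\int r^{2m}v_x^4/v^{\alpha+\beta+8}$ on the left-hand side of \eqref{2.23}, which gives a bound on it by $C_7(T)$.
\end{itemize}
This recovers $\int_0^t\|v_{xx}\|_{0,r}^2d\tau\le C(T)$. Homogenizing the constants then gives \eqref{2.34}.
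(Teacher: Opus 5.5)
Your proposal is correct. For \eqref{2.33} it follows the paper: $C_1=\max\{C_5,C_8\}$, obtained from Lemma \ref{L2.3}, Remark 2.1 and Lemma \ref{L2.5}. For the lower-order part of \eqref{2.34} it also matches: Lemmas \ref{L2.1} and \ref{L2.4}, once $v$ is bounded.

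The real difference is how you bound $\int_0^t\|v_{xx}\|_{0,r}^2d\tau$. The paper applies the expansion \eqref{2.36} uniformly and handles the quartic term with the one-dimensional Sobolev inequality. Since $\sup_t\|v_x(t)\|_{0,r}\le C$ by \eqref{2.4} and $r\ge a$, it gets $\int_0^t\|v_x\|_{L^\infty}^2\|v_x\|_{0,r}^2d\tau\le C\int_0^t\|v_x\|\,\|v_{xx}\|\,d\tau\le\frac14\int_0^t\|v_{xx}\|_{0,r}^2d\tau+C\int_0^t\|v_x\|^2d\tau$, and absorbs the first term.

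You take a different route. In part (1) of Lemma \ref{L2.4} you read $v_{xx}$ off directly, which is more careful than the paper, since the paper does not separate the two parts. In part (2) you reopen the proof and keep $\frac{f_1'}{2}\int_0^t\!\int r^{2m}v_x^4/v^{\alpha+\beta+8}$ on the left of \eqref{2.23}.

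This avoids interpolation, but it ties the corollary to the inside of the proof of Lemma \ref{L2.4}. In particular, you need $f_1'>0$ strictly throughout case (ii). The paper's step uses only the stated conclusion \eqref{2.16} plus the basic energy bound.

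Be careful with that sign. One has $12f_1'=-3\alpha^2+2(\beta+2)\alpha-(\beta+2)(\beta+4)$. Its roots are $\frac{\beta+2\pm\sqrt{-2(\beta+2)(\beta+5)}}{3}$, not the paper's $\frac{\beta+2\pm\sqrt{-2(\beta+2)(\beta+5)}}{2}$. For example, $(\alpha,\beta)=(-\frac32,-\frac72)$ lies in case (ii), yet $f_1'=-\frac18$ there.

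This error sits in Lemma \ref{L2.4} itself, so the paper's corollary inherits it too. Your route, however, relies on the sign explicitly. You should state that it is valid only where $f_1'>0$ genuinely holds.
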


\noindent{\bf Proof:}~~First,  (\ref{2.33}) follows immediately from Lemmas  \ref{L2.3}, \ref{L2.5} with $C_1=\max\{C_{5},C_{8}\}$.
For (\ref{2.34}), we get immediately from $(\ref{2.33})$ and Lemma \ref{L2.1}, \ref{L2.4} that
\begin{equation}\label{2.35}
\begin{split}
&\|(v-1,u,v_x)(t)\|^2+\|v_x(t)\|^2_{0,r}+\int_0^t\|\left((r^mu)_x,v_x\right)(\tau)\|^2d\tau\\
\leq& C_{12}(T)\left(\|v_0-1\|_1^2+\|u_0\|^2++\|v_{0x}(t)\|^2_{0,r}\right),\quad \forall\,t\in [0,T].
\end{split}
\end{equation}
Moreover, we derive that
\begin{equation}\label{2.36}
\begin{split}
r^{2m}v_{xx}^2=&v^{\alpha+\beta+6}\left[\left(\frac{r^m}{v^{\frac{\alpha+\beta+6}{2}}}v_x\right)_x\right]^2-m^2\frac{v^2v_x^2}{r^2}-\left(\frac{\alpha+\beta+6}{2}\right)^2\frac{v_x^4r^{2m}}{v^2}-2mvr^{m-1}v_xv_{xx}\\
&+\frac{m(\alpha+\beta+6)}{2}r^{m-1}v_x^3+\frac{\alpha+\beta+6}{2}\frac{r^{2m}v_{xx}v_x^2}{v}.
\end{split}
\end{equation}
Then it follows from Lemma \ref{L2.4} and \eqref{2.33} that
\begin{align*}
&\int_0^t\|v_{xx}(\tau)\|_{0,r}^2d\tau\\
\leq&\frac{1}{4}\int_0^t\|v_{xx}(\tau)\|_{0,r}^2d\tau+C\int_0^t\int_{\mathbb{R}^+}\left(\left[\left(\frac{r^m}{v^{\frac{\alpha+\beta+6}{2}}}v_x\right)_x\right]^2+v_x^2+r^{2m}v_x^2+r^{2m}v_x^4\right)dxd\tau\\
\leq&\frac{1}{4}\int_0^t\|v_{xx}(\tau)\|_{0,r}^2d\tau+C(T)\left(\|v_0-1\|^2_1+\|u_0\|^2+\|v_{0x}\|_{0,r}^2\right)+\int_0^t\|v_x(\tau)\|_{L^{\infty}}^2\int_{\mathbb{R}^+}r^{2m}v_x^2dxd\tau\\
\leq&\frac{1}{2}\int_0^t\|v_{xx}(\tau)\|_{0,r}^2d\tau+C(T)\left(\|v_0-1\|^2_1+\|u_0\|^2+\|v_{0x}\|_{0,r}^2\right),
\end{align*}
which leads to
\begin{eqnarray}\label{2.37}
\int_0^t\|v_{xx}(\tau)\|_{0,r}^2d\tau\leq C_{13}(T)\left(\|v_0-1\|^2_1+\|u_0\|^2+\|v_{0x}\|_{0,r}^2\right).
\end{eqnarray}
Combining (\ref{2.35})-(\ref{2.37}) yields  $(\ref{2.34})$ immediately. The proof of Corollary \ref{C2.1} is thus finished.

Now, we carry out the higher order estimates  of solutions $(v,u)(t, x)$. The estimate for $\| (v_{xx}, u_x)(t)\|$ is provided in  the following lemma.
\begin{Lemma}\label{L2.6}
There exist a positive constant $C_{14}$ depending only on $\alpha, \beta , \gamma, \underline{V}, \overline{V}, \|v_0-1\|_1, \|u_0\|, \|\frac{(r^mu_0)_x}{r^m}\|$ and $\|v_{0x}\|_{1,r}$ such that  for $t\in[0, T]$,
\begin{equation}\label{2.38}
\begin{split}
&\left\|\frac{(r^mu)_x}{r^m}(\tau)\right\|+\|v_{xx}(\tau)\|_{0,r}^2+\int_0^t\left\|(r^mu)_{xx}(\tau)\right\|^2d\tau\\
\leq &C_{14}(T)\left(\|v_0-1\|^2_1+\|u_0\|^2+\|\frac{(r^mu_0)_x}{r^m}\|^2+\|v_{0x}\|_{1,r}^2+1\right)+\varepsilon\int_0^t\int_{\mathbb{R}^+}\frac{r^{2m}}{v^{\beta+5}}v_{xxx}^2dxd\tau.
\end{split}
\end{equation}
\end{Lemma}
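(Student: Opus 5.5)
We plan to obtain both quantities in \eqref{2.38} from two energy identities, using the uniform bounds $C_1^{-1}\le v\le C_1$ of Corollary \ref{C2.1} throughout, so that every power of $v$ is harmless and $r\ge a$ allows negative powers of $r$ to be dropped. The natural velocity-type variable is $w:=\frac{(r^mu)_x}{r^m}$, which is close to $u_x$. Differentiating $\eqref{2.1}_2$ in $x$ after dividing by $r^m$ (equivalently, writing $\eqref{2.1}_2$ as $r^{-m}u_t+p(v)_x=[\,\cdot\,]_x+\dots$) gives an equation for $(r^mu)_x$.

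\textbf{Step 1: the $w$-estimate.} We multiply $\eqref{2.1}_2$ by $-\big(\frac{(r^mu)_x}{r^{m}}\big)_x r^{-m}$, or more simply by $-(r^mu)_{xx}r^{-2m}$, and integrate over $[0,t]\times\mathbb{R}^+$. The term $u_t$ produces $\frac{d}{dt}\frac12\|w\|^2$ plus commutator terms involving $r_t=u$ and $r_x=r^{-m}v$. These are bounded by $\int u^2w^2$ and similar lower-order quantities and are controlled via $\|u\|_{L^\infty}^2\le C\|u\|\|u_x\|$ and Corollary \ref{C2.1}. The viscous part gives the dissipation $c\int_0^t\|(r^mu)_{xx}\|^2$, where $2\tilde\mu+\tilde\lambda v^{-\alpha}\ge c>0$ by \eqref{1.4} and the $v$-bounds; the lower-order pieces from differentiating the coefficient, of the form $v_x(r^mu)_x$, are bounded by $\varepsilon\|(r^mu)_{xx}\|^2+C\|v_x\|_{L^\infty}^2\|(r^mu)_x\|^2$. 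The pressure term contributes $\int p'(v)v_x(r^mu)_{xx}$, handled by Cauchy's inequality and \eqref{2.34}. The Korteweg part contains $\big(\frac{r^{2m}}{v^{\beta+5}}v_x\big)_{xx}$, i.e.\ $r^{2m}v_{xxx}$ plus lower-order terms. We bound $\int r^{m}v^{-\beta-5}v_{xxx}(r^mu)_{xx}r^{-m}$ by $\varepsilon\int\frac{r^{2m}}{v^{\beta+5}}v_{xxx}^2+C\|(r^mu)_{xx}\|^2$. This shows why $\varepsilon\int\!\!\int r^{2m}v^{-\beta-5}v_{xxx}^2$ is permitted on the right of \eqref{2.38}. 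To avoid $C\|(r^mu)_{xx}\|^2$ swallowing the dissipation, we instead put the small constant on the $(r^mu)_{xx}$ side, and close using the second step.

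\textbf{Step 2: the $v_{xx}$-estimate.} We differentiate $\eqref{2.1}_1$ once, $v_{xt}=(r^mu)_{xx}$, multiply by $-(r^{2m}v_{x})_{x}$ (or simply take $\frac{d}{dt}\frac12\|v_{xx}\|_{0,r}^2$ from $v_{xxt}=(r^mu)_{xxx}$), and integrate by parts once, using $v_x=0$ and $u=0$ at $x=0$ and $r_t=u$. This yields
\[
\tfrac12\|v_{xx}(t)\|_{0,r}^2\le \tfrac12\|v_{0xx}\|_{0,r}^2+\int_0^t\!\!\int r^{2m}|v_{xxx}|\,|(r^mu)_{xx}|+\text{l.o.t.}
\]
The cross term is bounded by $\varepsilon\int\!\!\int\frac{r^{2m}}{v^{\beta+5}}v_{xxx}^2+C_\varepsilon\int\!\!\int r^{2m}|(r^mu)_{xx}|^2$. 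The weight $r^{2m}$ is problematic here, so the plan is rather to couple with Step 1: we add a suitably large multiple of the Step 1 inequality, and combine with the identity obtained by testing $\eqref{2.1}_2$ against $\frac{v_{xx}}{r^m}$-type multipliers, as in the proof of Lemma \ref{L2.4}, which produces good dissipation $\int\!\!\int \frac{r^{2m}}{v^{\beta+5}}v_{xxx}^2$ only at the next level. For the present lemma we therefore accept the $\varepsilon$-term.

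\textbf{Step 3: closing.} The nonlinear products $r^{2m}v_x^2v_{xx}^2$, $u^2w^2$ and similar terms are handled with $\|v_x\|_{L^\infty}^2\le C\|v_x\|\|v_{xx}\|$, $\|u\|_{L^\infty}$, and the time-integrability in \eqref{2.34}, so that they are of the form $g(t)\big(\|w\|^2+\|v_{xx}\|_{0,r}^2\big)$ with $g\in L^1(0,T)$. Gronwall's inequality then gives \eqref{2.38}. The main obstacle is the highest-order Korteweg coupling between $v_{xxx}$ and $(r^mu)_{xx}$ with mismatched $r$-weights. We expect to resolve it by choosing the multipliers so that the leading cross terms cancel exactly, using $v_t=(r^mu)_x$, and leaving only the $\varepsilon$-remainder that is absorbed in the next lemma.
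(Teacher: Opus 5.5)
There is a genuine gap, and it sits at the point you yourself flag as the main obstacle. In Step~1 you treat the top-order Korteweg contribution $\int_0^t\!\int\frac{r^{2m}}{v^{\beta+5}}v_{xxx}(r^mu)_{xx}\,dx\,d\tau$ as a perturbation and split it with Cauchy's inequality. That cannot close.

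\begin{itemize}
\item If you put the small constant on $v_{xxx}$, you are left with $C_\varepsilon\int_0^t\!\int r^{2m}[(r^mu)_{xx}]^2$ once the weights are tracked correctly. This has a large constant and an unbounded weight, so the unweighted dissipation $\int_0^t\|(r^mu)_{xx}\|^2$ cannot absorb it.
\item If you put the small constant on $(r^mu)_{xx}$, you are left with a \emph{large} multiple of $\int_0^t\!\int\frac{r^{2m}}{v^{\beta+5}}v_{xxx}^2$. That is not what \eqref{2.38} allows. Lemma~\ref{L2.7} cannot absorb it either, because it bounds this integral by a constant times the left side of \eqref{2.38}, so the loop closes only when the coefficient in front is small.
\end{itemize}

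Step~2 produces the same cross term with the same difficulty. Your Step~3 hope that ``the leading cross terms cancel exactly'' is the whole content of the lemma, and you leave it unexecuted. With the weights you actually propose, the two cross terms would not even match: the multiplier $r^{-2m}(r^mu)_{xx}$ gives a Korteweg term of size $\frac{r^{m}}{v^{\beta+5}}v_{xxx}(r^mu)_{xx}$, while $\frac{d}{dt}\|v_{xx}\|_{0,r}^2$ (without the factor $v^{-\beta-5}$) gives $r^{2m}v_{xxx}(r^mu)_{xx}$.

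The missing idea is that no top-order cross term ever has to be estimated. Since $v_{xt}=(r^mu)_{xx}$ by \eqref{2.1}$_1$, multiply \eqref{2.1}$_2$ by $-r^{-m}(r^mu)_{xx}=-r^{-m}v_{xt}$. This is the paper's multiplier, with the sign chosen so the energy is positive. The viscous part then yields the unweighted dissipation $(2\tilde\mu+\tilde\lambda v^{-\alpha})v^{-1}[(r^mu)_{xx}]^2$. The Korteweg part becomes $\big(\frac{r^{2m}}{v^{\beta+5}}v_x\big)_{xx}v_{xt}$. One integration by parts turns its leading piece into $-\partial_t\big(\frac{r^{2m}}{2v^{\beta+5}}v_{xx}^2\big)$, plus a flux, plus terms in which $(r^mu)_{xx}$ meets at most $v_{xx}$. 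In those terms the $r$-weights sit on the $v$-factors, which is harmless for $r\ge a$ given Corollary~\ref{C2.1}. So the single identity \eqref{2.39} controls both $\|\frac{(r^mu)_x}{r^m}\|^2$ and $\int\frac{r^{2m}}{v^{\beta+5}}v_{xx}^2$, and there is no separate Step~2.

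The $\varepsilon$-term is also not the leading cross term; it is a lower-order commutator. Integrating $r^{-2m}(r^mu)_t(r^mu)_{xx}$ by parts brings out $(r^{-2m})_x=-2mvr^{-3m-1}$. Substituting $(r^mu)_t$ from \eqref{2.1}$_2$ then produces a term $2m\frac{r^{m-1}}{v^{\beta+4}}(r^mu)_xv_{xxx}$. Here $v_{xxx}$ is paired with $(r^mu)_x$, which is already in $L^2(0,T;L^2)$ by \eqref{2.34}. Putting $\varepsilon$ on $v_{xxx}$ therefore costs only $C_\varepsilon\int_0^t\!\int r^{-2}[(r^mu)_x]^2\le C(T)$. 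After that, the remaining terms are handled by Sobolev inequalities and Gronwall, essentially as in your Step~3.
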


\noindent{\bf Proof:}~~Multiplying $(\ref{2.1})_2$ by $r^{-m}(r^mu)_{xx}$ and using $(\ref{2.1})_1$, we have
\begin{equation}\label{2.39}
\left(\frac{r^{2m}}{2v^{\beta+5}}v_{xx}^2+\frac{[(r^mu)_x]^2}{2r^{2m}}\right)_t+(2\tilde{\mu}+\frac{\tilde{\lambda}}{v^\alpha})\frac{[(r^mu)_{xx}]^2}{v}=R_{4x}+H_3,
\end{equation}
where
\begin{eqnarray*}
R_4=&&\frac{(r^mu)_t(r^mu)_x}{r^{2m}}+\frac{r^{2m}}{v^{\beta+5}}v_{xx}v_{tx},\\
H_3=&&-mu\frac{[(r^mu)_x]^2}{r^{2m+1}}+2m^2\frac{vu^2(r^mu)_x}{r^{2m+2}}-m\frac{u^2}{r^{m+1}}(r^mu)_{xx}-\gamma\frac{v_x}{v^{\gamma+1}}(r^mu)_{xx}\\
&&+\left(\frac{2\tilde\mu}{v^2}+\frac{\tilde{\lambda}(\alpha+1)}{v^{\alpha+2}}\right)v_x(r^mu)_x(r^mu)_{xx}+m\frac{r^{2m-2}}{v^{\beta+5}}uv_{xx}^2-\frac{\beta+5}{2}\frac{r^{2m}}{v^{\beta+5}}(r^mu)_xv_{xx}^2\\
&&+2m\frac{v(r^mu)_x}{r^{m+1}}\Bigg\{\left(2\tilde\mu+\frac{\tilde{\lambda}}{v^{\alpha}}\right)\frac{(r^mu)_{xx}}{v}-\left(\frac{2\tilde\mu}{v^2}+\frac{\tilde{\lambda}(\alpha+1)}{v^{\alpha+2}}\right)v_x(r^mu)_x-\frac{r^{2m}}{v^{\beta+5}}v_{xxx}\\
&&~~~~~~~~~~~~~~~~~~~~~~-4m\frac{r^{m-1}}{v^{\beta+5}}v_{xx}+2(\beta+5)\frac{r^{2m}}{v^{\beta+6}}v_xv_{xx}-2m(m-1)\frac{v_x}{r^2v^{\beta+3}}\\
&&~~~~~~~~~~~~~~~~~~~~~~+3m(\beta+4)\frac{r^{m-1}}{v^{\beta+5}}v_x^2-\frac{(\beta+5)(\beta+6)}{2}\frac{r^{2m}}{v^{\beta+7}}v_x^3+\gamma\frac{v_x}{v^{\gamma+1}}\Bigg\}\\
&&+(r^mu)_{xx}\Bigg\{2m\frac{r^{m-1}}{v^{\beta+4}}v_{xx}+2m(m-1)\frac{v_x}{r^2v^{\beta+3}}-3m(\beta+4)\frac{r^{m-1}}{v^{\beta+5}}v_x^2\\
&&~~~~~~~~~~~~~~~~~~-(\beta+5)\frac{r^{m-1}}{v^{\beta+4}}v_xv_{xx}+\frac{(\beta+5)(\beta+6)}{2}\frac{r^{2m}}{v^{\beta+7}}v_x^3\Bigg\}.
\end{eqnarray*}
Integrating $(\ref{2.39})$ over $[0,T]\times\mathbb{R}^+$, and notice that the boundary conditions $(\ref{1.16})$ imply that
\begin{eqnarray}\label{4.40}
(r^mu)_t(t,0)=0,\quad v_{tx}(t,0)=0,\quad \forall t\in[0,T],
\end{eqnarray}
we can obtain
\begin{eqnarray}\label{2.41}
&&\int_{\mathbb{R}^+}\left(\frac{r^{2m}}{2v^{\beta+5}}v_{xx}^2+\frac{[(r^mu)_x]^2}{2r^{2m}}\right)dx+\int_0^t\int_{\mathbb{R}^+}\left(2\tilde{\mu}+\frac{\tilde{\lambda}}{v^\alpha}\right)\frac{[(r^mu)_{xx}]^2}{v}dxd\tau\notag\\
\leq&&\int_{\mathbb{R}^+}\left(\frac{r^{2m}}{2v_0^{\beta+5}}v_{0xx}^2+\frac{[(r^mu_0)_x]^2}{2r^{2m}}\right)dx+\int_0^t\int_{\mathbb{R}^+}H_3dxd\tau.
\end{eqnarray}
With the use of  the Cauchy inequality, the Sobolev inequality, the Young inequality and Corollary \ref{C2.1}, we have
\begin{eqnarray}\label{2.42}
&&\int_0^t\int_{\mathbb{R}^+}H_3dxd\tau\notag\\
\leq&&\frac{1}{8}\int_0^t\int_{\mathbb{R}^+}\left(2\tilde{\mu}+\frac{\tilde{\lambda}}{v^\alpha}\right)\frac{[(r^mu)_{xx}]^2}{v}dxd\tau+\varepsilon\int_0^t\int_{\mathbb{R}^+}\frac{r^{2m}}{v^{\beta+5}}v_{xxx}^2dxd\tau\notag\\
&&+C\int_0^t\int_{\mathbb{R}^+}\Bigg\{\frac{[(r^mu)_x]^2u}{r^{2m}}+[(r^mu)_x]^2+u^4+v_x^2+r^{2m}v_{xx}^2+[(r^mu)_x]^2v_x^2+r^{2m}v_{xx}^2u\notag\\
&&~~~~~~~~~~~~~~~~~~+r^{2m}v_{xx}^2(r^mu)_x+r^{2m}[(r^mu)_x]^2v_x^2+r^{2m}v_x^4+r^{4m}v_{xx}^2v_x^2+r^{4m}v_x^6\Bigg\}dxd\tau\notag\\
\leq&&\frac{1}{8}\int_0^t\int_{\mathbb{R}^+}\left(2\tilde{\mu}+\frac{\tilde{\lambda}}{v^\alpha}\right)\frac{[(r^mu)_{xx}]^2}{v}dxd\tau+\varepsilon\int_0^t\int_{\mathbb{R}^+}\frac{r^{2m}}{v^{\beta+5}}v_{xxx}^2dxd\tau+C(T)\\
&&+C\int_0^t\|u(\tau)\|_{L^\infty}\int_{\mathbb{R}^+}\left(\frac{[(r^mu)_x]^2}{r^{2m}}+r^{2m}v_{xx}^2\right)dxd\tau+C\int_0^t\|u(\tau)\|_{L^\infty}^2\int_{\mathbb{R}^+}u^2dxd\tau\notag\\
&&+C\int_0^t\|(r^mu)_x(\tau)\|^2_{L^{\infty}}\int_{\mathbb{R}^+}v_x^2dxd\tau+C\int_0^t\|(r^mu)_x(\tau)\|_{L^\infty}\int_{\mathbb{R}^+}r^{2m}v_{xx}^2dxd\tau\notag\\
&&+C\int_0^t\|(r^mv_x)(\tau)\|^2_{L^\infty}\int_{\mathbb{R}^+}r^{2m}v_{xx}^2dxd\tau+C\int_0^t\|(r^mv_x)(\tau)\|^2_{L^\infty}\|v_x(\tau)\|^2_{L^{\infty}}\int_{\mathbb{R}^+}r^{2m}v_{x}^2dxd\tau\notag\\
\leq&&\frac{1}{4}\int_0^t\int_{\mathbb{R}^+}\left(2\tilde{\mu}+\frac{\tilde{\lambda}}{v^\alpha}\right)\frac{[(r^mu)_{xx}]^2}{v}dxd\tau+\varepsilon\int_0^t\int_{\mathbb{R}^+}\frac{r^{2m}}{v^{\beta+5}}v_{xxx}^2dxd\tau+C(T)\notag\\
&&+C\int_0^t\|(r^mu)_x(\tau)\|^2\int_{\mathbb{R}^+}\frac{[(r^mu)_x]^2}{r^{2m}}+C\int_{\mathbb{R}^+}\|v_{xx}(\tau)\|_{0,r}^4dx.\notag
\end{eqnarray}
Putting (\ref{2.42}) into (\ref{2.42}), then we can obtain (\ref{2.38}) by using Corollary 4.1 and Gronwall's inequality. The proof of Lemma $\ref{L2.6}$ is thus finished.

Finally, we estimate the term  $\int_0^t\|v_{xxx}(\tau)\|_{0,r}^2d\tau$ .
\begin{Lemma}\label{L2.7}
It holds for $t\in[0, T]$ that
\begin{equation}\label{2.43}
\int^t_0\|v_{xxx}(\tau )\|_{0,r}^2d\tau\leq C_{15}(T)\left(\|v_0-1\|^2_1+\|u_0\|^2+\|\frac{(r^mu_0)_x}{r^m}\|^2+\|v_{0x}\|_{1,r}^2+1\right),
\end{equation}
where the constant $C_{15}(T)$ depending only on $\alpha, \beta , \gamma, \underline{V}, \overline{V}, \|v_0-1\|_1, \|u_0\|, \|\frac{(r^mu_0)_x}{r^m}\|$ and $\|r^mv_{0x}\|_{1,r}$.
\end{Lemma}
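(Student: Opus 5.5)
The approach is to read the estimate for $v_{xxx}$ directly off the momentum equation $(\ref{2.1})_2$, where the Korteweg term carries the highest derivative. Expanding $\big(\frac{r^{2m}}{v^{\beta+5}}v_x\big)_{xx}$ isolates $\frac{r^{2m}}{v^{\beta+5}}v_{xxx}$. Since $(\ref{2.1})_2$ has the form $u_t+r^m[\cdots]_x=\ldots$, I will divide by $r^m$ and solve for $\frac{r^{m}}{v^{\beta+5}}v_{xxx}$, which gives
\begin{equation*}
\frac{r^{m}}{v^{\beta+5}}v_{xxx}=r^{-m}u_t+\big[p(v)\big]_x-\Big[\Big(2\tilde{\mu}+\frac{\tilde{\lambda}}{v^\alpha}\Big)\frac{(r^mu)_x}{v}\Big]_x+\mathcal{N},
\end{equation*}
where $\mathcal{N}$ collects all lower-order terms. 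These are products of $r^{m-1}v_{xx}$, $r^{m}v_xv_{xx}/v$, $r^{m}v_x^3$, $r^{m-2}v_x$, $r^{m-1}v_x^2$ and $m r^{-1}v_x^2/v^{\beta+5}$, each with bounded powers of $v$. The factor $v^{-(\beta+5)}$ is bounded above and below by Corollary \ref{C2.1}, and since $r\ge a>0$, negative powers of $r$ are harmless. Squaring and integrating over $[0,t]\times\mathbb{R}^+$ therefore bounds $\int_0^t\|v_{xxx}\|_{0,r}^2d\tau$ by
\begin{equation*}
C\int_0^t\Big(\|r^{-m}u_t\|^2+\|v_x\|^2+\|(r^mu)_{xx}\|^2+\|v_x(r^mu)_x\|^2+\|\mathcal{N}\|^2\Big)d\tau .
\end{equation*}

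The terms are then handled one at a time.

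\textbf{Pressure and viscous terms.} The pressure term is controlled by Corollary \ref{C2.1}. The viscous term involves $(r^mu)_{xx}$, which is controlled by Lemma \ref{L2.6}, and $v_x(r^mu)_x$, which I bound by $\|v_x\|_{L^\infty}^2\|(r^mu)_x\|^2$. The Sobolev inequality gives $\|v_x\|_{L^\infty}^2\le C\|v_x\|\|v_{xx}\|$, and both factors are bounded uniformly in time by Corollary \ref{C2.1} and Lemma \ref{L2.6}.

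\textbf{The term $u_t$.} I will not estimate $u_t$ independently. Instead, I will use $(\ref{2.1})_2$ itself in a weaker form. Alternatively, one can avoid $u_t$ entirely by testing $(\ref{2.1})_2$ against $-r^{m}\big(\frac{r^{2m}}{v^{\beta+5}}v_x\big)_{xx}$ instead of squaring. The $u_t$ term then becomes $\frac{d}{dt}\int u\,(\ldots)_x$ after integration by parts in $x$, plus terms involving $v_t=(r^mu)_x$, and these are controlled by $\|(r^mu)_{xx}\|$ and the bounds already obtained.

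\textbf{The cubic and quadratic terms in $\mathcal{N}$.} The worst contributions are $r^{2m}v_x^2v_{xx}^2$ and $r^{2m}v_x^6$. I bound them by $\|v_x\|_{L^\infty}^2\|v_{xx}\|_{0,r}^2$ and by $\|v_x\|_{L^\infty}^4\|v_x\|_{0,r}^2$. Both are bounded in $L^1(0,t)$ using $\sup_t\|v_{xx}\|_{0,r}$ from Lemma \ref{L2.6} and $\int_0^t\|v_{xx}\|^2d\tau$ from Corollary \ref{C2.1}.

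The main obstacle is that Lemma \ref{L2.6} only bounds $\int_0^t\|(r^mu)_{xx}\|^2$ and $\sup\|v_{xx}\|_{0,r}$ up to an additive error $\varepsilon\int_0^t\int\frac{r^{2m}}{v^{\beta+5}}v_{xxx}^2$. The plan is therefore to prove the bound above with constant $C$ independent of $\varepsilon$, insert the estimate \eqref{2.38}, and choose $\varepsilon$ small (say $\varepsilon C C_1^{|\beta+5|}\le\frac12$) to absorb the $v_{xxx}$ term into the left-hand side. This closes both \eqref{2.38} and \eqref{2.43} simultaneously. In the version that uses testing rather than squaring, one must also check that the boundary terms at $x=0$ vanish. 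This follows from $u(t,0)=0$ and $v_x(t,0)=0$, and hence $v_{tx}(t,0)=0$ by \eqref{4.40}.
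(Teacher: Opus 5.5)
\textbf{The squaring route has a gap at the $u_t$ term.} Squaring requires $\int_0^t\|r^{-2m}u_t\|^2d\tau$. You actually have to divide by $r^{2m}$, not $r^m$, to isolate $\frac{r^m}{v^{\beta+5}}v_{xxx}$. No earlier estimate controls $u_t$ in $L^2_{t,x}$: Corollary \ref{C2.1} and Lemma \ref{L2.6} supply only $\sup_t\|\frac{(r^mu)_x}{r^m}\|$, $\sup_t\|v_{xx}\|_{0,r}$, lower-order norms, and $\int_0^t\|(r^mu)_{xx}\|^2d\tau$. The only information on $u_t$ comes from $(\ref{2.1})_2$ itself, and that equation contains $r^m\big(\frac{r^{2m}}{v^{\beta+5}}v_x\big)_{xx}$, i.e.\ the very $v_{xxx}$ you are estimating. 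So ``using $(\ref{2.1})_2$ in a weaker form'' is circular. At this level $u_t$ can only be handled in duality with $v_{xxx}$, never in $L^2$. Your fallback of testing instead of squaring is therefore the only viable line.

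\textbf{The testing fallback is the paper's idea, but your multiplier has the wrong weight.} The paper multiplies $(\ref{2.1})_2$ by $r^{-m}v_{xxx}$, so the principal term is exactly $\frac{r^{2m}}{v^{\beta+5}}v_{xxx}^2$. It then writes $r^{-m}u_t=r^{-2m}(r^mu)_t-mr^{-m-1}u^2$ and integrates by parts three times:
\begin{enumerate}
\item in $x$, using $(r^mu)_t(t,0)=0$;
\item in $t$;
\item in $x$ again, using $v_{xxt}=(r^mu)_{xxx}$ and $(r^mu)_{xx}(t,0)=v_{xt}(t,0)=0$.
\end{enumerate}
This turns the $u_t$ term into a time derivative of $\int\frac{(r^mu)_x}{r^{2m}}v_{xx}\,dx$, the term $\int\frac{[(r^mu)_{xx}]^2}{r^{2m}}dx$, and lower-order pieces; see \eqref{2.44} and $H_4$. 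These are exactly the quantities Lemma \ref{L2.6} controls.

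With your multiplier $r^m\big(\frac{r^{2m}}{v^{\beta+5}}v_x\big)_{xx}\approx\frac{r^{3m}}{v^{\beta+5}}v_{xxx}$, the same manipulation produces two problem terms:
\begin{itemize}
\item the time-boundary term $\int(r^mu)_x\frac{r^{2m}}{v^{\beta+5}}v_{xx}\,dx$, which needs $\sup_t\|r^m(r^mu)_x\|$;
\item $\int_0^t\int\frac{r^{2m}}{v^{\beta+5}}[(r^mu)_{xx}]^2dxd\tau$.
\end{itemize}
Neither is available. These weights are not harmless: $(r^{m+1})_x=(m+1)v$ gives $r^{m+1}=a^{m+1}+(m+1)\int_0^xv\,dy$, so $r\to\infty$ as $x\to\infty$. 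Negative powers of $r$ can be dropped, as you correctly did, but positive ones cannot.

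Your multiplier would also give the dissipation $\frac{r^{6m}}{v^{2\beta+10}}v_{xxx}^2$ rather than $\|v_{xxx}\|_{0,r}^2$, so every remaining term would carry extra weights too. Once the multiplier is $r^{-m}v_{xxx}$, the rest of your plan matches the paper: Sobolev and Young for the nonlinear Korteweg terms, then absorbing $\varepsilon\int_0^t\int\frac{r^{2m}}{v^{\beta+5}}v_{xxx}^2\,dxd\tau$ from Lemma \ref{L2.6} by taking $\varepsilon$ small.
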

\noindent{\bf Proof:}~~multiplying \eqref{2.1}$_2$ the equation by $r^{-m}v_{xxx}$, we have
\begin{equation}\label{2.44}
\frac{r^{2m}}{v^{\beta+5}}v_{xxx}^2+\left(2m\frac{uv}{r^{2m+1}}v_{xx}-\frac{(r^mu)_x}{r^{2m}}v_{xx}\right)_t=R_{5x}+H_4,
\end{equation}
where
\begin{eqnarray*}
R_5=&&-\left(\frac{u}{r^m}\right)v_{xx}+\frac{u}{r^m}v_{tx}\\
H_4=&&(r^mu)_{xx}\Bigg\{\frac{(r^mu)_{xx}}{r^{2m}}-4m\frac{(r^mu)_xv}{r^{3m+1}}-2m\frac{uv_x}{r^{2m+1}}+2m(3m+1)\frac{uv^2}{r^{3m+2}}\Bigg\}-m\frac{u^2}{r^{m+1}}v_{xxx}\\
&&+\gamma\frac{v_x}{v^{\gamma+1}}v_{xxx}+\left(\frac{2\tilde\mu}{v}+\frac{\tilde\lambda}{v^{\alpha+1}}\right)(r^mu)_{xx}v_{xxx}-\left(\frac{2\tilde\mu}{v^2}+\frac{\tilde\lambda(\alpha+1)}{v^{\alpha+2}}\right)v_x(r^mu)_xv_{xxx}\\
&&-v_{xxx}\Bigg\{4m\frac{r^{m-1}}{v^{\beta+4}}v_{xx}-2(\beta+5)\frac{r^{2m}}{v^{\beta+6}}v_xv_{xx}+2m(m-1)\frac{v_x}{v^{\beta+3}r^2}-3m(\beta+4)\frac{r^{m-1}}{v^{\beta+5}}v_x^2\\
&&~~~~~~~~~~~+\frac{(\beta+5)(\beta+6)}{2}\frac{r^{2m}}{v^{\beta+7}}v_x^3\Bigg\}.
\end{eqnarray*}
Integrating  $(\ref{2.43})$ w.r.t. $t$ and $x$  over $[0,T]\times\mathbb{R}^+$, and noticing that $v_{tx}(t,0)=0$ for all $t\in[0,T]$, and applying  the Cauchy inequality, we obtain
\begin{eqnarray}\label{2.45}
&&\int_0^t\int_{\mathbb{R}^+}\frac{r^{2m}}{v^{\beta+5}}v_{xxx}^2dxd\tau\notag\\
\leq&&C_{16}\left(\|(u,r^mv_{xx},\frac{(r^mu)_x}{r^m})(t)\|^2+\|(u_0,r^mv_{0xx},\frac{(r^mu)_{0x}}{r^m})\|^2\right)+\int_0^t\int_{\mathbb{R}^+}H_4dxd\tau.
\end{eqnarray}
Furthermore, by applying the Sobolev inequality, Cauchy inequality, Young inequality, Lemma \ref{L2.6} and Corollary \ref{C2.1}, we have
\begin{align}\label{2.46}
&\int_0^t\int_{\mathbb{R}^+}H_4dxd\tau\\
\leq&\frac{1}{2}\int_0^t\int_{\mathbb{R}^+}\frac{r^{2m}}{v^{\beta+5}}v_{xxx}^2dxd\tau+C_{17}\int_0^t\left\|\left(v_x,r^{2m}v_{xx},(r^mu)_x,(r^mu)_{xx}\right)(\tau)\right\|^2d\tau+C_{18}(T).\notag
\end{align}
Here $C_{16},C_{17}$ are  positive constants depending only on $\alpha, \beta , \gamma, \underline{V}, \overline{V}, \|v_0-1\|_1, \|u_0\|, \|\frac{(r^mu_0)_x}{r^m}\|$ and $\|v_{0x}\|_{1,r}$.  Then (\ref{2.43}) follows immediately by combining (\ref{2.45})-(\ref{2.46}),  and using Corollary $\ref{C2.1}$, Lemma \ref{L2.6} and the smallness of $\varepsilon$ such that $(C_{15}+C_{16})\varepsilon<\frac{1}{4}$. This completes the proof of Lemma \ref{L2.7}.

\paragraph{Proof of Proposition \ref{P2.1}.}
It follows from Corollary \ref{C2.1} and Lemmas \ref{L2.5}--\ref{L2.7} that there exists a positive constant $C_{19}(T)$ depending on $\alpha, \beta , \gamma, \underline{V}, \overline{V}, \|v_0-1\|_1, \|u_0\|, \|\frac{(r^mu_0)_x}{r^m}\|$ and $\|v_{0x}\|_{1,r}$ such that for $0 \leq t \leq T$,
\begin{align}
    &\|v(t)-1\|_1^2 + \|u(t)\|^2+ \|\frac{(r^mu)_x}{r^m}(t)\|^2+ \|v_x(t)\|_{1,r}^2 \notag\\
    &+\int_0^t \left( \|v_x(\tau)\|^2 + \|v_{xx}(\tau)\|_{1,r}^2+\|(r^mu)_x(\tau)\|^2+\|(r^mu)_{xx}(\tau)\|^2 \right) d\tau \\
    \leq& C_{19}(T)\left(\|v_0-1\|_1^2 + \|u_0\|^2+ \|\frac{(r^mu_0)_x}{r^m}\|^2+ \|r^mv_{0x}\|_{1,r}^2+1\right). \notag
\end{align}
So we get
\begin{align}\label{2.48}
    &\|v(t)-1\|_1^2 +\|u(t)\|_1^2+ \|v_x(t)\|_{1,r}^2+\int_0^t \left( \|v_x(\tau)\|^2 + \|v_{xx}(\tau)\|_{1,r}^2+\|u_x(\tau)\|_{1,r}^2\right) d\tau \\
    \leq& C_{20}(T)\left(\|v_0-1\|_1^2 +\|u_{0}\|_1^2+ \|v_{0x}\|_{1,r}^2+1\right), \notag
\end{align}
where $C_{20}(T)$ is a positive constant depending on $T, \alpha, \beta, \gamma, \underline{V}, \overline{V}, \|v_0-1\|_1, \|u_{0}\|_1$ and $\|v_{0x}\|_{1,r}$. Proposition \ref{P2.1} follows immediately from (\ref{2.48}).

\paragraph{Proof of Theorem \ref{T1.1}.}
~~~~Once the a priori estimates have been established, the existence of global strong or classical solutions can be obtained by a standard procedure. One first establishes the existence and uniqueness of a local strong or classical solution by means of a straightforward contraction argument and then uses the derived a priori estimates to continue this local solution globally in time. The proof of Theorem \ref{T1.1} is complete.\qed

\section{Proof of Theorem \ref{T1.2}.}
This section is devoted to proving Theorem \ref{T1.2}. First, when the viscosity coefficients $\mu(\rho)$, $\lambda(\rho)$, and the thermal conductivity coefficient $\kappa(\rho)$ satisfy \eqref{1.19}, system \eqref{1.15} becomes
\begin{equation}\label{3.1}
\left\{
\begin{aligned}
    &v_t  = \left(r^mu\right)_x, \\
    &u_t + r^m\left[p(v)\right]_x = r^m\left[ \frac{2\tilde{\mu}+\tilde{\lambda}}{v^{\alpha+1}}\left(r^m u\right)_x-\left(\frac{r^{2m}}{v^{\beta+5}}v_x\right)_x-\frac{\beta+5}{2}\frac{r^{2m}}{v^{\beta+6}}v_x^2 \right]_x+\frac{2m\tilde{\mu}\alpha}{v^{\alpha+1}}r^{m-1}uv_x\\
    &~~~~~~~~~~~~~~~~~~~~~~~~~-m\frac{r^{2m-1}}{v^{\beta+5}}v_x^2,
\end{aligned}
\right.
\end{equation}
Then, we define the following set of functions $X(0, T ; m, M )$ for which the solutions to the initial--boundary value problem \eqref{3.1}, (\ref{1.16})-(\ref{1.17}), are sought as follows:
\begin{eqnarray*}
X(0,T;m,M)=\left\{(v,u)(t,x)\left|
\begin{array}{c}
(v(t,x)-1)\in C(0, T; H^{1}(\mathbb{R}^+)\cap C^1(0, T; L^{2}(\mathbb{R}^+)),\\[2mm]
u(t,x)\in C(0, T; H^{1}(\mathbb{R}^+)\cap C^1(0, T; H^{1}(\mathbb{R}^+)),\\[2mm]
(v_x(t,x))\in C(0, T; H^{1}_r(\mathbb{R}^+),\\[2mm]
(u_x(t,x),v_{xx}(t,x))\in L^2(0, T; H^1_r(\mathbb{R}^+),\\[2mm]
\displaystyle m\leq v(t,x)\leq M, \,\,(t,x)\in [0,T]\times\mathbb{R}^+,
\end{array}
\right.\right\}
\end{eqnarray*}
with $M\geq m>0$ and $T>0$ are some positive constants.

Based on the assumptions of Theorem \ref{T1.2}, we first establish the following a priori estimates in the following proposition.

\begin{Proposition}[A priori estimates]\label{P3.1}
Assume that the conditions of Theorem \ref{T1.2} hold, and let $(v,u)(t,x)\in X(0,T;m,M)$ for some positive constants $m$, $M$, and $T$ be a solution to the initial-boundary value problem (\ref{1.3})--(\ref{1.4}). Then the following a priori estimates hold:
\begin{align}
&~~~~~~~~~~C_3^{-1}\le v(t,x) \le C_3,  \quad \forall (t,x) \in [0,T] \times \mathbb{R}^+,\label{1.23}\\
    &\|v(t)-1\|_1^2 +\|u(t)\|_1^2+ \|v_x(t)\|_{1,r}^2+\int_0^t \left( \|v_x(\tau)\|^2 + \|v_{xx}(\tau)\|_{1,r}^2+\|u_x(\tau)\|_{1,r}^2\right) d\tau \label{1.22}\\
    \leq& C_{4}(T)\left(\|v_0-1\|_1^2 +\|u_{0}\|_1^2+ \|v_{0x}\|_{1,r}^2+1\right), \notag
\end{align}
Here, $C_3$ is a positive constant depending only on $\alpha, \beta, \gamma, \underline{V}, \overline{V}, \|v_0-1\|, \|u_{0}\|$ and $\|v_{0x}\|_{0,r}$, and $C_4(T)$ is a positive constant depending only on $T, \alpha, \beta, \gamma, \underline{V}, \overline{V}, \|v_0-1\|_1, \|u_{0}\|_1$ and $\|v_{0x}\|_{1,r}$.
\end{Proposition}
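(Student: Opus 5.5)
We follow the scheme of Section 2 step by step, adapting each estimate to system \eqref{3.1}.

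\textbf{Basic energy estimate.} Multiply \eqref{3.1}$_1$ by $p(1)-p(v)$ and \eqref{3.1}$_2$ by $u$. The Korteweg terms combine exactly as in Lemma \ref{L2.1}. For the viscous part, the terms $\frac{2\tilde\mu+\tilde\lambda}{v^{\alpha+1}}[(r^mu)_x]^2$ and $\frac{2m\tilde\mu\alpha}{v^{\alpha+1}}r^{m-1}u^2v_x$ must be recombined. Writing $\mu(v)_x=-\alpha\tilde\mu v^{-\alpha-1}v_x$ and integrating by parts in $x$ (using $r_x=r^{-m}v$, $u(t,0)=0$), the cross term produces
\[
\int_{\mathbb{R}^+} \frac{2\tilde\mu}{v^{\alpha+1}}\Big(v^2\frac{m u^2}{r^2}\cdot\frac{m}{1}\Big)\,dx
\]
type contributions. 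The dissipation then becomes the full spherically symmetric form
\[
\frac{1}{v^{\alpha+1}}\Big[(2\tilde\mu+\tilde\lambda)[(r^mu)_x]^2-2m\tilde\mu\,\big(r^{m-1}u^2 v\big)_x\Big],
\]
which is nonnegative under \eqref{1.4}, since it equals $\lambda(\mathrm{div}\,u)^2+2\mu|Du|^2$ in Lagrangian form. This yields the analogue of \eqref{2.4}, with dissipation bounding $\int_0^t\int v^{-\alpha-1}[(r^mu)_x]^2$ and $\int_0^t\int v^{1-\alpha}u^2r^{-2}$.

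\textbf{Bounds on $v$.} Since the conserved energy still contains $\int r^{2m}v^{-\beta-5}v_x^2$, the proofs of Lemma \ref{L2.3} (via $\overline\Upsilon$, which gives case (i)) and Lemma \ref{L2.5} (via Kanel on unit intervals, which gives the upper bound for $\beta<-3$ and the lower bound for $\gamma>-\beta-2$) transfer verbatim. They yield $C_3$ independent of $T$ in every case (i)–(v).

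\textbf{Estimate of $\|v_x/v^{\alpha+1}\|$.} This is the main step. For \eqref{3.1}, $\frac{2\tilde\mu+\tilde\lambda}{v^{\alpha+1}}(r^mu)_x=-\frac{2\tilde\mu+\tilde\lambda}{\alpha}(v^{-\alpha})_t$ up to sign, so we use the BD-type substitution
\[
w=u+(2\tilde\mu+\tilde\lambda)\,r^m\frac{v_x}{v^{\alpha+1}}.
\]
Testing the equation for $w$ against $w$ (equivalently, multiplying \eqref{3.1}$_2$ by $r^{-m}v_x/v^{\alpha+1}$ as in \eqref{2.17}/\eqref{2.22}) gives the pressure dissipation $\gamma v_x^2/v^{\alpha+\gamma+2}$ and either $r^{2m}v_{xx}^2/v^{\alpha+\beta+6}$ or $[(r^m v^{-(\alpha+\beta+6)/2}v_x)_x]^2$. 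The leftover quartic term is $-f_1$ or $-f_1'$ times $r^{2m}v_x^4/v^{\alpha+\beta+8}$. These have the same sign conditions as before and produce the ranges in (iv), (v) and the left endpoint in (iii).

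The new error terms come from the $\mu$-dependent term, namely
\[
m\tilde\mu\,\frac{u}{v^\alpha r}\Big(\frac{v_x}{v^{\alpha+1}}\Big)_x-m\tilde\mu\,\frac{(r^mu)_xv_x}{r^{m+1}v^{2\alpha+1}}.
\]
We bound these by Cauchy–Schwarz against the second-order dissipation and the energy dissipation. Matching powers of $v$, using only $v\le C$ near degenerate ends, forces $\alpha\le\frac{\beta+4}{3}$. This gives case (iii); in case (ii), $\alpha=\frac{\beta+3}{2}$, the exponents match exactly. The terms $u^4$, $u^2v_x$ and $m(m-1)v_x^2/(r^2v^{\alpha+\beta+4})$ are handled as in \eqref{2.20}–\eqref{2.24}, using $r\ge a$, $\|u\|_{L^\infty}^2\le C\|u\|\|u_x\|$, and Gronwall's inequality. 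I expect the precise bookkeeping of the exponent inequalities here to be the delicate point.

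\textbf{Higher-order estimates.} With two-sided bounds on $v$ in hand, the analogues of Corollary \ref{C2.1}, Lemma \ref{L2.6} (test by $r^{-m}(r^mu)_{xx}$) and Lemma \ref{L2.7} (test by $r^{-m}v_{xxx}$) go through. The extra term $\frac{2m\tilde\mu\alpha}{v^{\alpha+1}}r^{m-1}uv_x$ is lower order and is absorbed using Sobolev and Gronwall. Combining all of these gives \eqref{1.22}, with $C_4(T)$ as stated.
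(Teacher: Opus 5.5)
Your proposal is correct at the paper's level of detail and follows its proof step for step. The steps are the energy identity with the $\mu_x$-term integrated by parts into the Lagrangian form of $\lambda(\mathrm{div}\,u)^2+2\mu|Du|^2$, the $\overline{\Upsilon}$ and unit-interval Kanel arguments for $T$-independent bounds on $v$, the multiplier $r^{-m}v_x/v^{\alpha+1}$ with sign conditions on the quartic coefficients and the restriction $\alpha\le\frac{\beta+4}{3}$ from the $\tilde\mu$-terms, and then the tests against $r^{-m}(r^mu)_{xx}$ and $r^{-m}v_{xxx}$; the correct dissipation integrand is $v^{-\alpha-1}\big[\tilde\lambda[(r^mu)_x]^2+2\tilde\mu\big(r^{2m}u_x^2+m u^2v^2r^{-2}\big)\big]$, not the expression you display, and the paper splits it as $(\tilde\lambda+\frac{2\tilde\mu}{m+1})v^{-\alpha-1}[(r^mu)_x]^2$ plus a nonnegative square. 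The only step you leave vague is case (ii), where at $\alpha=\frac{\beta+3}{2}$ both quartic coefficients are negative once $\beta<-\frac{19}{3}$, so the paper instead uses the dissipation $\frac{r^{2m}}{v^{\beta-\alpha+4}}\big[\big(\frac{v_x}{v^{\alpha+1}}\big)_x\big]^2$, whose remaining bad term carries the factor $\alpha-\frac{\beta+3}{2}$ and vanishes — although since your ordering already gives the two-sided bound on $v$, you could simply absorb the quartic term as in \eqref{2.20}.
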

For brevity, we assume throughout the following that the assumptions of Proposition \ref{P3.1} are satisfied. We begin with the following basic energy estimate.
\begin{Lemma}\label{L3.1}Assume that the conditions of Proposition 3.1 hold. Then, for $t\in[0,T]$, we have
\begin{equation}\label{3.4}
\begin{aligned}
&\int_{\mathbb{R}^+}\left(\Phi(v)+u^2
+\dfrac{r^{2m}}{v^{\beta+5}}v_x^2\right)\,dx  + \int_0^t \int_{\mathbb{R}^+}
\dfrac{\left[(r^m u)_x\right]^2}{v^{\alpha+1}}dxd\tau
\leq C_{21}.
\end{aligned}
\end{equation}
where the function $\Phi(v)$ is defined by
\begin{eqnarray}\label{3.5}
\Phi(v)=\int_{1}^v\left(p(1)-p(s)\right)ds,
\end{eqnarray}
with $C_{21}$ is a postive costant depending $\alpha, \beta, \gamma, \underline{V}, \overline{V}, \|v_0-1\|, \|u_{0}\|$ and $\|v_{0x}\|_{0,r}$.
\end{Lemma}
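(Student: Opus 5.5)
The plan mirrors the proof of Lemma \ref{L2.1}: derive a pointwise energy identity for \eqref{3.1} and integrate it over $[0,t]\times\mathbb{R}^+$. We multiply \eqref{3.1}$_1$ by $p(1)-p(v)$ and \eqref{3.1}$_2$ by $u$, and add the results.

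The pressure and transport terms give $\Phi(v)_t$ plus $-(r^m u\,(p(v)-p(1)))_x$. This uses $r^m p(v)_x u = (r^m u (p(v)-p(1)))_x - (p(v)-p(1))(r^m u)_x$ together with $v_t=(r^mu)_x$.

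For the viscous term we integrate by parts once:
\[
u\, r^m\Big[\tfrac{2\tilde\mu+\tilde\lambda}{v^{\alpha+1}}(r^mu)_x\Big]_x
= \Big(\tfrac{2\tilde\mu+\tilde\lambda}{v^{\alpha+1}} r^m u (r^mu)_x\Big)_x - \tfrac{2\tilde\mu+\tilde\lambda}{v^{\alpha+1}}[(r^mu)_x]^2 + \tfrac{2\tilde\mu+\tilde\lambda}{v^{\alpha+1}} m r^{m-1}v\,u\,(r^mu)_x .
\]
Here we used $(r^m)_x = m r^{m-1} r_x = m r^{-1} v$.

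The capillarity terms are handled exactly as in \eqref{2.6}--\eqref{2.7}. Their structure is identical to \eqref{2.1}, so they produce $\big(\tfrac{r^{2m}}{2v^{\beta+5}}v_x^2\big)_t$ plus an exact $x$-derivative. The extra term $-m r^{2m-1}v^{-\beta-5}v_x^2 u$ is absorbed when $r_t=u$ is used to differentiate $r^{2m}$ in time.

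The new ingredients are the two lower-order geometric terms coming from $\mu(v)=\tilde\mu v^{-\alpha}$. One is the boundary-type term $\tfrac{2\tilde\mu+\tilde\lambda}{v^{\alpha+1}} m r^{m-1} v u (r^mu)_x$ above. The other is $\tfrac{2m\tilde\mu\alpha}{v^{\alpha+1}} r^{m-1}u^2 v_x$.

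I would combine these with $-2m(\mu(v))_x r^{m-1}u^2$ written in divergence form. The goal is the classical identity: the full viscous dissipation for a spherically symmetric flow equals $\tfrac{2\tilde\mu+\tilde\lambda}{v^{\alpha+1}}[(r^mu)_x]^2 - 2m\tilde\mu\, (v^{-\alpha} r^{m-1}u^2)_x$. The reason is that $2\mu|D\mathbf u|^2+\lambda(\mathrm{div}\,\mathbf u)^2 = (2\mu+\lambda)(\mathrm{div}\,\mathbf u)^2 - 4\mu\,\mathrm{div}(\tfrac{u^2}{r}\cdot)$-type terms.

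Thus I expect
\[
\Big(\Phi(v)+\tfrac{u^2}{2}+\tfrac{r^{2m}}{2v^{\beta+5}}v_x^2\Big)_t + \tfrac{2\tilde\mu+\tilde\lambda}{v^{\alpha+1}}[(r^mu)_x]^2 = R_x ,
\]
with $R$ containing $-2m\tilde\mu v^{-\alpha} r^{m-1}u^2$.

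The main obstacle is checking that the cross terms really collapse to this exact derivative. This is delicate bookkeeping with $r_x=r^{-m}v$, and I would verify it term by term. If an indefinite remainder survives, the fallback is to use the coercive form $2\mu|D\mathbf u|^2+\lambda(\mathrm{div}\mathbf u)^2 \ge \tfrac{2\mu+d\lambda}{d}(\mathrm{div}\mathbf u)^2$ from \eqref{1.4}. This gives a positive lower bound on the dissipation by a multiple of $\tfrac{[(r^mu)_x]^2}{v^{\alpha+1}}$, which is why the statement is an inequality rather than an equality.

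Integrating over $[0,t]\times\mathbb{R}^+$ kills all flux terms. At $x=0$ this uses $u=v_x=0$ from \eqref{1.16}, and at infinity it uses the decay \eqref{1.17} (any $r^{m-1}u^2$ flux vanishes at $x=0$ since $u=0$). We obtain \eqref{3.4}, with $C_{21}$ equal to a constant times the initial energy. That initial energy is bounded by $\underline V,\overline V,\|v_0-1\|,\|u_0\|,\|v_{0x}\|_{0,r}$, using $\Phi(v_0)\le C(v_0-1)^2$ on $[\underline V,\overline V]$.
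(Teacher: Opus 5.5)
The identity you expect is false as soon as $\alpha\neq0$, and the bookkeeping behind it has two slips.

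First, $(r^mu)_x$ is already the derivative of $r^mu$. With $A=\frac{2\tilde\mu+\tilde\lambda}{v^{\alpha+1}}$, the product rule gives $u r^m\big[A(r^mu)_x\big]_x=\big(A\,r^mu\,(r^mu)_x\big)_x-A[(r^mu)_x]^2$. There is no additional term $A\,m r^{m-1}vu(r^mu)_x$.

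Second, $\frac{2m\tilde\mu\alpha}{v^{\alpha+1}}r^{m-1}u^2v_x$ is the term $-2m(\mu(v))_xr^{m-1}u^2$ itself, not a second term to combine with it. So the only viscous leftover in the energy balance is $2mr^{m-1}u^2(\tilde\mu v^{-\alpha})_x$ on the dissipation side, and
\[
2mr^{m-1}u^2\big(\tilde\mu v^{-\alpha}\big)_x=\Big(\frac{2m\tilde\mu\,r^{m-1}u^2}{v^{\alpha}}\Big)_x-\frac{2m\tilde\mu}{v^{\alpha}}\Big(2r^{m-1}uu_x+(m-1)\frac{vu^2}{r^2}\Big).
\]
The last term is neither an $x$-derivative nor of one sign.

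The collapse you have in mind is the constant-viscosity one. In Eulerian radial variables, $2\mu\big[(\mathrm{div}\,\mathbf u)^2-|D\mathbf u|^2\big]r^m=2m\mu\,(r^{m-1}u^2)_r$, which is a divergence only when $\mu$ is constant. That is why \eqref{2.6} has dissipation $(2\tilde\mu+\tilde\lambda v^{-\alpha})[(r^mu)_x]^2/v$, while \eqref{3.6} cannot have $A[(r^mu)_x]^2$ alone.

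What saves the proof is exactly your fallback, and that is the paper's argument. Keep the indefinite remainder together with $A[(r^mu)_x]^2$ and expand $(r^mu)_x=r^mu_x+\frac{mvu}{r}$, which follows from $r_x=r^{-m}v$. The $uu_x$ terms cancel and the dissipation becomes
\[
\frac{2\tilde\mu\big[(r^mu_x)^2+m\big(\frac{uv}{r}\big)^2\big]+\tilde\lambda[(r^mu)_x]^2}{v^{\alpha+1}}
=\Big(\tilde\lambda+\frac{2\tilde\mu}{m+1}\Big)\frac{[(r^mu)_x]^2}{v^{\alpha+1}}+\frac{2m\tilde\mu}{(m+1)v^{\alpha+1}}\Big(r^mu_x-\frac{uv}{r}\Big)^2 .
\]
This is $v\big(2\mu|D\mathbf u|^2+\lambda(\mathrm{div}\,\mathbf u)^2\big)$ in Lagrangian variables, and the flux picks up $-2m\tilde\mu v^{-\alpha}r^{m-1}u^2$, as you predicted. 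With $\tilde\mu>0$ and $2\tilde\mu+d\tilde\lambda>0$, $d=m+1$, the first coefficient equals $\frac{2\tilde\mu+d\tilde\lambda}{d}>0$ and the second term is nonnegative, which gives \eqref{3.4}.

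This is the paper's \eqref{3.6}. The paper writes $2m\tilde\mu$ where $\frac{2m\tilde\mu}{m+1}$ belongs, which does not affect positivity.

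So make this explicit Lagrangian computation your main line, rather than an appeal to the Eulerian inequality, and drop the claimed exact collapse. Your handling of the capillarity terms (including the $r_t=u$ cancellation), the vanishing of the fluxes, and the bound on the initial energy is correct.
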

\noindent{\bf Proof.}~~Multiplying $(\ref{3.1})_1$ by $(p(1)-p(v))$,  and  $(\ref{3.1})_2$ by $u$, and then combining the resultant equations, we have
\begin{equation}\label{3.6}
\begin{aligned}
&\left(\Phi(v)+\dfrac{u^2}{2}
+\dfrac{r^{2m}}{2v^{\beta+5}}v_x^2\right)_t+
\left(\tilde{\lambda}+\frac{2}{m+1}\tilde{\mu}\right)
\dfrac{\left[(r^m u)_x\right]^2}{v^{\alpha+1}}+\frac{2m\tilde{\mu}}{v^{\alpha+1}}\left(r^mu_x-\frac{uv}{r}\right)^2
= R_{6x},
\end{aligned}
\end{equation}
where
\begin{equation}\label{3.7}
\begin{aligned}
R_6
= {} & - r^m u\, p(v) + r^m u + \left( 2\tilde{\mu}+\tilde{\lambda}\right)
\frac{r^m u\, (r^m u)_x}{v^{\alpha+1}} -2m\tilde{\mu}\frac{r^{m-1}u^2}{v^\alpha}+ (r^mu)_x \frac{r^{2m}}{v^{\beta+5}} v_x \\
& - r^m u \left[
\left(\frac{r^{2m}}{v^{\beta+5}} v_x\right)_x
+ \frac{\beta+5}{2}\frac{r^{2m}}{v^{\beta+6}} v_x^2
\right].
\end{aligned}
\end{equation}
Integrating $(\ref{3.6})$ w.r.t. $t$ and $x$  over $[0,T]\times\mathbb{R}^+$,  and using the boundary condition $(\ref{1.16})$, we get immediately
\begin{equation*}
\begin{aligned}
&\int_{\mathbb{R}^+}\left(\Phi(v)+\dfrac{u^2}{2}
+\dfrac{r^{2m}}{2v^{\beta+5}}v_x^2\right)\,dx  + \int_0^t \int_{\mathbb{R}^+}
\Bigg\{\left(\tilde{\lambda}+\frac{2}{m+1}\tilde{\mu}\right)
\dfrac{\left[(r^m u)_x\right]^2}{v^{\alpha+1}}+\frac{2m\tilde{\mu}}{v^{\alpha+1}}\left(r^mu_x-\frac{uv}{r}\right)^2\Bigg\}dxd\tau
= \varepsilon_0.
\end{aligned}
\end{equation*}
where $\varepsilon_0=\int_{\mathbb{R}^+}\left(\Phi(v_0)+\dfrac{u_0^2}{2}
+\dfrac{r^{2m}}{2v_0^{\beta+5}}v_{0x}^2\right)\,dx$.

And from \eqref{1.4}, we have
\begin{equation*}
\tilde{\mu}>0,~~~~~2\tilde{\mu}+d\tilde{\lambda}>0.
\end{equation*}
Then we get immediately \eqref{3.4}. This completes the proof of Lemma \ref{L3.1}.

Based on Lemma \ref{L3.1}, and following the same arguments as in Section 2 (details omitted for brevity), we obtain the following two lemmas.
\begin{Lemma}\label{L3.2}There exists a uniform positive constant $c_1$, such that for all $\rho>0$,  it  holds
\begin{eqnarray}\label{3.8}
\rho\Phi(v)\geq c_1\Psi(v):=c_1\frac{(1-v)^2}{1+v}.
\end{eqnarray}
\end{Lemma}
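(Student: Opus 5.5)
The plan is to reduce the claim to the positivity of a single explicit ratio and then examine its behaviour at the three critical points $v\to0$, $v\to1$ and $v\to\infty$. Since $\rho=1/v$ and $p(v)=v^{-\gamma}$ with $\gamma\ge1$, we have $\Phi(v)=\int_1^v(1-s^{-\gamma})\,ds\ge0$, with equality only at $v=1$. For $v\neq1$ we set
\begin{equation*}
G(v):=\frac{\rho\Phi(v)}{\Psi(v)}=\frac{(1+v)\,\Phi(v)}{v\,(1-v)^2}.
\end{equation*}
The inequality \eqref{3.8} is then equivalent to $\inf G>0$. As in Lemma \ref{L2.2}, which \eqref{3.8} mirrors, this requires only the explicit form of $p$ and not the structure \eqref{1.19}. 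Since $G$ is continuous and positive on $(0,1)\cup(1,\infty)$, it suffices to control its limits.

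The three limits are computed as follows.

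\emph{Near $v=1$.} Taylor expansion gives $\Phi(v)=\tfrac{\gamma}{2}(v-1)^2+O(|v-1|^3)$, so $G(v)\to\gamma>0$. Hence $G$ extends continuously and positively to $v=1$.

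\emph{As $v\to0^+$.} For $\gamma>1$ we have $\Phi(v)\sim v^{1-\gamma}/(\gamma-1)$, and for $\gamma=1$ we have $\Phi(v)\sim\ln(1/v)$. In both cases $\rho\Phi(v)\to+\infty$ while $\Psi(v)\to1$, so $G\to+\infty$. Thus $G$ is bounded below by a positive constant on $(0,M]$ for every $M>0$, with the bound depending only on $\gamma$ and $M$.

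\emph{As $v\to\infty$.} Here the main obstacle appears. We have $\Phi(v)=v+O(v^{1-\gamma}+\ln v)$, so $\rho\Phi(v)\to1$, whereas $\Psi(v)\sim v$. Consequently $G(v)\sim1/v\to0$, and no constant $c_1$ can work uniformly on all of $(0,\infty)$.

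Because of this, I would prove \eqref{3.8} in the only form in which it can hold: for $0<v\le M$ with $c_1=c_1(\gamma,M)$, by the compactness and continuity argument above. I would then supply $M$ from an a priori upper bound. By Remark 2.1 (the analogue of Lemma \ref{L2.3} in the present section), $\beta\le-2$ already forces $v\le C$, with $C$ depending only on $\alpha$, $\beta$, $\gamma$, $\underline{V}$, $\overline{V}$, $\|v_0-1\|$, $\|u_0\|$ and $\|v_{0x}\|_{0,r}$. That upper bound is obtained from $\Phi\ge c\Psi$, the unweighted inequality of Lemma \ref{L2.2}, and from Lemma \ref{L3.1}. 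So there is no circularity: one first uses the unweighted bound to get $v\le C$, and then invokes the weighted bound on $(0,C]$, with $c_1$ depending on the same quantities as $C$.

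For all subsequent uses of Lemma \ref{L3.2} in Kanel's argument, only $\Phi\gtrsim\Psi$ is actually needed, and the unweighted inequality of Lemma \ref{L2.2} already gives it uniformly. I would state explicitly in the proof that the uniform version of \eqref{3.8} is false for large $v$, and that the lemma is to be read with $v$ restricted to a bounded range.
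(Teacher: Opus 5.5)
Your diagnosis is correct, and it is the main point. As $v\to\infty$ we have $\rho\Phi(v)=\Phi(v)/v\to1$ while $\Psi(v)\sim v$, so \eqref{3.8} cannot hold with one constant for all $\rho>0$. For instance, with $\gamma=2$ and $v=100$ one already needs $c_1\le 0.0101$.

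The paper gives no argument you could have matched. Lemma \ref{L3.2} is deferred to Section 2, where Lemma \ref{L2.2} is only quoted from a reference. The one place such a bound is used, \eqref{2.14} (and its unwritten analogue for Lemma \ref{L3.3}), needs $\Psi(v)\le C\Phi(v)$ without the factor $\rho$.

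One correction to your write-up. Lemma \ref{L2.2} is printed with the same factor, $\rho\Phi(v)\ge c_0\Psi(v)$. So it is not ``the unweighted inequality'', and it is false for exactly the same reason. You cannot cite it for $\Phi\ge c\Psi$, and your ``no circularity'' step currently rests on it.

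You have to prove the unweighted bound yourself, and your own limit analysis does this at once. The ratio $\Phi/\Psi=(1+v)\Phi(v)/(1-v)^2$ is continuous and positive on $(0,1)\cup(1,\infty)$. It tends to $\gamma$ at $v=1$, to $+\infty$ as $v\to0^+$, and to $1$ as $v\to\infty$. Hence $\Phi(v)\ge c(\gamma)\Psi(v)$ for all $v>0$.

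With that in hand, your restricted weighted version is immediate: $\rho\Phi(v)\ge \frac{c(\gamma)}{M}\Psi(v)$ on $(0,M]$. The detour through the a priori bound $v\le C$, which makes $c_1$ depend on the data, is therefore unnecessary.

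Note also that $\rho\Psi(v)=\frac{(1-\rho)^2}{1+\rho}$. So the unweighted inequality is equivalent to $\rho\Phi(v)\ge c\,\frac{(1-\rho)^2}{1+\rho}$. The printed statement is most likely this Eulerian form with $\Psi$ mistakenly written as a function of $v$.

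The right repair of the lemma is therefore the uniform bound $\Phi(v)\ge c_1\Psi(v)$ for all $v>0$, not a version restricted to bounded $v$.
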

\begin{Lemma}\label{L3.3}
If the parameters $\alpha$, $\beta$ and $\gamma$ satisfy $\alpha\in\mathbb{R},\,-3\leq\beta\leq-2,\,\gamma\ge1$, then there exists a positive constant
$C_{22}$ depending only on $\alpha, \beta, \gamma, \underline{V}, \overline{V}, \|v_0-1\|, \|u_0\|$ and $\|v_{0x}\|_{0,r}$, such that
\begin{eqnarray}\label{3.9}
C_{22}^{-1}\leq v(t,x)\leq C_{22}, \quad \forall \,(t,x) \in [0, T] \times \mathbb{R}^+.
\end{eqnarray}
\end{Lemma}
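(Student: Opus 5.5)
The plan is to repeat the Kanel'-type argument of Lemma \ref{L2.3}, now fed by the basic energy estimate of Lemma \ref{L3.1} instead of Lemma \ref{L2.1}. The density-dependent viscosity changes only the dissipation term. The energy part of \eqref{3.4},
\begin{equation*}
\sup_{t\in[0,T]}\int_{\mathbb{R}^+}\left(\Phi(v)+u^2+\frac{r^{2m}}{v^{\beta+5}}v_x^2\right)dx\leq C_{21},
\end{equation*}
has exactly the same form as before. Since $r\geq a>0$, we have $r^{2m}\geq a^{2m}$. Hence $\int_{\mathbb{R}^+}\frac{v_x^2}{v^{\beta+5}}dx\leq a^{-2m}C_{21}$, uniformly in $t\in[0,T]$. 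By Lemma \ref{L3.2} we also get $\int_{\mathbb{R}^+}\Psi(v)dx\leq C\int_{\mathbb{R}^+}\Phi(v)dx\leq C$, uniformly in time. The comparison constant between $\Phi$ and $\Psi$ must be taken uniform in $v$ here. The paper's statement has a stray $\rho$ in front of $\Phi$, but the standard bound $\Phi(v)\geq c\,\Psi(v)$ for $\gamma\geq1$ is what is needed, and we use it in that form.

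Next, define as before
\begin{equation*}
\overline{\Upsilon}(v)=\int_1^v\frac{\sqrt{\Psi(s)}}{s^{\frac{\beta+5}{2}}}\,ds.
\end{equation*}
The far-field condition \eqref{1.17}, propagated by $v-1\in C(0,T;H^1)$, gives $v(t,x)\to1$ as $x\to\infty$, so $\overline{\Upsilon}(v)(t,x)\to0$. Write $\overline{\Upsilon}(v)(t,x)=-\int_x^\infty\partial_y\overline{\Upsilon}(v)\,dy$ and apply Cauchy--Schwarz:
\begin{equation*}
|\overline{\Upsilon}(v)(t,x)|\leq\|\sqrt{\Psi(v)}(t)\|\,\left\|\frac{v_x}{v^{\frac{\beta+5}{2}}}(t)\right\|\leq C,
\end{equation*}
with $C$ depending only on $C_{21}$, $a$ and $m$.

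The remaining step, and the only place where the restriction $-3\leq\beta\leq-2$ enters, is the asymptotics \eqref{2.13} of $\overline{\Upsilon}$.

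As $v\to0^+$, $\sqrt{\Psi(s)}\sim1$, so the integrand behaves like $s^{-\frac{\beta+5}{2}}$. Since $\beta\geq-3$, the exponent satisfies $\frac{\beta+5}{2}\geq1$. The integral $\int_v^1 s^{-\frac{\beta+5}{2}}ds$ therefore diverges, like $v^{-\frac{\beta+3}{2}}$ if $\beta>-3$ and like $\ln\frac1v$ if $\beta=-3$.

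As $v\to\infty$, $\sqrt{\Psi(s)}\sim s^{1/2}$, so the integrand behaves like $s^{-\frac{\beta+4}{2}}$. Since $\beta\leq-2$, the exponent satisfies $\frac{\beta+4}{2}\leq1$. The integral $\int_1^v s^{-\frac{\beta+4}{2}}ds$ diverges, like $v^{-\frac{\beta+2}{2}}$ if $\beta<-2$ and like $\ln v$ if $\beta=-2$.

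Thus $|\overline{\Upsilon}(v)|\to\infty$ both as $v\to0^+$ and as $v\to\infty$. Together with the uniform bound on $|\overline{\Upsilon}(v)(t,x)|$, this yields $C_{22}^{-1}\leq v\leq C_{22}$. The constant depends only on $\alpha,\beta,\gamma,\underline V,\overline V,\|v_0-1\|,\|u_0\|,\|v_{0x}\|_{0,r}$, through $\varepsilon_0$ and $a$, and it is independent of $T$.

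I expect no genuine obstacle. The parameter $\alpha$ never enters, since the dissipation term in \eqref{3.4} is not used. The only care needed is the endpoint cases $\beta=-3$ and $\beta=-2$, where the divergence is logarithmic and must be recorded as such, and making sure that $\varepsilon_0$ is finite under \eqref{1.20}. The latter holds because $\underline V\leq v_0\leq\overline V$, $v_0-1\in H^1$, $u_0\in L^2$ and $v_{0x}\in L^2_r$.
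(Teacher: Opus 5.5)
Your proposal is correct and follows the paper's route. The paper proves Lemma~\ref{L3.3} by repeating the Kanel'-type argument of Lemma~\ref{L2.3} with the energy bound of Lemma~\ref{L3.1} in place of Lemma~\ref{L2.1}, bounding $|\overline{\Upsilon}(v)|$ by $\|\sqrt{\Psi(v)}\|\,\|v_x v^{-\frac{\beta+5}{2}}\|$ via $r\geq a$, and using the divergence of $\overline{\Upsilon}$ at $0$ and $\infty$ for $-3\leq\beta\leq-2$; your readings of Lemma~\ref{L3.2} as $\Phi(v)\geq c\,\Psi(v)$ (the version with the factor $\rho$ fails as $v\to\infty$) and of $\overline{\Upsilon}$ with integrand $\sqrt{\Psi(s)}\,s^{-\frac{\beta+5}{2}}$ are exactly what the paper's estimate \eqref{2.14} uses.
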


\begin{Remark}
From the proof of Lemma \ref{L2.3}, we see that if $\beta\leq-2$,  then $v(t,x)\leq C_{22}^{-1}$ for all $(t,x)\in[0,T]\times\mathbb{R}^+$, and if $\beta\geq-3$,  then $v(t,x)\geq C_{22}$ for all $(t,x)\in[0,T]\times\mathbb{R}^+$, where $C_{22}$ is a positive constant given in Lemma \ref{L3.3}.
\end{Remark}

Next, we will prove a lemma concerning the estimate of  $\int_{\mathbb{R}^+}\dfrac{v_x^2}{v^{2\alpha+2}}dx$.
\begin{Lemma}\label{L3.4}If the parameters $\alpha$ and $\beta$ satisfy \\[2mm]
(1) $-3\leq\beta\leq-2,\,\,\alpha\in\mathbb{R},$ or $\dfrac{-3-\sqrt{-2\beta^2-22\beta-59}}{2}\leq\alpha\leq\dfrac{-3+\sqrt{-2\beta^2-22\beta-59}}{2},\,\,\dfrac{-11-\sqrt{3}}{2}\leq\beta\leq\dfrac{-11+\sqrt{3}}{2}$, then for all $t\in[0,T]$, we have
\begin{eqnarray}\label{3.10}
\int_{\mathbb{R}^+}\dfrac{v_x^2}{v^{2\alpha+2}}dx+\int_0^t\int_{\mathbb{R}^+}\left(\frac{ v_x^2}{v^{\alpha+\gamma+2}}+\frac{r^{2m}}{v^{\alpha+\beta+6}}v_{xx}^2\right)dxd\tau\leq C_{23}(T);
\end{eqnarray}
(2) $\dfrac{\beta+2-\sqrt{-2(\beta+2)(\beta+5)}}{2}\leq\alpha\leq\dfrac{\beta+4}{3},\,\,-4\leq\beta<-3$, or $\dfrac{\beta+2-\sqrt{-2(\beta+2)(\beta+5)}}{2}\leq\alpha\leq\dfrac{\beta+2+\sqrt{-2(\beta+2)(\beta+5)}}{2},\,\,-5\leq\beta<-4$,
then it hold for $t\in[0,T]$ that
\begin{eqnarray}\label{3.11}
\int_{\mathbb{R}^+}\dfrac{v_x^2}{v^{2\alpha+2}}dx+\int_0^t\int_{\mathbb{R}^+}\left(\frac{ v_x^2}{v^{\alpha+\gamma+2}}+\left[\left(\frac{r^m}{v^{\frac{\alpha+\beta+6}{2}}}v_x\right)_x\right]^2\right)dxd\tau\leq C_{24}(T);
\end{eqnarray}
(3) $\alpha=\frac{\beta+3}{2},\beta\leq-3$, then it hold for $t\in[0,T]$ that
\begin{eqnarray}\label{3.12}
\int_{\mathbb{R}^+}\dfrac{v_x^2}{v^{2\alpha+2}}dx+\int_0^t\int_{\mathbb{R}^+}\left(\frac{ v_x^2}{v^{\alpha+\gamma+2}}+\frac{r^{2m}}{v^{\beta-\alpha+4}}\left[\left(\frac{v_x}{v^{\alpha+1}}\right)_x\right]^2\right)dxd\tau\leq C_{25}(T).
\end{eqnarray}
Here $C_{23}(T), C_{24}(T), C_{25}(T)$ are three positive constants depending only on $T, \alpha, \beta, \gamma, \underline{V}, \overline{V},\|v_0-1\|_1, \|u_0\|$ and $\|v_{0x}\|_{0,r}$.
\end{Lemma}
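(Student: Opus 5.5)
We follow the strategy of Lemma \ref{L2.4}, adapted to \eqref{3.1}. The starting point is again an identity for $\frac{v_x}{v^{\alpha+1}}$. Since here $\mu(v)$ and $\lambda(v)$ both scale like $v^{-\alpha}$, the viscous flux in \eqref{3.1}$_2$ can be written as
\[
\frac{2\tilde\mu+\tilde\lambda}{v^{\alpha+1}}(r^mu)_x=\frac{2\tilde\mu+\tilde\lambda}{v^{\alpha+1}}v_t=-\frac{2\tilde\mu+\tilde\lambda}{\alpha}\left(v^{-\alpha}\right)_t
\]
(for $\alpha\neq0$; for $\alpha=0$ use $(\ln v)_t$). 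Differentiating in $x$ gives $(2\tilde\mu+\tilde\lambda)\bigl(\frac{v_x}{v^{\alpha+1}}\bigr)_t$ up to the factor $r^m$. We therefore divide \eqref{3.1}$_2$ by $r^m$ and obtain an evolution equation for
\[
w:=(2\tilde\mu+\tilde\lambda)\frac{v_x}{v^{\alpha+1}}-\frac{u}{r^m}.
\]
Commuting $\partial_t$ with $r^{-m}$ produces the terms $m\frac{u^2}{r^{m+1}}$ and the drift $\frac{2m\tilde\mu\alpha}{v^{\alpha+1}}\frac{uv_x}{r}$. We multiply this equation by $\frac{v_x}{v^{\alpha+1}}$ (or by $w$) and integrate over $[0,t]\times\mathbb{R}^+$. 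The pressure then yields the good term $\gamma\frac{v_x^2}{v^{\alpha+\gamma+2}}$. The Korteweg term, after integration by parts (using $v_x(t,0)=0$), yields one of three coercive quantities, which is what distinguishes cases (1)--(3). The cross term $\frac{uv_x}{r^mv^{\alpha+1}}$ at times $0$ and $t$ is absorbed by Cauchy's inequality, as in \eqref{2.19}, using $\|u\|\le C$ from Lemma \ref{L3.1}.

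\emph{Case (1).} We expand $\bigl(\frac{r^{2m}}{v^{\beta+5}}v_x\bigr)_x\frac{v_x}{v^{\alpha+1}}$ exactly as in \eqref{2.17}. This gives the dissipation $\frac{r^{2m}}{v^{\alpha+\beta+6}}v_{xx}^2$ together with the quartic term $-f_1(\alpha,\beta)\frac{r^{2m}}{v^{\alpha+\beta+8}}v_x^4$ and lower-order geometric terms with factors $r^{m-1}$ and $r^{-2}$, which are harmless since $r\ge a$.
\begin{itemize}
\item For $-3\le\beta\le-2$, Lemma \ref{L3.3} gives two-sided bounds on $v$. We then bound $\int r^{2m}v_x^4$ by $\|v_x\|_{L^\infty}^2\|v_x\|_{0,r}^2$ with Sobolev, and close by Gronwall, as in \eqref{2.20}.
\item For the range with $f_1>0$ we have $\beta<-3$, so the Remark after Lemma \ref{L3.3} gives $v\le C$. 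The quartic term then has a good sign.
\end{itemize}
The new terms coming from the drift, $\frac{2m\tilde\mu\alpha}{v^{2\alpha+2}}\frac{uv_x^2}{r}$ and its companions, are bounded by $\|u\|_{L^\infty}\int\frac{v_x^2}{v^{2\alpha+2}}$. Since $\|u\|_{L^\infty}^2\le C\|u\|\,\|u_x\|$ and $\int_0^t\|u_x\|^2\,d\tau$ is controlled via \eqref{3.4} and the inequality $\|u_x\|\le C\|(r^mu)_x\|/r^m+C\|u\|$, these terms are Gronwall-admissible.

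\emph{Case (2).} We instead complete the square to produce $\bigl[\bigl(\frac{r^m}{v^{(\alpha+\beta+6)/2}}v_x\bigr)_x\bigr]^2$, as in \eqref{2.22}. The quartic coefficient is then $f'_1$, whose positivity gives exactly the stated $\alpha$-interval. The viscous cross term, the analogue of $-2\tilde\mu\frac{(r^mu)_x}{v^{(\alpha-\beta-2)/2}r^m}(\cdots)_x$, is here of the form $\frac{m\tilde\mu u}{v^\alpha r}\bigl(\frac{v_x}{v^{\alpha+1}}\bigr)_x$ together with $-m\tilde\mu\frac{(r^mu)_xv_x}{r^{m+1}v^{2\alpha+1}}$, as indicated in the remark comparing the two theorems. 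Splitting it via Young's inequality between $\bigl[\bigl(\frac{r^m}{v^{(\alpha+\beta+6)/2}}v_x\bigr)_x\bigr]^2$ and $\frac{[(r^mu)_x]^2}{v^{\alpha+1}}$ requires the power of $v$ left over to be nonnegative, so that $v\le C$ bounds it. This is where $\alpha\le\frac{\beta+4}{3}$ should appear for $-4\le\beta<-3$; for $-5\le\beta<-4$ this constraint is implied by the $f_1'$ interval.

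\emph{Case (3)}, $\alpha=\frac{\beta+3}{2}$. Here $\frac{r^{2m}}{v^{\beta+5}}v_x=\frac{r^{2m}}{v^{\beta-\alpha+4}}\cdot\frac{v_x}{v^{\alpha+1}}$. So the Korteweg term, tested against $\bigl(\frac{v_x}{v^{\alpha+1}}\bigr)$ after one integration by parts, is exactly the weighted square $\frac{r^{2m}}{v^{\beta-\alpha+4}}\bigl[\bigl(\frac{v_x}{v^{\alpha+1}}\bigr)_x\bigr]^2$, and no quartic term survives. This is why no sign condition on $f_1$ is needed. The remaining commutator terms contain $(r^{2m}v^{\alpha-\beta-4})_x=2mr^{2m-1}v^{\alpha-\beta-5}+(\alpha-\beta-4)r^{2m}v^{\alpha-\beta-5}v_x$; the second factor carries $v_x$. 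We absorb them with Young's inequality, using $v\le C$ and Gronwall on $\int\frac{v_x^2}{v^{2\alpha+2}}$.

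The main obstacle is the bookkeeping of the quartic and cross terms in case (2): we must verify that after Young's inequality every residual power of $v$ is nonnegative, so that the upper bound $v\le C$ suffices. No lower bound on $v$ is available yet when $\beta<-3$, and any negative power would break the argument.
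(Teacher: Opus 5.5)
Your overall route is the paper's: you test the momentum equation with $r^{-m}\frac{v_x}{v^{\alpha+1}}$ (your quantity $w$ is exactly the time-derivative part of the paper's identity) and treat the Korteweg term in three ways. Case (3), however, has a genuine gap. The identity $\frac{r^{2m}}{v^{\beta+5}}v_x=\frac{r^{2m}}{v^{\beta-\alpha+4}}\cdot\frac{v_x}{v^{\alpha+1}}$ holds for every $\alpha$, so as written your argument never uses $\alpha=\frac{\beta+3}{2}$. With $W=\frac{v_x}{v^{\alpha+1}}$, one integration by parts turns the Korteweg contribution into
\[
-\frac{r^{2m}}{v^{\beta-\alpha+4}}W_x^2-2m\frac{r^{m-1}v_x}{v^{\beta+4}}W_x+\Big[(\beta-\alpha+4)-\frac{\beta+5}{2}\Big]\frac{r^{2m}v_x^2}{v^{\beta+6}}W_x .
\]
Here $(\beta-\alpha+4)$ comes from differentiating the weight $v^{\alpha-\beta-4}$ (note $(r^{2m})_x=2mr^{m-1}v$). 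The term $-\frac{\beta+5}{2}$ comes from the second Korteweg piece $\frac{\beta+5}{2}\frac{r^{2m}}{v^{\beta+6}}v_x^2$, which your proposal never mentions. You propose to absorb the $v_x$-carrying commutator by Young's inequality. Doing so leaves $C\frac{r^{2m}v_x^4}{v^{\alpha+\beta+8}}$, which is exactly the unsigned quartic term. It cannot be controlled by $v\le C$ and Gronwall on $\int\frac{v_x^2}{v^{2\alpha+2}}$, and in case (3) there is no $f_1$-type good term to absorb it. What makes case (3) work is that the bracket equals $\frac{\beta+3}{2}-\alpha$ and vanishes identically. This is the term $-(\alpha-\frac{\beta+3}{2})\frac{r^{2m}}{v^{\beta+6}}v_x^2\big(\frac{v_x}{v^{\alpha+1}}\big)_x$ in the paper's $H_7$: a cancellation, not an absorption.

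A second gap concerns the drift term $2m\alpha\tilde\mu\frac{uv_x^2}{rv^{2\alpha+2}}$ and the commutator $m\frac{u^2v_x}{r^{m+1}v^{\alpha+1}}$, which you control through $\int_0^t\|u\|_{L^\infty}^2\,d\tau$. Estimate \eqref{3.4} only bounds $\int_0^t\!\int\frac{[(r^mu)_x]^2}{v^{\alpha+1}}$. This dominates $\int_0^t\|(r^mu)_x\|^2$ only if $v^{\alpha+1}\le C$, and with only $v\le C$ available (as you stress) that requires $\alpha\ge-1$. Your route is fine for $-3\le\beta\le-2$ or $\alpha\ge-1$, but $\alpha<-1$ occurs throughout the lemma:
\begin{itemize}
\item the second range of case (1) has $\alpha\le\frac{-3+\sqrt{3/2}}{2}<-0.88$, with $\alpha=-\frac32$ at the endpoints of the $\beta$-interval;
\item case (2) with $\beta=-4$ allows $\alpha\in[-2,0]$;
\item in case (3), $\alpha=\frac{\beta+3}{2}<-1$ once $\beta<-5$.
\end{itemize}
The paper avoids any $L^\infty$ bound on $u$ here. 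It writes
\[
m\frac{u^2v_x}{r^{m+1}v^{\alpha+1}}=-\frac{m}{\alpha}\Big(\frac{u^2}{r^{m+1}v^{\alpha}}\Big)_x+\frac{2m}{\alpha}\frac{u(r^mu)_x}{r^{2m+1}v^{\alpha}}-\frac{m(3m+1)}{\alpha}\frac{u^2}{r^{2m+2}v^{\alpha-1}},
\]
whose non-divergence terms are controlled by \eqref{3.4}, $\|u\|\le C$ and $v\le C$, since $\alpha<1$. In case (3) it integrates the drift by parts using $\alpha\frac{v_x}{v^{\alpha+1}}=-(v^{-\alpha})_x$. This produces $\frac{u}{rv^{\alpha}}\big(\frac{v_x}{v^{\alpha+1}}\big)_x$, $\frac{(r^mu)_xv_x}{r^{m+1}v^{2\alpha+1}}$ and $\frac{uv_x}{r^{m+2}v^{2\alpha}}$. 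These are absorbed by the weighted square and by \eqref{3.4}, up to a remainder $u^2v^{\beta+4-3\alpha}$, and this is where $\alpha\le\frac{\beta+4}{3}$ comes from. These are the terms the introductory remark refers to, so they are not a viscous cross term as you label them in case (2). In this model the viscous part is an exact time derivative, as your own first paragraph shows.
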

\noindent{\bf Proof:}~~Multiplying $(\ref{2.1})_1$ by $r^{-m}\frac{v_x}{v^{\alpha+1}}$, on the one hand, we obtain
\begin{eqnarray}\label{3.13}
\left(\left(2\tilde{\mu}+\tilde{\lambda}\right)\frac{v_x^2}{2v^{2\alpha+2}}-\frac{uv_x}{r^mv^{\alpha+1}}\right)_t+\gamma\dfrac{v_x^2}{v^{\alpha+\gamma+2}}+\frac{r^{2m}}{v^{\alpha+\beta+6}}v_{xx}^2=R_{7x}+H_5,
\end{eqnarray}
where
\begin{eqnarray*}
\begin{split}
R_7=&-\frac{uv_t}{r^mv^{\alpha+1}}+\left(\frac{r^{2m}}{v^{\beta+5}}\right)_x\frac{v_x}{v^{\alpha+1}}+\frac{\alpha+2\beta+11}{3}\frac{r^{2m}}{v^{\alpha+\beta+7}}v_x^3-m\frac{r^{m-1}}{v^{\alpha+\beta+5}}v_x^2,\\
H_5=&m(m-1)\frac{v_x^2}{v^{\alpha+\beta+4}r^2}+\frac{\left[(r^mu)_x\right]^2}{r^{2m}v^{\alpha+1}}-2m\frac{u(r^mu)_x}{r^{2m+1}v^\alpha}+m\frac{u^2v_x}{r^{m+1}v^{\alpha+1}}+2m\alpha\tilde{\mu}\frac{uv_x^2}{v^{2\alpha+2}r}\\
&-f_3(\alpha,\beta)\frac{r^{2m}}{v^{\alpha+\beta+8}}v_x^4-f_4(m,\alpha,\beta)\frac{r^{m-1}}{v^{\alpha+\beta+6}}v_x^3,
\end{split}
\end{eqnarray*}
with
\begin{eqnarray*}
\begin{split}
f_3(\alpha,\beta)=&\frac{(\beta+5)(\alpha+\beta+7)}{6}+\frac{(\alpha+1)(\beta+5)}{2}-\frac{(\alpha+\beta+6)(\alpha+\beta+7)}{3}\\
f_4(m,\alpha,\beta)=&-m(2\alpha+3)+\frac{2m(\alpha+\beta+6)}{3}+m(\alpha+\beta+5)-\frac{m(\beta+5)}{3}.
\end{split}
\end{eqnarray*}
Integrating $(\ref{3.13})$ w.r.t. $t$ and $x$  over $[0,T]\times\mathbb{R}^+$,  and using the boundary condition $(\ref{1.16})$, we get
\begin{eqnarray}\label{3.14}
\begin{split}
&\int_{\mathbb{R}^+}\left(2\tilde{\mu}+\tilde{\lambda}\right)\frac{v_x^2}{4v^{2\alpha+2}}dx+\int_0^t\int_{\mathbb{R}^+}\left(\frac{\gamma v_x^2}{v^{\alpha+\gamma+2}}+\frac{r^{2m}}{v^{\alpha+\beta+6}}v_{xx}^2\right)dxd\tau\\
\leq&\int_{\mathbb{R}^+}3(2\tilde{\mu}+\tilde{\lambda})\dfrac{v_{0x}^2}{4v_0^{2\alpha+2}}dx+C\int_{\mathbb{R}^+}\left(u_0^2+u^2\right)dx+\int_0^t\int_{\mathbb{R}^+}H_5dxd\tau,
\end{split}
\end{eqnarray}
where we have used the following Cauchy inequality:
\begin{eqnarray}\label{3.15}
\int_0^t\int_{\mathbb{R}^+}\left(\frac{uv_x}{r^mv^{\alpha+1}}\right)_tdxd\tau\leq\int_{\mathbb{R}^+}\left((2\tilde{\mu}+\tilde{\lambda})\dfrac{v_x^2}{4v^{2\alpha+2}}+(2\tilde{\mu}+\tilde{\lambda})\dfrac{v_{0x}^2}{4v_0^{2\alpha+2}}\right)dx+C\int_{\mathbb{R}^+}\left(u^2+u_0^2\right)dx.
\end{eqnarray}
Next, we estimate the nonlinear terms $\int_0^t\int_{\mathbb{R}^+}H_5dxd\tau$.

Case 1: $-3\leq\beta\leq-2,\,\,\alpha\in\mathbb{R}$, from Lemma \ref{L3.3}, we have $C_{5}^{-1}\leq v(t,x)\leq C_{5}$.
\begin{align}\label{3.16}
\int_0^t\int_{\mathbb{R}^+}H_5dxd\tau\leq&\frac{1}{8}\int_0^t\int_{\mathbb{R}^+}\left(\gamma\dfrac{ v_x^2}{v^{\alpha+\gamma+2}}+\frac{r^{2m}}{v^{\alpha+\beta+6}}v_{xx}^2\right)dxd\tau+C\int_0^t\int_{\mathbb{R}^+}\left(\frac{\left[(r^mu)_x\right]^2}{v^{\alpha+1}}+\frac{r^{2m}v_x^2}{v^{\beta+5}}+u^2\right)dxd\tau\notag\\
&+C\int_0^t\int_{\mathbb{R}^+}\left(u^4+r^{2m}v_x^4\right)dxd\tau\\
\leq&\frac{1}{8}\int_0^t\int_{\mathbb{R}^+}\left(\gamma\dfrac{ v_x^2}{v^{\alpha+\gamma+2}}+\frac{r^{2m}}{v^{\alpha+\beta+6}}v_{xx}^2\right)dxd\tau+C\int_0^t\int_{\mathbb{R}^+}\left(\frac{\left[(r^mu)_x\right]^2}{v^{\alpha+1}}+\frac{r^{2m}v_x^2}{v^{\beta+5}}+u^2\right)dxd\tau\notag\\
&+C\int_0^t\|\frac{r^mu}{r^m}(\tau)\|_{L^\infty}^2\int_{\mathbb{R}^+}u^2dxd\tau+C\int_0^t\|v_x(\tau)\|_{L^\infty}^2\int_{\mathbb{R}^+}r^{2m}v_x^2dxd\tau\notag\\
\leq&\frac{1}{4}\int_0^t\int_{\mathbb{R}^+}\left(\gamma\dfrac{ v_x^2}{v^{\alpha+\gamma+2}}+\frac{r^{2m}}{v^{\alpha+\beta+6}}v_{xx}^2\right)dxd\tau+C\int_0^t\int_{\mathbb{R}^+}\left(\frac{\left[(r^mu)_x\right]^2}{v^{\alpha+1}}+\frac{r^{2m}v_x^2}{v^{\beta+5}}+u^2\right)dxd\tau\notag.
\end{align}
Hence, we have \eqref{3.10} from Lemma $\ref{L3.1}$, combining \eqref{3.14}-\eqref{3.16}, and applying Gronwall's inequality.
\\[2mm]

Case 2: we let $f_3(\alpha,\beta)>0$, then we can get $\dfrac{-3-\sqrt{-2\beta^2-22\beta-59}}{2}\leq\alpha\leq\dfrac{-3+\sqrt{-2\beta^2-22\beta-59}}{2}$, where we need $\dfrac{-11-\sqrt{3}}{2}\leq\beta\leq\dfrac{-11+\sqrt{3}}{2}$. which implies $\alpha<1$, and we know $\beta<-3$, so from Remark 3.1, we have $v(t,x)\leq C_{22}$.

In particular, for this model, the basic energy estimate is not available when $\alpha \neq 0$. Therefore, for the term
\[
\int_0^t \int_{\mathbb{R}^+} m\frac{u^2 v_x}{r^{m+1} v^{\alpha+1}} \, dx d\tau,
\]
we rewrite it as
\begin{equation*}
m\frac{u^2 v_x}{r^{m+1} v^{\alpha+1}}
= -\frac{m}{\alpha}\left(\frac{u^2}{r^{m+1} v^\alpha}\right)_x
+ \frac{2m}{\alpha}\frac{u(r^m u)_x}{v^\alpha r^{2m+1}}
- \frac{m(3m+1)}{\alpha}\frac{u^2}{v^{\alpha-1} r^{2m+2}}.
\end{equation*}
So we have
\begin{align*}
\int_0^t \int_{\mathbb{R}^+} m\frac{u^2 v_x}{r^{m+1} v^{\alpha+1}} \, dx d\tau\leq& C\int_0^t \int_{\mathbb{R}^+}\frac{[(r^mu)_x]^2}{v^{\alpha+1}}dxd\tau+C\int_0^t \int_{\mathbb{R}^+}\frac{u^2}{v^{\alpha-1}}dxd\tau\\
\leq&C\int_0^t \int_{\mathbb{R}^+}\frac{[(r^mu)_x]^2}{v^{\alpha+1}}dxd\tau+C\int_0^t \int_{\mathbb{R}^+}u^2dxd\tau,
\end{align*}
And when $\alpha = 0$, it is clear that
\begin{align*}
\int_0^t \int_{\mathbb{R}^+} m\frac{u^2 v_x}{r^{m+1} v^{\alpha+1}} \, dx d\tau\leq& C\int_0^t \int_{\mathbb{R}^+}\frac{v_x^2}{v^{2\alpha+2}}dxd\tau+C\int_0^t \int_{\mathbb{R}^+}u^4dxd\tau\\
\leq&C\int_0^t \int_{\mathbb{R}^+}\frac{v_x^2}{v^{2\alpha+2}}dxd\tau+C\int_0^t\|\frac{r^mu}{r^m}(\tau)\|_{L^\infty} \int_{\mathbb{R}^+}u^2dxd\tau\\
\leq&C\int_0^t \int_{\mathbb{R}^+}\frac{v_x^2}{v^{2\alpha+2}}dxd\tau+C\int_0^t\int_{\mathbb{R}^+}\frac{[(r^mu)_x]^2}{v^{\alpha+1}}dxd\tau,
\end{align*}
above, we obtain
\begin{align}\label{3.17}
\int_0^t \int_{\mathbb{R}^+} m\frac{u^2 v_x}{r^{m+1} v^{\alpha+1}} \, dx d\tau\leq C \int_0^t\int_{\mathbb{R}^+}\left(\frac{v_x^2}{v^{2\alpha+2}}+\frac{[(r^mu)_x]^2}{v^{\alpha+1}}+u^2\right)dxd\tau.
\end{align}
Then, we have
\begin{align}\label{3.18}
\int_0^t\int_{\mathbb{R}^+}H_5dxd\tau\leq&\frac{1}{8}\int_0^t\int_{\mathbb{R}^+}\left(\gamma\dfrac{ v_x^2}{v^{\alpha+\gamma+2}}+\frac{r^{2m}}{v^{\alpha+\beta+6}}v_{xx}^2\right)dxd\tau-\frac{f_3(\alpha,\beta)}{2}\int_0^t\int_{\mathbb{R}^+}\frac{r^{2m}}{v^{\alpha+\beta+8}}v_x^4dxd\tau\notag\\
&+C\int_0^t\int_{\mathbb{R}^+}\left(\frac{u^2 v_x}{ v^{\alpha+1}} +\frac{\left[(r^mu)_x\right]^2}{v^{\alpha-\beta-2}}+\frac{\left[(r^mu)_x\right]^2}{v^{\alpha+1}}+\frac{r^{2m}v_x^2}{v^{\beta+5}}+\frac{v_x^2}{v^{2\alpha+2}}+u^2\right)dxd\tau\\
\leq&\frac{1}{8}\int_0^t\int_{\mathbb{R}^+}\left(\gamma\dfrac{ v_x^2}{v^{\alpha+\gamma+2}}+\frac{r^{2m}}{v^{\alpha+\beta+6}}v_{xx}^2\right)dxd\tau-\frac{f_3(\alpha,\beta)}{2}\int_0^t\int_{\mathbb{R}^+}\frac{r^{2m}}{v^{\alpha+\beta+8}}v_x^4dxd\tau\notag\\
&+C\int_0^t\int_{\mathbb{R}^+}\left(\frac{\left[(r^mu)_x\right]^2}{v^{\alpha+1}}+\frac{r^{2m}v_x^2}{v^{\beta+5}}+\frac{v_x^2}{v^{2\alpha+2}}+u^2\right)dxd\tau,\notag
\end{align}
where we need $\alpha\leq\frac{\beta+4}{3}$, then same as case 1 combining \eqref{3.15}, \eqref{3.17}-\eqref{3.18}, and applying Gronwall's inequality, when $\dfrac{-3-\sqrt{-2\beta^2-22\beta-59}}{2}\leq\alpha\leq\dfrac{-3+\sqrt{-2\beta^2-22\beta-59}}{2},\,\,\dfrac{-11-\sqrt{3}}{2}\leq\beta\leq\dfrac{-11+\sqrt{3}}{2}$, we can obtain \eqref{3.16}.

On the other hand, Multiplying $(\ref{3.1})_1$ by $r^{-m}\dfrac{v_x}{v^{\alpha+1}}$, we obtain
\begin{eqnarray}\label{3.19}
\left((2\tilde{\mu}+\tilde{\lambda})\dfrac{v_x^2}{2v^{2\alpha+2}}-\frac{uv_x}{r^mv^{\alpha+1}}\right)_t+\gamma\dfrac{v_x^2}{v^{\alpha+\gamma+2}}+\left[\left(\frac{r^m}{v^{\frac{\alpha+\beta+6}{2}}}v_x\right)_x\right]^2=R_{8x}+H_6,
\end{eqnarray}
where
\begin{align*}
R_8=&-\frac{uv_t}{r^mv^{\alpha+1}}+\left(\frac{r^{2m}}{v^{\beta+5}}\right)_x\frac{v_x}{v^{\alpha+1}}+\frac{\beta+5}{3}\frac{r^{2m}}{v^{\alpha+\beta+7}}v_x^3,\\
H_6=&m^2\frac{v_x^2}{v^{\alpha+\beta+4}r^2}+\frac{\left[(r^mu)_x\right]^2}{r^{2m}v^{\alpha+1}}-2m\frac{u(r^mu)_x}{r^{2m+1}v^\alpha}+m\frac{u^2v_x}{r^{m+1}v^{\alpha+1}}+2m\alpha\tilde{\mu}\frac{uv_x^2}{v^{2\alpha+2}r}\\
&-f'_3(\alpha,\beta)\frac{r^{2m}}{v^{\alpha+\beta+8}}v_x^4-f'_4(m,\alpha,\beta)\frac{r^{m-1}}{v^{\alpha+\beta+6}}v_x^3,
\end{align*}
with
\begin{eqnarray*}
\begin{split}
f'_3(\alpha,\beta)=&\frac{(\beta+5)(\alpha+\beta+7)}{6}+\frac{(\alpha+1)(\beta+5)}{2}-\left(\frac{\alpha+\beta+6}{2}\right)^2,\\
f'_4(m,\alpha,\beta)=&m(\alpha+\beta+5)-2m(\alpha+1)-\frac{m(\beta+5)}{3}.
\end{split}
\end{eqnarray*}
Integrating $(\ref{3.19})$ w.r.t. $t$ and $x$  over $[0,T]\times\mathbb{R}^+$,  and using the boundary condition $(\ref{1.16})$, we get the same as \eqref{2.19} using Cauhchy inequality
\begin{eqnarray}\label{3.20}
\begin{split}
&\int_{\mathbb{R}^+}\left(2\tilde{\mu}+\tilde{\lambda}\right)\frac{v_x^2}{4v^{2\alpha+2}}dx+\int_0^t\int_{\mathbb{R}^+}\left(\gamma\dfrac{v_x^2}{v^{\alpha+\gamma+2}}+\left[\left(\frac{r^m}{v^{\frac{\alpha+\beta+6}{2}}}v_x\right)_x\right]^2\right)dxd\tau\\
\leq&\int_{\mathbb{R}^+}3(2\tilde{\mu}+\tilde{\lambda})\dfrac{v_{0x}^2}{4v_0^{2\alpha+2}}dx+C\int_{\mathbb{R}^+}\left(u_0^2+u^2\right)dx+\int_0^t\int_{\mathbb{R}^+}H_6dxd\tau.
\end{split}
\end{eqnarray}
To deal with the $\int_0^t\int_{\mathbb{R}^+}H_6dxd\tau$, we have follows.

Case 3: we let
\begin{equation*}
f'_3(\alpha,\beta)=\frac{(\beta+5)(\alpha+\beta+7)}{6}+\frac{(\alpha+1)(\beta+5)}{2}-\left(\frac{\alpha+\beta+6}{2}\right)^2>0,
\end{equation*}
that is,
\begin{eqnarray*}
\dfrac{\beta+2-\sqrt{-2(\beta+2)(\beta+5)}}{2}<\alpha<\dfrac{\beta+2+\sqrt{-2(\beta+2)(\beta+5)}}{2},\,\,-5\leq\beta<-3
\end{eqnarray*}
then it follows  that
we know $\beta<-3$, so from Remark 3.1, we have $v(t,x)\leq C_{22}$.
\begin{align}\label{3.21}
\int_0^t\int_{\mathbb{R}^+}H_6dxd\tau\leq&\frac{1}{8}\int_0^t\int_{\mathbb{R}^+}\left(\gamma\dfrac{ v_x^2}{v^{\alpha+\gamma+2}}+\left[\left(\frac{r^m}{v^{\frac{\alpha+\beta+6}{2}}}v_x\right)_x\right]^2\right)dxd\tau-\frac{f'_3(\alpha,\beta)}{2}\int_0^t\int_{\mathbb{R}^+}\frac{r^{2m}}{v^{\alpha+\beta+8}}v_x^4dxd\tau\notag\\
&+C\int_0^t\int_{\mathbb{R}^+}\left(\frac{u^2 v_x}{ v^{\alpha+1}} +\frac{\left[(r^mu)_x\right]^2}{v^{\alpha-\beta-2}}+\frac{\left[(r^mu)_x\right]^2}{v^{\alpha+1}}+\frac{r^{2m}v_x^2}{v^{\beta+5}}+\frac{v_x^2}{v^{2\alpha+2}}+u^2\right)dxd\tau\\
\leq&\frac{1}{8}\int_0^t\int_{\mathbb{R}^+}\left(\gamma\dfrac{ v_x^2}{v^{\alpha+\gamma+2}}+\left[\left(\frac{r^m}{v^{\frac{\alpha+\beta+6}{2}}}v_x\right)_x\right]^2\right)dxd\tau-\frac{f'_3(\alpha,\beta)}{2}\int_0^t\int_{\mathbb{R}^+}\frac{r^{2m}}{v^{\alpha+\beta+8}}v_x^4dxd\tau\notag\\
&+C\int_0^t\int_{\mathbb{R}^+}\left(\frac{\left[(r^mu)_x\right]^2}{v^{\alpha+1}}+\frac{r^{2m}v_x^2}{v^{\beta+5}}+\frac{v_x^2}{v^{2\alpha+2}}+u^2\right)dxd\tau,\notag
\end{align}
where we need $\alpha\leq\dfrac{\beta+4}{3}$. Hence, we have \eqref{3.11} from Lemma $\ref{L3.1}$, combining \eqref{3.20}-\eqref{3.21}, and applying Gronwall's inequality.

The another case, we have the following:
\begin{align}\label{3.22}
    \left((2\tilde{\mu}+\tilde{\lambda})\frac{v_x^2}{2v^{2\alpha+2}}-\frac{uv_x}{r^mv^{\alpha+1}}\right)_t+\gamma\frac{v_x^2}{v^{\alpha+\gamma+2}}+\frac{r^{2m}}{v^{\beta-\alpha+4}}\left[\left(\frac{v_x}{v^{\alpha+1}}\right)_x\right]^2=R_{9x}+H_7,
\end{align}
where
\begin{align*}
R_9=&-\frac{uv_t}{r^mv^{\alpha+1}}+\left(\frac{r^{2m}}{v^{\beta+5}}v_x\right)_x\frac{v_x}{v^{\alpha+1}}+\frac{\beta+5}{2}\frac{r^{2m}}{v^{\alpha+\beta+7}}v_x^3+2m\tilde{\mu}\frac{uv_x}{v^{2\alpha+1}r}-\frac{m}{\alpha}\frac{u^2}{r^{m+1}v^\alpha}+\frac{m}{\beta-\alpha+3}\frac{r^{m-1}v_x^2}{v^{\alpha+\beta+5}},\\
H_7=&\frac{\left[(r^mu)_x\right]^2}{r^{2m}v^{\alpha+1}}-2m\frac{u(r^mu)_x}{r^{2m+1}v^\alpha}+\frac{2m}{\alpha}\frac{u(r^mu)_x}{r^{2m+1}v^\alpha}-\frac{m(3m+1)}{\alpha}\frac{u^2}{r^{2m+2}v^{\alpha-1}}-2m\frac{r^{m-1}v_x}{v^{\beta+4}}\left(\frac{v_x}{v^{\alpha+1}}\right)_x\\
&-\left(\alpha-\frac{\beta+3}{2}\right)\frac{r^{2m}}{v^{\beta+6}}v_x^2\left(\frac{v_x}{v^{\alpha+1}}\right)_x-2m\tilde{\mu}\frac{u}{v^\alpha r}\left(\frac{v_x}{v^{\alpha+1}}\right)_x-2m\tilde{\mu}\frac{(r^mu)_xv_x}{r^{m+1}v^{2\alpha+1}}+2m(m+1)\tilde{\mu}\frac{uv_x}{r^{m+2}v^{2\alpha}}\\
&+\frac{2m}{\beta-\alpha+3}\frac{r^{m-1}v_x}{v^{\beta+4}}\left(\frac{v_x}{v^{\alpha+1}}\right)_x+\frac{m(m-1)}{\beta-\alpha+3}\frac{v_x^2}{v^{\alpha+\beta+4}r^2}.
\end{align*}
Case 4: we need $\alpha=\dfrac{\beta+3}{2},\beta<-3$, then we obtain
\begin{align}\label{3.21}
\int_0^t\int_{\mathbb{R}^+}H_5dxd\tau\leq&\frac{1}{8}\int_0^t\int_{\mathbb{R}^+}\left(\gamma\dfrac{ v_x^2}{v^{\alpha+\gamma+2}}+\frac{r^{2m}}{v^{\beta-\alpha+4}}\left[\left(\frac{v_x}{v^{\alpha+1}}\right)_x\right]^2\right)dxd\tau\notag\\
&+C\int_0^t\int_{\mathbb{R}^+}\left(\frac{\left[(r^mu)_x\right]^2}{v^{\alpha+1}}+\frac{r^{2m}v_x^2}{v^{\beta+5}}+\frac{v_x^2}{v^{2\alpha+2}}+u^2\right)dxd\tau,\notag
\end{align}
the same we need $\alpha\leq\dfrac{\beta+4}{3}$, so we finish the case 4.

The proof of Lemma \ref{L3.4} is thus finished.

\begin{Lemma}\label{L3.5}If the parameters $\alpha$, $\beta$ and $\gamma$ satisfy  $\alpha\in\mathbb{R},\,-\gamma-2<\beta<-3,\,\gamma>1$, then there exists a positive constant
$C_{26}$ depending only on $\alpha, \beta, \gamma, \underline{V}, \overline{V}, \|v_0-1\|, \|u_0\|$ and $\|v_{0x}\|_{0,r}$ such that
\begin{eqnarray}\label{3.23}
C_{26}^{-1}\leq v(t,x)\leq C_{26}, \quad \forall\, (t, x) \in[0, T]\times\mathbb{R}^+.
\end{eqnarray}
\end{Lemma}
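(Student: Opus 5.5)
The plan is to transplant the proof of Lemma \ref{L2.5} verbatim to the density-dependent viscosity model. The only inputs used there were the basic energy bound, the upper bound on $v$ from Remark 2.1, and Kanel-type integration along unit intervals. All three are available here: Lemma \ref{L3.1} gives a uniform bound (constant $C_{21}$) on
\[
\int_{\mathbb{R}^+}\Phi(v)\,dx \quad\text{and}\quad \int_{\mathbb{R}^+}\frac{r^{2m}v_x^2}{v^{\beta+5}}\,dx .
\]
Since $r\ge a>0$, the second quantity controls $\int_{\mathbb{R}^+} v_x^2/v^{\beta+5}\,dx$. Since $\beta<-3$, the remark following Lemma \ref{L3.3} yields the time-independent upper bound $v\le C_{22}$. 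So only the lower bound remains to be proved.

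\textbf{Step 1: a point of controlled size in each unit interval.} Fix $t$ and an interval $[i,i+1]$, $i\in\mathbb{N}$. Because $\gamma>1$ (which follows from $-\gamma-2<\beta<-3$), we have
\[
\Phi(v)=\frac{v^{1-\gamma}}{\gamma-1}+v-\frac{\gamma}{\gamma-1}\ge 0,
\]
and $\Phi$ is convex. Jensen's inequality applied to $\int_i^{i+1}\Phi(v)\,dx\le C_{21}$ gives $\alpha_1\le \int_i^{i+1}v\,dx\le \alpha_2$, where $\alpha_1,\alpha_2>0$ are the roots of $\Phi=C_{21}$. The mean value theorem then supplies a point $a_i(t)\in[i,i+1]$ with $\alpha_1\le v(t,a_i(t))\le\alpha_2$. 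The same energy bound also gives $\int_i^{i+1}v^{1-\gamma}\,dx\le C$.

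\textbf{Step 2: Kanel-type estimate.} For $x\in[i,i+1]$, I would write
\[
\frac1v(t,x)-\frac1v(t,a_i(t))\le\int_i^{i+1}|v|^{\frac{\beta+1}{2}}\frac{|v_x|}{v^{\frac{\beta+5}{2}}}\,dx
\le C\Big(\int_i^{i+1}v^{1-\gamma}\,v^{\beta+\gamma}\,dx\Big)^{1/2}.
\]
I then split into two cases according to the sign of $\beta+\gamma$.
\begin{itemize}
\item If $\beta+\gamma\ge0$, the factor $v^{\beta+\gamma}$ is bounded using $v\le C_{22}$, so the right-hand side is at most a constant $C$. Hence $1/v\le C+1/\alpha_1$.
\item If $\beta+\gamma<0$, the right-hand side is bounded by $C\|1/v\|_{L^\infty}^{-(\beta+\gamma)/2}$. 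Since $\beta>-\gamma-2$, the exponent satisfies $0<-(\beta+\gamma)/2<1$. Taking the supremum over $x$ and $i$ gives $\|1/v\|_{L^\infty}\le C+C\|1/v\|_{L^\infty}^{\theta}$ with $\theta<1$. Young's inequality then absorbs the right-hand side and gives a uniform bound on $\|1/v\|_{L^\infty}$.
\end{itemize}
This sublinear absorption in the second case is the one delicate point. It is exactly where the hypothesis $\beta>-\gamma-2$ is used, and it is what makes the bound time-independent. Combining both cases with the upper bound and setting $C_{26}=\max\{C_{22},C_9',C_{10}'\}$, where $C_9',C_{10}'$ are the constants obtained in the two cases, gives \eqref{3.23}. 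All constants depend only on $\alpha,\beta,\gamma,\underline V,\overline V,\|v_0-1\|,\|u_0\|,\|v_{0x}\|_{0,r}$ through $C_{21}$.

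\textbf{Main obstacle.} I expect the main obstacle to be checking that the remark after Lemma \ref{L3.3} really gives the upper bound for $\beta<-3$ in this model. This needs the Kanel function $\overline\Upsilon$ to grow at $v\to\infty$ when $\beta<-2$, which it does since the integrand behaves like $s^{-(\beta+4)/2}$ with $-(\beta+4)/2>-1$. It also needs the energy bound of Lemma \ref{L3.1} to hold independently of $\alpha$. Lemma \ref{L3.1} provides this, since the viscous dissipation there is nonnegative by \eqref{1.4}.
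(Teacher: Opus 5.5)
Your proposal is correct and is essentially the paper's argument. The paper omits the proof of Lemma \ref{L3.5} and refers to Lemma \ref{L2.5}, and you reproduce that proof with Lemma \ref{L3.1} in place of Lemma \ref{L2.1}: Jensen on unit intervals to locate $a_i(t)$, the upper bound $v\le C_{22}$ from the Kanel-type remark since $\beta<-3$, and the H\"older bound on $\int_i^{i+1}|v_x|/v^2\,dx$ split by the sign of $\beta+\gamma$, with Young absorption when $\beta+\gamma<0$. You also make explicit two points the paper leaves implicit: the case $\beta+\gamma=0$ is covered, and $\beta>-\gamma-2$ is exactly what makes the absorption exponent $-(\beta+\gamma)/2$ smaller than $1$.
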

The proof is omitted here, as it follows along the same lines as that of Lemma \ref{L2.5}.

As a consequence of  Lemmas \ref{L3.1}, \ref{L3.3}-\ref{L3.5}, we can get the following corollary.

\begin{Corollary}\label{C3.1}
 It holds  that
 \begin{equation}\label{3.24}
C_3^{-1}\leq v(t,x)\leq C_3,~~\forall(t,x)\in[0,T]\times \mathbb{R}^+,
\end{equation}
\\[-5mm]
\begin{equation}\label{3.25}
\begin{split}
&\|(v,u,v_x)(t)\|^2+\|v_x(t)\|_{0,r}^2+\int_0^t\|\left((r^mu)_x,v_x\right)(\tau)\|^2d\tau+\int_0^t\|v_{xx}(\tau)\|_{0,r}^2d\tau\\
\leq& C_{27}(T)\left(\|v_0\|^2_1+\|u_0\|^2+\|v_{0x}(t)\|_{0,r}^2\right),\quad \forall\,t\in [0,T],
\end{split}
\end{equation}
where the constant $C_{3}(T)$ depending only on $\alpha, \beta, \gamma, \underline{V}, \overline{V}, \|v_0-1\|, \|r^mv_{0x}\|$ and $\|u_0\|$; $C_{27}(T)$ depending only on $T, \alpha, \beta, \gamma, \underline{V}, \overline{V}, \|v_0-1\|_1, \|r^mv_{0x}\|$ and $\|u_0\|$.
\end{Corollary}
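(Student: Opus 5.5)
The plan follows the proof of Corollary \ref{C2.1} in Section 2, with the Section 3 lemmas substituted for their Section 2 counterparts.

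\emph{Uniform bound \eqref{3.24}.} In case (i) of Theorem \ref{T1.2} ($-3\le\beta\le-2$) the bound is exactly Lemma \ref{L3.3}. In cases (ii)--(v) we have $\beta<-3$ and $\gamma>-\beta-2$, which is the same as $-\gamma-2<\beta$; also $\gamma>1$. So Lemma \ref{L3.5} applies and gives both bounds, and we take $C_3=\max\{C_{22},C_{26}\}$. This constant depends only on the basic energy $\varepsilon_0$ of Lemma \ref{L3.1}, on $\underline V,\overline V$ and on the parameters, so it is independent of $T$.

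\emph{Lower-order norms.} With $C_3^{-1}\le v\le C_3$ in hand, every power $v^{s}$ appearing in the weights is bounded above and below. Lemma \ref{L3.1} and Lemma \ref{L3.2} (through $\Psi(v)\sim (v-1)^2$ on the bounded range of $v$) then give
\[
\|(v-1,u)(t)\|^2+\|v_x(t)\|_{0,r}^2+\int_0^t\|(r^mu)_x\|^2d\tau\le C\bigl(\|v_0-1\|^2+\|u_0\|^2+\|v_{0x}\|_{0,r}^2\bigr),
\]
where the right-hand side comes from bounding $\varepsilon_0$ by the initial norms. Lemma \ref{L3.4} is applied in the subcase matching each case of Theorem \ref{T1.2}: (3) for (ii), (2) for (iii)--(iv), (1) for (v) and (i). It controls $\|v_x(t)\|^2$, $\int_0^t\|v_x\|^2d\tau$, and one of three dissipative quantities:
\[
\int_0^t\|v_{xx}\|_{0,r}^2d\tau,\qquad \int_0^t\Bigl\|\Bigl(\tfrac{r^m}{v^{(\alpha+\beta+6)/2}}v_x\Bigr)_x\Bigr\|^2d\tau,\qquad \int_0^t\int_{\mathbb{R}^+}r^{2m}\Bigl[\Bigl(\tfrac{v_x}{v^{\alpha+1}}\Bigr)_x\Bigr]^2dxd\tau .
\]

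\emph{Recovering $\int_0^t\|v_{xx}\|_{0,r}^2d\tau$.} This is the main step. In the first subcase it is immediate. In the second subcase we expand as in \eqref{2.36}: $r^{2m}v_{xx}^2$ equals $v^{\alpha+\beta+6}$ times the controlled square, plus error terms $r^{-2}v^2v_x^2$, $r^{2m}v_x^4$, $r^{m-1}v_xv_{xx}$ and $r^{2m}v_{xx}v_x^2$. In the third subcase we expand $(v_x/v^{\alpha+1})_x = v_{xx}/v^{\alpha+1}-(\alpha+1)v_x^2/v^{\alpha+2}$, which gives $r^{2m}v_{xx}^2\le C r^{2m}[(v_x/v^{\alpha+1})_x]^2+Cr^{2m}v_x^4$. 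The cross terms are absorbed by Cauchy's inequality. The remaining quartic term is handled with the one-dimensional Sobolev inequality $\|v_x\|_{L^\infty}^2\le C\|v_x\|\,\|v_{xx}\|$ together with $r\ge a$:
\[
\int_0^t\int_{\mathbb{R}^+} r^{2m}v_x^4\,dxd\tau\le \int_0^t\|v_x\|_{L^\infty}^2\|v_x\|_{0,r}^2d\tau\le \tfrac14\int_0^t\|v_{xx}\|_{0,r}^2d\tau+C\int_0^t\|v_x\|^2\|v_x\|_{0,r}^4d\tau .
\]
The last integral is bounded by $C(T)$ because $\|v_x\|_{0,r}$ is uniformly bounded. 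Absorbing the $\tfrac14$ term yields \eqref{3.25}, with $C_{27}(T)$ depending on the indicated quantities.

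The only delicate point I expect is the absorption step. There we need the weighted norm $\|v_{xx}\|_{0,r}$ (not merely $\|v_{xx}\|$) to dominate the $L^\infty$ interpolation, which holds since $r^{2m}\ge a^{2m}$.
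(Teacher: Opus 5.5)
Your proposal is correct and follows the paper's proof essentially step for step. The uniform bound comes from Lemmas \ref{L3.3} and \ref{L3.5} with $C_3=\max\{C_{22},C_{26}\}$, and the lower-order norms come from Lemmas \ref{L3.1} and \ref{L3.4}; $\int_0^t\|v_{xx}\|_{0,r}^2d\tau$ is then recovered through the expansions \eqref{3.27}--\eqref{3.28}, with the quartic term controlled by $\|v_x\|_{L^\infty}^2\int r^{2m}v_x^2dx$, the Sobolev inequality and absorption. Your explicit matching of the cases of Theorem \ref{T1.2} to the subcases of Lemma \ref{L3.4}, and your use of $v-1$ in place of $v$ in the norms, only make explicit what the paper leaves implicit or misprints.
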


\noindent{\bf Proof:}~~First,  (\ref{3.24}) follows immediately from Lemmas  \ref{L3.3}, \ref{L3.5} with $C_3=\max\{C_{22},C_{26}\}$.
For (\ref{3.25}), we get immediately from $(\ref{3.24})$ and Lemma \ref{L3.1}, \ref{L3.4} that
\begin{equation}\label{3.26}
\begin{split}
&\|(v,u,v_x)(t)\|^2+\|v_x(t)\|_{0,r}^2+\int_0^t\|\left((r^mu)_x,v_x\right)(\tau)\|^2d\tau\\
\leq& C_{28}(T)\left(\|v_0\|_1^2+\|u_0\|^2+\|v_{0x}\|_{0,r}^2\right),\quad \forall\,t\in [0,T].
\end{split}
\end{equation}
Moreover, we derive that
\begin{equation}\label{3.27}
\begin{split}
r^{2m}v_{xx}^2=&v^{\alpha+\beta+6}\left[\left(\frac{r^m}{v^{\frac{\alpha+\beta+6}{2}}}v_x\right)_x\right]^2-m^2\frac{v^2v_x^2}{r^2}-\left(\frac{\alpha+\beta+6}{2}\right)^2\frac{v_x^4r^{2m}}{v^2}-2mvr^{m-1}v_xv_{xx}\\
&+\frac{m(\alpha+\beta+6)}{2}r^{m-1}v_x^3+\frac{\alpha+\beta+6}{2}\frac{r^{2m}v_{xx}v_x^2}{v},
\end{split}
\end{equation}
and
\begin{equation}\label{3.28}
\begin{split}
r^{2m}v_{xx}^2=&v^{\alpha+\beta+6}\Bigg\{\frac{r^{2m}}{v^{\beta-\alpha+4}}\left[\left(\frac{v_x}{v^{\alpha+1}}\right)\right]^2+2(\alpha+1)\frac{r^{2m}}{v^{\alpha+\beta+7}}v_x^2v_{xx}-(\alpha+1)^2\frac{r^{2m}}{v^{\alpha+\beta+8}}v_x^4\Bigg\}.
\end{split}
\end{equation}
Then it follows from Lemma \ref{L3.4} that
\begin{align*}
&\int_0^t\|v_{xx}(\tau)\|_{0,r}^2d\tau\\
\leq&\frac{1}{4}\int_0^t\|v_{xx}(\tau)\|_{0,r}^2d\tau+C\int_0^t\int_{\mathbb{R}^+}\left(\left[\left(\frac{r^m}{v^{\frac{\alpha+\beta+6}{2}}}v_x\right)_x\right]^2+v_x^2+r^{2m}v_x^2+r^{2m}v_x^4\right)dxd\tau\\
\leq&\frac{1}{4}\int_0^t\|v_{xx}(\tau)\|_{0,r}^2d\tau+C(T)\left(\|v_0\|^2_1+\|u_0\|^2+\|v_{0x}\|_{0,r}^2\right)+\int_0^t\|v_x(\tau)\|_{L^{\infty}}^2\int_{\mathbb{R}^+}r^{2m}v_x^2dxd\tau\\
\leq&\frac{1}{2}\int_0^t\|v_{xx}(\tau)\|_{0,r}^2d\tau+C(T)\left(\|v_0\|^2_1+\|u_0\|^2+\|v_{0x}\|_{0,r}^2\right),
\end{align*}
which leads to
\begin{eqnarray}\label{3.29}
\int_0^t\|v_{xx}(\tau)\|_{0,r}^2d\tau\leq C_{29}(T)\left(\|v_0\|^2_1+\|u_0\|^2+\|v_{0x}\|_{0,r}^2\right).
\end{eqnarray}
Combining (\ref{3.26})-(\ref{3.29}) yields  $(\ref{3.25})$ immediately, The proof of Corollary \ref{C3.1} is thus finished.

Now, we carry out the higher order estimates  of solutions $(v,u)(t, x)$. The estimate for $\| (v_{xx}, u_x)(t)\|$ is provided in  the following lemma.
\begin{Lemma}\label{L3.6}
There exist a positive constant $C_{30}$ depending only on $T, \alpha, \beta , \gamma, \underline{V}, \overline{V}, \|v_0-1\|_1, \|u_0\|, \|\frac{(r^mu)_x}{r^m}\|$ and $\|v_{0x}\|_{1,r}$, such that  for $t\in[0, T]$,
\begin{align}\label{3.30}
&\left\|\frac{(r^mu)_x}{r^m}(\tau)\right\|^2+\|v_{xx}(\tau)\|_{0,r}+\int_0^t\left\|(r^mu)_{xx}(\tau)\right\|d\tau\notag\\
\leq &C_{30}(T)\left(\|v_0\|_1+\|u_0\|+\|\frac{(r^mu)_x}{r^m}\|+\|v_{0x}\|_{1,r}+1\right)+\varepsilon\int_0^t\int_{\mathbb{R}^+}\frac{r^{2m}}{v^{\beta+5}}v_{xxx}^2dxd\tau.
\end{align}
\end{Lemma}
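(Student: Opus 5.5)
We follow the proof of Lemma \ref{L2.6}, now for system \eqref{3.1}. The plan is to multiply \eqref{3.1}$_2$ by $r^{-m}(r^mu)_{xx}$ and use \eqref{3.1}$_1$, i.e. $v_{tx}=(r^mu)_{xx}$ and $(r^mu)_t=r^mu_t+mr^{m-1}vu^2\cdot r^{-m}\cdot r^m$ (the latter from $r_t=u$, $r_x=r^{-m}v$). This gives an identity of the form
\begin{equation*}
\left(\frac{r^{2m}}{2v^{\beta+5}}v_{xx}^2+\frac{[(r^mu)_x]^2}{2r^{2m}}\right)_t+\frac{2\tilde\mu+\tilde\lambda}{v^{\alpha+1}}[(r^mu)_{xx}]^2=R_x+H,
\end{equation*}
where $R$ is the analogue of $R_4$ and vanishes at $x=0$ by \eqref{4.40}. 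The remainder $H$ contains:
\begin{itemize}
\item the terms of $H_3$ with $2\tilde\mu+\tilde\lambda/v^\alpha$ replaced by $(2\tilde\mu+\tilde\lambda)v^{-\alpha}$ and $2\tilde\mu/v^2+\tilde\lambda(\alpha+1)/v^{\alpha+2}$ replaced by $(2\tilde\mu+\tilde\lambda)(\alpha+1)v^{-\alpha-2}$;
\item the new contributions from $\frac{2m\tilde\mu\alpha}{v^{\alpha+1}}r^{m-1}uv_x$, namely $\frac{2m\tilde\mu\alpha}{v^{\alpha+1}r}uv_x(r^mu)_{xx}$, together with the terms produced when the $x$-derivative falls on it in the commutator part.
\end{itemize}
Integrating over $[0,t]\times\mathbb{R}^+$ gives the analogue of \eqref{2.41}.

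Every coefficient is a power of $v$, and $C_3^{-1}\le v\le C_3$ by Corollary \ref{C3.1}, so all weights are harmless. The terms in $H$ are then handled as follows:
\begin{itemize}
\item Terms quadratic in $(r^mu)_{xx}$ times something small, or linear in $(r^mu)_{xx}$, are absorbed by Cauchy into $\frac18\int_0^t\int\frac{[(r^mu)_{xx}]^2}{v^{\alpha+1}}$. For instance the new term is bounded by $\eta\|(r^mu)_{xx}\|^2+C\|u\|_{L^\infty}^2\|v_x\|^2$, using $r\ge a$.
\item Terms containing $v_{xxx}$ (which come from expanding $\left(\frac{r^{2m}}{v^{\beta+5}}v_x\right)_{xx}$ times $\frac{v(r^mu)_x}{r^{m+1}}$) are bounded by $\varepsilon\int_0^t\int\frac{r^{2m}}{v^{\beta+5}}v_{xxx}^2+C_\varepsilon\int_0^t\|(r^mu)_x\|^2$.
\item The remaining terms are controlled with the Sobolev inequalities $\|f\|_{L^\infty}^2\le C\|f\|\|f_x\|$. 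This gives $\|u\|_{L^\infty}\le C\|u\|_1$, and in particular $\|r^mv_x\|_{L^\infty}^2\le C\|v_x\|_{0,r}(\|v_{xx}\|_{0,r}+\|v_x\|)$, where the extra $\|v_x\|$ comes from $(r^m)_x=mr^{-1}v$. Corollary \ref{C3.1} then supplies the needed bounds: $\sup_t\|(v-1,u,v_x)\|$, $\sup_t\|v_x\|_{0,r}$, and $\int_0^t(\|(r^mu)_x\|^2+\|v_{xx}\|_{0,r}^2)\,d\tau\le C(T)$.
\end{itemize}
After this, the right-hand side is bounded by $C(T)$ plus terms of the form
\begin{equation*}
C\int_0^t\big(\|(r^mu)_x\|^2+\|v_{xx}\|_{0,r}^2+1\big)\Big(\Big\|\frac{(r^mu)_x}{r^m}\Big\|^2+\|v_{xx}\|_{0,r}^2\Big)d\tau.
\end{equation*}
The prefactor is integrable in time by \eqref{3.25}, so Gronwall's inequality yields \eqref{3.30}. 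The initial quantities $\|v_{0xx}\|_{0,r}$ and $\|\frac{(r^mu_0)_x}{r^m}\|$ appear on the right.

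The main obstacle is the cubic and quartic Korteweg terms, e.g. $\frac{r^{2m}}{v^{\beta+5}}(r^mu)_xv_{xx}^2$ and $(r^mu)_{xx}\frac{r^{2m}}{v^{\beta+7}}v_x^3$. For these we would use the following bounds:
\begin{itemize}
\item For the first, $\|(r^mu)_x\|_{L^\infty}\le C\|(r^mu)_x\|^{1/2}\|(r^mu)_{xx}\|^{1/2}$. Young's inequality then gives $\eta\|(r^mu)_{xx}\|^2+C\|(r^mu)_x\|^{2/3}\|v_{xx}\|_{0,r}^{8/3}$.
\item For the second, we control $r^{2m}v_x^3$ in $L^2$ by $\|r^mv_x\|_{L^\infty}^2\|v_x\|_{0,r}$.
\end{itemize}
Provided the exponents are arranged so that the time-integrable factor multiplies only a power $\le1$ of the quantity being Gronwalled, plus lower-order terms, the argument closes. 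This is the same bookkeeping as in \eqref{2.42}, and we would mirror it term by term.
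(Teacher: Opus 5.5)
Your proposal is correct and follows the paper's route: multiply \eqref{3.1}$_2$ by $r^{-m}(r^mu)_{xx}$ to obtain \eqref{3.31}, estimate $H_8$ term by term with the Cauchy, Sobolev and Young inequalities together with Corollary~\ref{C3.1}, keep the $v_{xxx}$ contribution as the $\varepsilon$-term, and close with Gronwall's inequality. Your interpolation of the cubic Korteweg term $\frac{r^{2m}}{v^{\beta+5}}(r^mu)_xv_{xx}^2$ gives a Gronwall factor that is integrable in time, which is more careful than the quartic $\|v_{xx}\|_{0,r}^4$ bound at the end of \eqref{2.42}; the stray factor $v$ in your formula for $(r^mu)_t$ (it should read $r^mu_t+mr^{m-1}u^2$) is harmless.
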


\noindent{\bf Proof:}~~Multiplying $(\ref{3.1})_2$ by $r^{-m}(r^mu)_{xx}$ and using $(\ref{3.1})_1$, we have
\begin{equation}\label{3.31}
\left(\frac{r^{2m}}{2v^{\beta+5}}v_{xx}^2+\frac{[(r^mu)_x]^2}{2r^{2m}}\right)_t+(2\tilde{\mu}+\tilde{\lambda})\frac{[(r^mu)_{xx}]^2}{v^{\alpha+1}}=R_{10x}+H_8,
\end{equation}
where
\begin{eqnarray*}
R_{10}=&&\frac{(r^mu)_t(r^mu)_x}{r^{2m}}+\frac{r^{2m}}{v^{\beta+5}}v_{xx}v_{tx},\\
H_8=&&-mu\frac{[(r^mu)_x]^2}{r^{2m+1}}+2m^2\frac{vu^2(r^mu)_x}{r^{2m+2}}-m\frac{u^2}{r^{m+1}}(r^mu)_{xx}-\gamma\frac{v_x}{v^{\gamma+1}}(r^mu)_{xx}\\
&&-\frac{2m\alpha\tilde{\mu}}{rv^{\alpha+1}}uv_x(r^mu)_x+(2\tilde{\mu}+\tilde{\lambda})(\alpha+1)\frac{(r^mu)_x(r^mu)_{xx}v_x}{v^{\alpha+2}}+m\frac{r^{2m-1}}{v^{\beta+5}}uv_{xx}^2-\frac{\beta+5}{2}\frac{r^{2m}}{v^{\beta+5}}(r^mu)_xv_{xx}^2\\
&&+2m\frac{v(r^mu)_x}{r^{m+1}}\Bigg\{\frac{2\tilde\mu+\tilde{\lambda}}{v^{\alpha+1}}(r^mu)_{xx}-\frac{(2\tilde\mu+\tilde{\lambda})(\alpha+1)}{v^{\alpha+2}}v_x(r^mu)_x-\frac{r^{2m}}{v^{\beta+5}}v_{xxx}-4m\frac{r^{m-1}}{v^{\beta+5}}v_{xx}\\
&&~~~~~~~~~~~~~~~~~~~~~~+2(\beta+5)\frac{r^{2m}}{v^{\beta+6}}v_xv_{xx}-2m(m-1)\frac{v_x}{r^2v^{\beta+3}}+3m(\beta+4)\frac{r^{m-1}}{v^{\beta+5}}v_x^2\\
&&~~~~~~~~~~~~~~~~~~~~~~-\frac{(\beta+5)(\beta+6)}{2}\frac{r^{2m}}{v^{\beta+7}}v_x^3+\gamma\frac{v_x}{v^{\gamma+1}}+\frac{2m\alpha\tilde{\mu}}{v^{\alpha+1}r}uv_x\Bigg\}\\
&&+(r^mu)_{xx}\Bigg\{2m\frac{r^{m-1}}{v^{\beta+4}}v_{xx}+2m(m-1)\frac{v_x}{r^2v^{\beta+3}}-3m(\beta+4)\frac{r^{m-1}}{v^{\beta+5}}v_x^2\\
&&~~~~~~~~~~~~~~~~~~-(\beta+5)\frac{r^{m-1}}{v^{\beta+4}}v_xv_{xx}+\frac{(\beta+5)(\beta+6)}{2}\frac{r^{2m}}{v^{\beta+7}}v_x^3\Bigg\}.
\end{eqnarray*}
The proof follows the same argument as that of Lemma \ref{L2.6} and is omitted for brevity.

Finally, we estimate the term  $\int_0^t\|v_{xxx}(\tau)\|_{0,r}^2d\tau$ .
\begin{Lemma}\label{L3.7}
It holds for $t\in[0, T]$ that
\begin{equation}\label{3.32}
\int^t_0\|v_{xxx}(\tau )\|_{0,r}^2d\tau\leq C_{31}(T)\left(\|v_0-1\|^2_1+\|u_0\|^2+\|\frac{(r^mu_0)_x}{r^m}\|^2+\|r^mv_{0x}\|_{1,r}^2+1\right),
\end{equation}
where the constant $C_{31}(T)$ depending only on $T, \alpha, \beta , \gamma, \underline{V}, \overline{V}, \|v_0-1\|_1, \|u_0\|, \|\frac{(r^mu_0)_x}{r^m}\|$ and $\|v_{0x}\|_{1,r}$.
\end{Lemma}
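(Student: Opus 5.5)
We mirror the proof of Lemma \ref{L2.7}, now for the density-dependent viscosity system \eqref{3.1}. Multiplying \eqref{3.1}$_2$ by $r^{-m}v_{xxx}$ and using \eqref{3.1}$_1$ to trade $u_t$ against a time derivative, we expect an identity of the form
\begin{equation*}
\frac{r^{2m}}{v^{\beta+5}}v_{xxx}^2+\left(2m\frac{uv}{r^{2m+1}}v_{xx}-\frac{(r^mu)_x}{r^{2m}}v_{xx}\right)_t=R_{11x}+H_9 .
\end{equation*}
Here $R_{11}=-\frac{u_t}{r^m}v_{xx}+\frac{u}{r^m}v_{tx}$ collects the flux terms. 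The remainder $H_9$ has the same structure as $H_4$, with the following replacements:
\begin{itemize}
\item the viscous coefficient $\frac{2\tilde\mu}{v}+\frac{\tilde\lambda}{v^{\alpha+1}}$ is replaced by $\frac{2\tilde\mu+\tilde\lambda}{v^{\alpha+1}}$;
\item its $x$-derivative produces $-(2\tilde\mu+\tilde\lambda)(\alpha+1)\frac{v_x(r^mu)_x}{v^{\alpha+2}}v_{xxx}$;
\item there is one additional term $-\frac{2m\alpha\tilde\mu}{v^{\alpha+1}r}uv_xv_{xxx}$ coming from the lower-order viscous term in \eqref{3.1}$_2$.
\end{itemize}

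We then integrate over $[0,t]\times\mathbb{R}^+$. The boundary terms vanish because $u(t,0)=0$ and $v_{tx}(t,0)=(r^mu)_{xx}(t,0)$. We check that the latter vanishes, or is handled exactly as in \eqref{4.40}, by evaluating \eqref{3.1}$_2$ at $x=0$ and using the boundary conditions. The time-derivative term is bounded at times $t$ and $0$ by $C\|(u,r^mv_{xx},\frac{(r^mu)_x}{r^m})\|^2$. This quantity is controlled pointwise in time by Corollary \ref{C3.1} and Lemma \ref{L3.6}. Lemma \ref{L3.6} still leaves the error $\varepsilon\int_0^t\int\frac{r^{2m}}{v^{\beta+5}}v_{xxx}^2$, which we absorb at the end.

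For $\int_0^t\int H_9$, we use the uniform bounds $C_3^{-1}\le v\le C_3$ from \eqref{3.24}, which make every power of $v$ harmless. We also use $r\ge a$, so that $r^{-1}\le a^{-1}$ and all negative powers of $r$ are bounded. The terms are then handled as follows.
\begin{itemize}
\item Each term linear in $v_{xxx}$ is estimated by Cauchy's inequality as $\frac{1}{8}\int\frac{r^{2m}}{v^{\beta+5}}v_{xxx}^2$ plus $C\int r^{-2m}(\cdots)^2$.
\item The quadratic pieces $(r^mu)_{xx}^2$, $v_x^2$ and $r^{2m}v_{xx}^2$ are integrable in time by Corollary \ref{C3.1} and Lemma \ref{L3.6}.
\item The nonlinear pieces are $v_x^2(r^mu)_x^2$, $u^2v_x^2$, $u^4$, $r^{2m}v_x^2v_{xx}^2$ and $r^{4m}v_x^6$. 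For these we use the Sobolev bounds $\|f\|_{L^\infty}^2\le C\|f\|\|f_x\|$ for $f=u,(r^mu)_x,r^mv_x$.
\end{itemize}
The positive powers of $r$ are the delicate point. We keep every weight attached to $v_x$ or $v_{xx}$ through the quantities $r^mv_x$ and $r^mv_{xx}$, whose $L^2$ norms are already bounded by \eqref{3.25} and \eqref{3.30}. For example,
\begin{equation*}
\|r^mv_x\|_{L^\infty}^2\le C\|r^mv_x\|\,\|(r^mv_x)_x\|\le C\|v_x\|_{0,r}\left(\|v_{xx}\|_{0,r}+\|v_x\|\right).
\end{equation*}
Here $(r^m)_x=mr^{-1}v$ is bounded. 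Each nonlinear integral then becomes a time-integrable factor times a quantity bounded in $L^\infty_t$, giving the contribution $C_{17}\int_0^t\|(v_x,r^mv_{xx},(r^mu)_x,(r^mu)_{xx})\|^2d\tau+C(T)$.

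The main obstacle is the new term $\frac{2m\alpha\tilde\mu}{v^{\alpha+1}r}uv_xv_{xxx}$, together with the cubic coupling $\frac{v_x(r^mu)_x}{v^{\alpha+2}}v_{xxx}$. For the former we first try Cauchy plus $\|u\|_{L^\infty}^2\le C\|u\|\|u_x\|$ to get the bound $C\int\|u\|_{L^\infty}^2v_x^2$. Here $u_x$ is recovered from $(r^mu)_x=r^mu_x+mr^{-1}vu$, and time integrability comes from $\int_0^t\|(r^mu)_x\|^2$. For the latter we bound it by $C\|(r^mu)_x\|_{L^\infty}^2\int v_x^2$, with $\int_0^t\|(r^mu)_x\|_{L^\infty}^2\le C\int_0^t\|(r^mu)_x\|\|(r^mu)_{xx}\|$ finite by Lemma \ref{L3.6}.

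Collecting the estimates, we obtain
\begin{equation*}
\frac12\int_0^t\int\frac{r^{2m}}{v^{\beta+5}}v_{xxx}^2\le C(T)(\cdots)+C\varepsilon\int_0^t\int\frac{r^{2m}}{v^{\beta+5}}v_{xxx}^2 .
\end{equation*}
Choosing $\varepsilon$ in Lemma \ref{L3.6} small absorbs the last term. Finally, the lower bound on $v$ converts $\int_0^t\int\frac{r^{2m}}{v^{\beta+5}}v_{xxx}^2$ into $\int_0^t\|v_{xxx}\|_{0,r}^2$, which gives \eqref{3.32}.
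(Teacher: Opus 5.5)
Your proposal is correct and follows the paper's proof essentially step by step. It uses the same multiplier $r^{-m}v_{xxx}$ on \eqref{3.1}$_2$, the same identity \eqref{3.33} with $H_9$ containing the extra term $\frac{2m\alpha\tilde\mu}{v^{\alpha+1}r}uv_xv_{xxx}$, the same Cauchy--Sobolev--Young treatment of $\int_0^t\int H_9$ via Corollary \ref{C3.1} and Lemma \ref{L3.6}, and the same absorption of the $\varepsilon$-term by taking $\varepsilon$ small. One small correction on the boundary term: $v_{tx}(t,0)=(r^mu)_{xx}(t,0)=0$ follows simply by differentiating $v_x(t,0)=0$ in time, whereas evaluating \eqref{3.1}$_2$ at $x=0$ only relates $(r^mu)_{xx}$, $v_{xx}$ and $v_{xxx}$ there and does not show it vanishes.
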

\noindent{\bf Proof:}~~multiplying \eqref{3.1} the equation by $r^{-m}v_{xxx}$, we have
\begin{equation}\label{3.33}
\frac{r^{2m}}{v^{\beta+5}}v_{xxx}^2+\left(2m\frac{uv}{r^{2m+1}}v_{xx}-\frac{(r^mu)_x}{r^{2m}}v_{xx}\right)_t=R_{11x}+H_9,
\end{equation}
where
\begin{eqnarray*}
R_{11}=&&-\left(\frac{u}{r^m}\right)v_{xx}+\frac{u}{r^m}v_{tx}\\
H_9=&&(r^mu)_{xx}\Bigg\{\frac{(r^mu)_{xx}}{r^{2m}}-4m\frac{(r^mu)_xv}{r^{3m+1}}-2m\frac{uv_x}{r^{2m+1}}+2m(3m+1)\frac{uv^2}{r^{3m+2}}\Bigg\}-m\frac{u^2}{r^{m+1}}v_{xxx},\\
&&+\gamma\frac{v_x}{v^{\gamma+1}}v_{xxx}-\frac{2m\alpha\tilde{\mu}}{v^{\alpha+1}r}uv_xv_{xxx}+\frac{2\tilde{\mu}+\tilde\lambda}{v^{\alpha+1}}(r^mu)_{xx}v_{xxx}-\frac{2\tilde\mu}{v^2}+\frac{(2\tilde{\mu}+\tilde\lambda)(\alpha+1)}{v^{\alpha+2}}v_x(r^mu)_xv_{xxx}\\
&&-v_{xxx}\Bigg\{4m\frac{r^{m-1}}{v^{\beta+4}}v_{xx}-2(\beta+5)\frac{r^{2m}}{v^{\beta+6}}v_xv_{xx}+2m(m-1)\frac{v_x}{v^{\beta+3}r^2}-3m(\beta+4)\frac{r^{m-1}}{v^{\beta+5}}v_x^2\\
&&~~~~~~~~~~~+\frac{(\beta+5)(\beta+6)}{2}\frac{r^{2m}}{v^{\beta+7}}v_x^3\Bigg\}.
\end{eqnarray*}
Integrating  $(\ref{3.33})$ w.r.t. $t$ and $x$  over $[0,T]\times\mathbb{R}^+$, and noticing that $v_{tx}(t,0)=0$ for all $t\in[0,T]$, and applying  the Cauchy inequality, we obtain
\begin{eqnarray}\label{3.34}
&&\int_0^t\int_{\mathbb{R}^+}\frac{r^{2m}}{v^{\beta+5}}v_{xxx}^2dxd\tau\notag\\
\leq&&C_{32}\left(\|(u,r^mv_{xx},\frac{(r^mu)_x}{r^m})(t)\|^2+\|(u_0,r^mv_{0xx},\frac{(r^mu)_{0x}}{r^m})\|^2\right)+\int_0^t\int_{\mathbb{R}^+}H_9dxd\tau.
\end{eqnarray}
Furthermore, by applying the Sobolev inequality, Cauchy inequality, Young inequality, Lemma \ref{L3.6} and Corollary \ref{C3.1}, we have
\begin{eqnarray}\label{3.35}
\begin{split}
&\int_0^t\int_{\mathbb{R}^+}H_4dxd\tau\notag\\
\leq&\frac{1}{2}\int_0^t\int_{\mathbb{R}^+}\frac{r^{2m}}{v^{\beta+5}}v_{xxx}^2dxd\tau+C_{33}\int_0^t\left\|\left(v_x,r^{2m}v_{xx},(r^mu)_x,(r^mu)_{xx}\right)(\tau)\right\|^2d\tau+C_{34}(T).
\end{split}
\end{eqnarray}
Here $C_{32}, C_{33}$ are  positive constants depending only on $\alpha, \beta , \gamma, \underline{V}, \overline{V}, \|v_0-1\|_1, \|u_0\|, \|\frac{(r^mu_0)_x}{r^m}\|, \|r^mv_{0x}\|$ and $\|r^mv_{0xx}\|$.  Then \eqref{3.32} follows immediately by combining (\ref{3.34})-(\ref{3.35}),  and using Corollary $\ref{C3.1}$, Lemma \ref{L3.6} and the smallness of $\varepsilon$ such that $(C_{32}+C_{33})\varepsilon<\frac{1}{4}$. This completes the proof of Lemma \ref{L3.7}.

The same as above section, Iit follows from Corollary \ref{C3.1} and Lemmas \ref{L3.5}--\ref{L3.7} we get
\begin{align}\label{3.36}
    &\|v(t)-1\|_1^2 +\|u(t)\|_1^2+ \|v_x(t)\|_{1,r}^2+\int_0^t \left( \|v_x(\tau)\|^2 + \|v_{xx}(\tau)\|_{1,r}^2+\|u_x(\tau)\|_{1,r}^2\right) d\tau \\
    \leq& C_{35}(T)\left(\|v_0-1\|_1^2 +\|u_{0}\|_1^2+ \|v_{0x}\|_{1,r}^2+1\right), \notag
\end{align}
where $C_{35}(T)$ is a positive constant depending on $T, \alpha, \beta, \gamma, \underline{V}, \overline{V}, \|v_0-1\|_1, \|u_{0}\|_1$ and $\|v_{0xx}\|_{1,r}$. Proposition \ref{P3.1} follows immediately from (\ref{3.36}).

\paragraph{Proof of Theorem \ref{T1.2}.}
~~~~With the a priori estimates at hand, the global well-posedness result follows from a standard continuation argument. More precisely, one first constructs a solution by applying a contraction mapping principle. The obtained a priori bounds then guarantee that this local solution can be extended step by step beyond any finite time interval, which yields the global existence. This completes the proof of Theorem \ref{T1.2}. \qed
\vspace{3cm}
\begin{center}
\section*{Acknowledgements}
\end{center}
The authors would like to thank the anonymous referees for many helpful suggestions which improved the presentation of this paper. The authors are also grateful to the authors of the references cited in this work, whose valuable contributions have provided important foundations and inspiration for the present research.

\vspace{0.8cm}

\section*{ \begin{center} Declarations
\end{center}}

\subsection*{Funding}
This work was supported by the National Natural Science Foundation of China (No. 12171001),
the Support Program for Outstanding Young Talents in Universities of Anhui Province (No. gxyqZD2022007),
and the Excellent University Research and Innovation Team in Anhui Province (No. 2024AH010002).

\subsection*{Author Contributions}
 Material preparation, analysis and writing of the manuscript were performed by Fanfan Jiang.
Supervision and revision were completed by Prof. Chen.
All authors have read and approved the final manuscript.

\subsection*{Data Availability Statement}
Data sharing is not applicable to this article as no datasets were generated or analyzed during the current study.

\subsection*{Conflict of Interest}
The authors declare that they have no conflict of interest.

\subsection*{Clinical Trial Number}
Clinical trial number: not applicable.

\end{document}